\title{Adjoint $L$-functions, congruence ideals, and Selmer groups over $\GL_n$}
\author{Ho Leung Fong} 
\email{hlfong1@sheffield.ac.uk}
\address{School of Mathematical and Physical Sciences, University of Sheffield}
\newtheorem{theorem}{Theorem}[section]
\theoremstyle{definition}
\newtheorem{definition}[theorem]{Definition}
\newtheorem{lemma}[theorem]{Lemma}
\newtheorem{proposition}[theorem]{Proposition}
\newtheorem{corollary}[theorem]{Corollary}
\newtheorem{defn/lemma}[theorem]{Definition/Lemma}
\newtheorem*{claim}{Claim}
\theoremstyle{remark}
\newtheorem{remark}[theorem]{Remark}
\newcommand{\GL}{\mathrm{GL}}
\newcommand{\A}{\mathbb{A}}
\newcommand{\Q}{\mathbb{Q}}
\newcommand{\R}{\mathbb{R}}
\newcommand{\C}{\mathbb{C}}
\newcommand{\Z}{\mathbb Z}
\newcommand{\m}{\mathfrak{m}}
\newcommand{\n}{\mathfrak{n}}
\newcommand{\p}{\mathfrak p}
\newcommand{\T}{\mathbb T}
\renewcommand{\O}{\mathcal O}
\renewcommand{\H}{\mathcal H}
\def\lf{\left\lfloor}   
\def\rf{\right\rfloor}
\DeclareMathOperator{\Gal}{Gal}
\DeclareMathOperator{\ad}{Ad}
\DeclareMathOperator{\Ad}{Ad}
\DeclareMathOperator{\Res}{Res}
\DeclareMathOperator{\Frob}{Frob}
\DeclareMathOperator{\Hom}{Hom}
\DeclareMathOperator{\Spec}{Spec}
\DeclareMathOperator{\End}{End}
\begin{document}
\begin{abstract}
    In this paper, we relate $L(1,\pi,\Ad^\circ)$ to the congruence ideals for cohomological cuspidal automorphic representations $\pi$ of $\GL_n$ over any number field. We then use this result to deduce relationships between the congruences of automorphic forms and adjoint $L$-functions. For CM and totally real fields, we apply the result to obtain a lower bound on the cardinality of certain Selmer groups in terms of $L(1,\pi,\Ad^\circ)$.
\end{abstract}
\maketitle
\tableofcontents
\section{Introduction}
In number theory, we study many different types of $L$-functions, such as $L$-functions of elliptic curves, Artin $L$-functions, and Dirichlet L-functions. A recurring theme is that the special values of these $L$-functions often encode deep arithmetic information. Some notable examples include the analytic class number formula, the Herbrand-Ribet theorem, and the Birch-Swinnerton-Dyer conjecture.

The focus of this paper is the adjoint $L$-function of an automorphic representation. Special values of adjoint $L$-functions frequently reflect congruences between automorphic forms, and these congruences can in turn be measured by the so-called congruence ideal. The relationship between adjoint $L$-values and congruence ideals has by now become a central theme in modern number theory; for a historical overview, we refer the reader to the introduction of \cite{Raghuram-Balasubramanyam}. Here we offer a complementary motivation.

In 1994, Andrew Wiles and Richard Taylor proved Fermat's last theorem by showing that every semistable elliptic curve is modular. The key is to establish a modularity lifting theorem, which in their setup boils down to showing that the universal deformation ring $R$ of some residual Galois representation is isomorphic to a suitable Hecke algebra $T$.
To do so, they used the Taylor-Wiles method to prove a modularity lifting theorem in the minimally ramified case \cite{Taylor-Wiles}. To extend the theorem to the non-minimal case,  Wiles \cite{wiles_modular_1995} used a numerical criterion for ring isomorphism. To apply the criterion involves studying the tangent space of $R$ on the one hand and examining an invariant called the congruence ideal $\eta_T$ of $T$ on the other hand.

It has since proved fruitful to study congruence ideals not only for Hecke algebras but also for various cohomology modules. In \cite{tu}, the congruence ideals associated with the cohomology of certain locally symmetric spaces are related to the value at $s=1$ of the adjoint $L$-function for $\GL_2$ over specific number fields. Using the work of \cite{Raghuram-Balasubramanyam}, we generalize these results to $\GL_n$ over an arbitrary number field.

Let $\pi$ be a regular algebraic cuspidal automorphic representation of $\GL_n(\mathbb A_F)$, where $F$ is a number field.
Our first main result is \Cref{thm: main}, which establishes that, under suitable assumptions, the (cohomological) congruence ideal of $\pi$ is generated by a special value of the normalised adjoint $L$-function $L^{alg}(1,\pi,\Ad^0,\epsilon)$.


Building on Theorem \ref{thm: main}, we derive consequences for congruences between automorphic forms. In \Cref{coro: congruence,coro: cuspidal congruence}, we show that under suitable assumptions, if the $p$-adic valuation of $L(1,\pi,\Ad^\circ,\epsilon)$ is positive, then there is an automorphic representation $\pi'\not\cong \pi$ whose Hecke eigensystem is congruent to that of $\pi$.

In the last subsection, we provide in \Cref{thm: selmer} a lower bound for the cardinality of a certain Selmer group in terms of special values of $L$-functions when $F$ is a $CM$ field. This can be viewed as partial progress on the Bloch-Kato conjecture.

\subsection{Strategy}
We briefly summarise the strategy of the paper, omitting certain technical details in order to emphasise the main ideas.

The starting point is to realise automorphic representations in cohomology and to work at the level of cohomology. By results of Borel and Franke \cite{Borel-regularization,Franke}, any regular algebraic cuspidal automorphic representation of $\GL_n(\mathbb A_F)$ contributes to the cohomology of the locally symmetric space $X_U$ associated with $\GL_n$. Poincar\'e duality furnishes a perfect pairing between the singular cohomology and the compactly supported cohomology of $X_U$ with $\Z_p$-coefficients. Under the additional assumption that the cohomology of the boundary of $X_U$ is $p$-torsion free, this pairing induces a perfect pairing on the inner cohomology of $X_U$.

By \Cref{lem: pp}, any such perfect pairing that is suitably compatible with the action of the Hecke algebra yields an explicit formula for the corresponding congruence ideal: the ideal is generated by the value of the pairing on appropriate $\Z_p$-generators.
In our setting, the relevant pairing is the cup product on cohomology, which, on the automorphic side, corresponds essentially to the Petersson inner product of two automorphic forms. Via the Rankin-Selberg method, this inner product can be expressed in terms of the adjoint $L$-function. This leads directly to a formula for the congruence ideal in terms of the value $L(1,\pi,\Ad^0,\epsilon)$ in \Cref{thm: main}.

Using the relationship between congruence ideals and congruences of automorphic representations (\Cref{lem:congruence}), we then obtain criteria for the existence of congruent automorphic representations in terms of adjoint $L$-values (\Cref{coro: congruence}). Finally, invoking the local-global compatibility results for Galois representations established in \cite{Caraiani-Newton}, together with standard techniques from Galois deformation theory, we derive in \cref{sec: selmer} (when $F$ is a CM or totally real field) a lower bound for the order of a certain Selmer group in terms of special values of adjoint $L$-functions.

\subsection{Notation}
For a number field $F$, we write $\mathbb A_F$ for the ring of adeles, $F_\infty:=F\otimes_\Q \R=\prod_{v\mid \infty} F_v,$ $$\mathbb A_F^\infty$$ the ring of finite adeles, and $\mathbb A_F^{\infty,S}$ for the ring of finite adeles without the components at $S$.
The contragredient of an automorphic representation of $\GL_n(\mathbb A_F)$ is denoted by $\tilde\pi$.

If $G$ is a locally profinite group and $U$ is an open compact subgroup of $G$, then we let 
\begin{align*}
    \mathcal H(G,U) &:=\{U\text{-biinvariant compactly supported functions }G\to \Z\}\\
    &\cong \Z[U\backslash G/U],
\end{align*}
where multiplication is given by convolution with respect to the Haar measure on $G$ which gives $U$ volume $1$.
If $F$ is a number field and $\O$ is the ring of integers of a finite extension of $\Q_p$, then $$\T^S:= \mathcal H(\GL_n(\A_F^{\infty,S}),\prod_{v\notin S\cup\{\infty\}}\GL_n(\O_{F_v}))\otimes_\Z \O.$$ (This of course depends on $F, \O$.)

We write $\Ad^0$ for the (trace zero) adjoint representation, which is the representation of $\GL_n(\C)$ on the $n\times n$ trace zero matrices by conjugation.

All $L$-functions include the $L$-factors at infinity unless otherwise stated.

If $F$ is a number field and $G=\GL_{n/F}$, then $K_\infty$ will mean the product of $A_G:=\R_{>0}$ \footnote{$A_G=\R_{>0}$ is embedded diagonally into the centre of $G(F_\infty).$}and the standard maximal compact subgroup of $G(F\otimes_\Q\R)$, so $$K_\infty\cong \R_{>0} \cdot (O_n(\R)^{r_1}\times U_n(\R)^{r_2}),$$ where $r_1,r_2$ are the number of real and complex places of $F$ respectively and $U_n(\R):=\{g\in \GL_n(\C): \bar g^T g=1\}$ is the unitary group.
Also, $\mathfrak g$ will be the Lie algebra of $\GL_n(F_\infty)$.

\section{Cohomology}
Fix a number field $F$ with $r_1$ real places and $r_2$ complex places for the entire section.
Readers not very familiar with the relationship between automorphic representations and cohomology are recommended to read the excellent papers \cite{Schwermer,Clozel}. 
\subsection{Cohomology and Hecke operators}\label{lss}
We shall mostly follow \cite[section 6]{KT} and \cite[section 2.1]{Hansen-eigenvarieties} to define the cohomology group and Hecke operators.
Most of the material is standard, but there are many different variations, so we think it is necessary to state clearly our conventions.

Let $G=\GL_n$.
For an open compact subgroup $U\subset G(\mathbb A_F^\infty)$, we define\footnote{We quotient out by $K_\infty^{\circ}$ rather than $K_\infty$ because we would like to realise cohomological automorphic representations in cohomology. If we instead quotient out by $K_\infty$, then some of these representations only appear in the relative Lie algebra after twisting by a character $\epsilon$ of $\widehat{K_\infty/K_\infty^\circ}$, but the fact that $\epsilon$ is not a character of $U\subset G(\A_F^\infty)$ makes it unclear how to define the inner cohomology with coefficient in $M_\lambda\otimes\epsilon$. See however \cite[Proposition 3.2.5, Theorem 4.3.5]{januszewski} for a possible way to do this using locally algebraic representations.}
\[X_U:=G(F)\backslash G(\mathbb A_F)/K_{\infty}^\circ U.\]
and 
\[X:= G(F_\infty)/K_{\infty}^\circ.\]
Note that $X_U=G(F)\backslash (X\times G(\mathbb A_F^\infty)/U).$

We call an element $g=(g_v)_v\in G(\A^\infty_F)$ \emph{neat} if $\cap_v \Gamma_v=\{1\}$, where $\Gamma_v\subset \bar\Q^\times$ is the torsion subgroup of the subgroup generated by the eigenvalues of $g_v$.
We call an open compact subgroup $K\subset G(\A^\infty_F)$ \emph{neat} if all of its elements are neat.
\begin{definition}\label{defn: good}
    We call $U\subset G(\mathbb A_F^\infty)$ a \emph{good subgroup} if it is a neat subgroup of the form $U=\prod_v U_v\subset\prod \GL_n(\mathcal O_{F_v})$.
\end{definition}

Let $U$ be a good subgroup and $M$ be a $\Z[U]$-module.
We define a locally constant sheaf $\mathcal L_M$ on $X_U$ as the sheaf of continuous sections of the map
\[G(F)\backslash (X\times G(\mathbb A_F^\infty)\times M)/U \to X_U\]
where $G(F)\times U$ acts on $X\times G(\mathbb A_F^\infty)\times M$ by $(\gamma,u)(x,g,m)=(\gamma x,\gamma g u^{-1},um)$ and $M$ is equipped with the discrete topology.
We define $$H^*(X_U,M):=H^*(X_U,\mathcal L_M)$$ to be the sheaf cohomology.

\begin{proposition}\cite[Proposition 6.2]{KT}
    If $U$ is a neat\footnote{In \cite{KT}, this is stated for good subgroups, but the proof actually works for all neat subgroups.} subgroup, then 
    \[H^*(X_U,M)\cong H^*(C_{\mathbb A}^{\bullet}(U,M)),\]
    where $$C_{\mathbb A}^\bullet(U,M):=\Hom_{G(F)\times U}(C_{{\mathbb A},\bullet}, M)$$ and $C_{{\mathbb A},\bullet}$ is the group of singular chains on $X\times G(\mathbb A_F^\infty)$ with $\Z$ coefficients.
\end{proposition}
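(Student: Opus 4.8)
The plan is to reduce the statement to two ingredients: (i) the fact, guaranteed by neatness, that $G(F)\times U$ acts freely on $X\times G(\mathbb A_F^\infty)$, and (ii) the standard comparison, for a free and properly discontinuous action, between the sheaf cohomology of the quotient and the equivariant singular cochain complex upstairs. First I would establish (i). A pair $(\gamma,u)$ fixing $(x,g)$ forces $u=g^{-1}\gamma g$ and $\gamma x=x$, so it is enough to show that a nontrivial $\gamma\in\Gamma_g:=G(F)\cap gUg^{-1}$ has no fixed point on $X=G(F_\infty)/K_\infty^\circ$. If it did, $\gamma$ would be $G(F_\infty)$-conjugate into $K_\infty^\circ=A_G\cdot K^\circ$, hence (after conjugation) equal to $zk$ with $z\in A_G=\R_{>0}$ the same scalar at every archimedean place and $k$ in the maximal compact; so every eigenvalue of $\gamma$ has archimedean absolute value $z$ and $|\det\gamma|_v=z^n$ for all $v\mid\infty$. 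Since $g^{-1}\gamma g\in U\subset\prod_v\GL_n(\O_{F_v})$, the characteristic polynomial of $\gamma$ lies in $\O_F[X]$ and $\det\gamma\in\O_F^\times$, so the product formula forces $z=1$; the eigenvalues of $\gamma$ are then algebraic integers all of whose Galois conjugates lie on the unit circle, hence roots of unity by Kronecker's theorem. As $\gamma$ is also semisimple it has finite order, and neatness forces $\gamma=1$. (Proper discontinuity of the $G(F)$-action on $X\times G(\mathbb A_F^\infty)$ is the usual fact about arithmetic groups.) In particular $X_U$ is a manifold whose components are aspherical, the components of $X$ being contractible.

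With (i) in hand the rest is formal. Since singular simplices are connected and $G(\mathbb A_F^\infty)$ is totally disconnected, $C_{\mathbb A,\bullet}=\bigoplus_{g\in G(\mathbb A_F^\infty)}C^{\mathrm{sing}}_\bullet(X)$, and freeness of the action on simplices makes each $C_{\mathbb A,n}$ a free $\Z[G(F)\times U]$-module, while contractibility of the components of $X$ makes the augmentation $C_{\mathbb A,\bullet}\to\Z[\pi_0(X\times G(\mathbb A_F^\infty))]$ a resolution. Decomposing $G(\mathbb A_F^\infty)$ into $G(F)\times U$-orbits, i.e.\ into double cosets with representatives $g_i$, and applying the Eckmann--Shapiro lemma, I would identify $\Hom_{\Z[G(F)\times U]}(C_{\mathbb A,\bullet},M)$ with $\prod_i\Hom_{\Z[\Gamma_{g_i}]}(C^{\mathrm{sing}}_\bullet(X),M_i)$, where $M_i$ is $M$ with $\Gamma_{g_i}$ acting through $\gamma\mapsto g_i^{-1}\gamma g_i\in U$. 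Since $X\to\Gamma_{g_i}\backslash X$ is the quotient by the free, properly discontinuous action of $\Gamma_{g_i}$, the $i$-th factor is the complex of singular cochains of $\Gamma_{g_i}\backslash X$ valued in the local system attached to $M_i$; passing to cohomology, using the standard identification of the cohomology of a locally constant sheaf on a manifold with singular cohomology with the corresponding local coefficients, and matching $X_U=\bigsqcup_i\Gamma_{g_i}\backslash X$ together with $\mathcal L_M|_{\Gamma_{g_i}\backslash X}\cong\mathcal L_{M_i}$, gives $H^*(C_{\mathbb A}^\bullet(U,M))\cong H^*(X_U,\mathcal L_M)$.

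The step I expect to be the real obstacle is the first one: extracting genuine freeness of the action from the combinatorial neatness condition. For $\GL_n$ with $A_G=\R_{>0}$ the group $K_\infty^\circ$ is not compact, so one cannot simply invoke ``a torsion-free discrete group acting on $G/K$ with $K$ compact acts freely''; instead one must exclude infinite-order elements with fixed points, and that is exactly what the product formula and Kronecker's theorem accomplish. This is also the only point at which one checks that the argument of \cite{KT}, written there for good subgroups, goes through verbatim for all neat $U$. Everything else is bookkeeping: tracking the conjugation twist $M\mapsto M_i$, and checking that the decompositions of $X_U$ into connected components and of $\mathcal L_M$ into local systems on them line up with the orbit decomposition of $G(\mathbb A_F^\infty)$; this is routine and insensitive to the coefficients and to whether $X$ is connected.
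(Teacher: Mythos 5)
Your argument is correct and is essentially the proof given in the cited reference \cite[Proposition 6.2]{KT} (the paper itself only cites this result, with a footnote that the argument extends from good to neat subgroups): freeness of the $G(F)\times U$-action via the product formula and Kronecker's theorem, followed by the double-coset/Shapiro decomposition and the comparison of equivariant singular cochains with sheaf cohomology of the local system. Your fixed-point analysis on $X=G(F_\infty)/K_\infty^\circ$ is exactly the point that makes the extension to arbitrary neat $U$ work, so nothing further is needed.
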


To define the action of the Hecke algebra, we suppose $M$ is actually a left $\Z[\Delta]$-module, where $\Delta\subset G(\A_F^\infty)$ is a submonoid containing $U$.
Note that the compactness of $U$ implies that $U\subset\Delta$ is a Hecke pair.
Let $\mathcal H(\Delta,U)$ be the set of locally constant, compactly supported functions $f:\Delta\to \Z$ which are $U$-biinvariant.
We can (and will) regard it as a subalgebra of $\mathcal H(G(\A^\infty_F),U)$.

For $\delta\in \Delta$,  let the characteristic function $[U\delta U]$ acts on the complex $C_{\mathbb A}^\bullet(U,M)$ by 
\[([U\delta U]^* \phi)(\sigma)=\sum \delta_i\phi(\delta_i^{-1}\sigma)\]
where $U \delta U=\bigsqcup_i \delta_i U$, $\phi\in C_{\mathbb A}^\bullet(U,M)$ and $\sigma\in C_{{\mathbb A},\bullet}.$
This is independent of the choices of $\delta_i$.
By taking cohomology, we get an action of $\mathcal H(\Delta,U)$ on $H^*(X_U,M)$.

Let $T_n$ and $B_n$ be the standard diagonal torus and Borel subgroup of $\GL_{n/\Z}$.
Let $w_0$ be the longest element of the Weyl group\footnote{If we write the characters of $T_n$ as $\lambda=(\lambda_1,\dots,\lambda_n)$, then $w_0\lambda=(\lambda_n,\dots,\lambda_1)$}.
\begin{definition}\cite[Definition 2.1]{Geraghty}
    Let $A$ be a commutative ring. If $\lambda\in \Z^n$ is a dominant weight for $\GL_n$, then we define the algebraic representation $Ind^{\GL_n}_{B_n}(w_0\lambda)_{/A}$ of $\GL_{n/A}$ to be
    \begin{equation*}
        \{f\in A[\GL_n]: f(bg)=(w_0\lambda)(b) f(g) \text{ for all $A$-algebras }B, g\in\GL_n(B), b\in B_n(B)\}
    \end{equation*}
    where $A[\GL_n]:=Mor_{\Spec A}(\GL_{n/A},\mathbb A^1_{A})$ \footnote{Here, $\mathbb A^1_A=\Spec A[T]$ is the affine line over $A$.}and $\GL_{n/A}$ acts by right translation.
    We let $$M_{\lambda,A}:= Ind^{\GL_n}_{B_n}(w_0\lambda)_{/A}(A),$$
    which is a representation of $\GL_n(A)$.
\end{definition}
If $E$ is a $p$-adic field with ring of integers $\mathcal O$, then from \cite[page 1349]{Geraghty}, $M_{\lambda,\mathcal O}$ is finite free over $\mathcal O$.
Also, $M_{\lambda,\mathcal O}\otimes_{\mathcal O} E=M_{\lambda,E}$ is the algebraic representation of $\GL_n(E)$ of highest weight $\lambda$. By \cite[page 19]{Newton-Thorne-derived}, for all $\O$-algebras $R$, the natural map $M_{\lambda,\mathcal O}\otimes_{\O} R \to M_{\lambda,R}$ is an isomorphism.

We write $\Z^n_+:=\{(\lambda_1,\dots,\lambda_n)\in\Z^n:\lambda_1\ge \dots\ge \lambda_n\}$.
Let $E$ be a finite extension of $\Q_p$ inside $\overline \Q_p$ which contains all embeddings of $F$ to $\overline\Q_p,$ $\O$ be the ring of integers of $E$, and $\mu\in (\Z^n_+)^{\Hom(F,E)}$.

We define the $\O$-module $$M_{\mu}:=M_{\mu,\O}:=\bigotimes_{\tau\in \Hom(F,E)} M_{\mu_\tau ,\O}$$ which receives an action of $\prod_{v\mid p}\GL_n(\O_{F_v})$ by
 $(g_v)_v \cdot \otimes m_\tau = \otimes g_{v(\tau)}m_{\tau},$ where $v(\tau)$ is the place of $F$ induced by $\tau.$
Then $\GL_n(\A_F^{\infty,p})\times U_p$ acts on $M_{\mu,\O}$ by projection to $U_p:=\prod_{v\mid p}U_v$.
By the formalism above\footnote{Strictly speaking, the formalism gives us an action of $\mathcal H(\GL_n(\A_F^{\infty,p})\times U_p,U).$ Yet, this algebra is canonically isomorphic to $\mathcal H(\GL_n(\A_F^{\infty,p}),U^p)$}, $\mathcal H(\GL_n(\A_F^{\infty,p}),U^p)$ acts on $H^*(X_U,M_{\mu,\O})$.

We define $M_{\mu,E}:=\otimes_{\tau\in \Hom(F,E)} M_{\mu_\tau ,E}$ as an $E[\prod_{v\mid p}\GL_n(F_v)]$-module.
Then $\GL_n(\A_F^{\infty})$ acts on $M_{\mu,E}$ by projection to $\GL_n(F_p):=\GL_n(\prod_{v\mid p}F_v)$.
By the formalism above, $\mathcal H(\GL_n(\A_F^{\infty}),U)$ acts on $H^*(X_U,M_{\mu,E})$.
This is compatible with the construction above, i.e. we have an isomorphism $H^*(X_U,M_{\mu,\O})\otimes_\O E \cong H^*(X_U,M_{\mu,E})$ that is Hecke-equivariant for the restriction map $\mathcal H(\GL_n(\A_F^{\infty,p}),U^p) \to \mathcal H(\GL_n(\A_F^{\infty}),U)$.

Fix an isomorphism $\iota:\overline\Q_p\xrightarrow{\sim}\C$ for the rest of this section. We define $M_{\mu,\C}:=\otimes_{\tau\in \Hom(F,E)} M_{\mu_\tau ,\C}$.
This is acted on by $\prod_{v\mid p}\GL_n(F_v)$ via $\iota$ and also by 
$G(F_\infty)$, where $F_\infty:=F\otimes_\Q \R,$  by 
$$G(F_\infty)\hookrightarrow G(F\otimes_\Q \C)=\prod_{\tau\in \Hom(F,\C)} G(\C)\curvearrowright \otimes_{\tau\in \Hom(F,E)} M_{\mu_\tau ,\C}$$ using the identification $\Hom(F,E)= \Hom(F,\C)$ given by $\iota$.
Note that the 2 actions of $G(F)$ on $M_{\mu,\C}$ agree.
As $M_{\mu,\C}$ is an irreducible representation of $\GL_n(F\otimes_\Q \C)$, it has a central character.
In particular, $A_G$ acts\footnote{Recall that $A_G=\R_{>0}$ is embedded diagonally into the centre of $G(F_\infty).$} by a character
\[\chi^{-1}:A_G\to \C^\times\]
on $M_{\mu,\C}.$
One can show that for all good subgroup $U\le G(\A_F^\infty)$,
\[H^*(X_U,M_{\mu,\C})\cong H^*(\mathfrak g,K_\infty^\circ ,C^\infty(G(F)\backslash G(\A_F)/U,\chi)\otimes_\C M_{\mu,\C}),\]
where $\mathfrak g=Lie(G(F_\infty))$ and 
$$C^\infty(G(F)\backslash G(\A_F)/U,\chi)$$ is the set of smooth functions $f:G(F)\backslash G(\A_F)/U \to \C$ with $f(ag)=\chi(a)f(g)$ for all $a\in A_G$.
The algebra $\mathcal H(\GL_n(\A_F^{\infty}),U)$ acts on $C^\infty(G(F)\backslash G(\A_F)/U,\chi)$ and this induces the Hecke action on the relative Lie algebra cohomology. 
Then this isomorphism of cohomology is compatible with the action of $\mathcal H(\GL_n(\A_F^{\infty}),U)$ on both sides.

\subsection{Regular algebraic automorphic representations}
Let $G=\GL_n$ as above.
\begin{definition}
    Let $\chi:A_G \to \C^\times$ be a continuous homomorphism.
    We write $$L^2(G(F)\backslash G(\A_F),\chi)$$ for the space of measurable functions $f:G(F)\backslash G(\A_F) \to \C$ such that $f(ag)=\chi(a)f(g)$ for all $a\in A_G$ and 
    $$\int_{G(F)\backslash G(\A_F)^1}|f(g)|^2 dg<\infty,$$
    where
    \[G(\A_F)^1:=\{g\in G(\A_F):|\det(g)|_{\A_F}=1\}\]
    and functions which agree almost everywhere are identified.
    Let $$L^2_0(G(F)\backslash G(\A_F),\chi)$$ be the subspace of cuspidal functions in $L^2(G(F)\backslash G(\A_F),\chi)$, i.e. elements $f$ of $L^2(G(F)\backslash G(\A_F),\chi)$ such that for the unipotent radical $U_P$ of every proper parabolic subgroup $P$,
    \[\int_{U_P(F)\backslash U_P(\A_F)} f(ug) du=0\] 
    for almost all $g\in G(\A_F)^1$.
    We define a \emph{cuspidal automorphic representation of character} $\chi$ to be an irreducible subrepresentation in $L^2_0(G(F)\backslash G(\A_F),\chi)$ of $G(\A_F)$.
    A \emph{cuspidal automorphic representation} is one such representation for some $\chi$.
\end{definition}

\begin{definition}
    Let $\chi:A_G \to \C^\times$ be a continuous homomorphism.
    We let $$L^2_d(G(F)\backslash G(\A_F),\chi)$$ be the discrete spectrum, i.e. the closure of the sum of all irreducible subrepresentations of $L^2(G(F)\backslash G(\A_F),\chi)$.
    We define a \emph{discrete automorphic representation of character} $\chi$ to be an irreducible subrepresentation in $L^2_d(G(F)\backslash G(\A_F),\chi)$ of $G(\A_F)$.
    A \emph{discrete automorphic representation} is one such representation for some $\chi$.
\end{definition}

\begin{remark}
    Note that every cuspidal automorphic representation is discrete. Also, by our definition, every discrete automorphic representation $\pi$ is a unitary Hilbert space representation of $G(\A_F)$ after twisting by a suitable character.\footnote{For instance, we can always twist $\pi$ by a Hecke character such that the action of $A_G$ is trivial, i.e. $\chi=1$. This reduces us to the case in \cite[section 3.7]{Getz-Hahn}.} It follows from the irreducibility of $\pi$ that $\pi$ has a central character.
    Moreover, $\pi_\infty$ is admissible by a result of Harish-Chandra \cite[Theorem 4.4.5]{Getz-Hahn}
\end{remark}

\begin{definition}
    Let $\lambda\in (\Z^n_+)^{\Hom(F,\C)}.$
    We say that a cuspidal automorphic representation $\pi$ is \emph{regular algebraic\footnote{This is $C$-algebraic in the sense of Buzzard-Gee.}/cohomological of weight $\lambda$} if $\pi_\infty$ has the same infinitesimal character as $M_{\lambda,\C}^{\vee}$. 
\end{definition}

\begin{lemma}\label{lem:inf determines character}
    Let $v$ be an infinite place of $F$.
    Let $(\rho,V)$ be an irreducible admissible representation of $G(F_v)$ that has central character $\omega$. Then the infinitesimal character of $\rho$ determines $\omega|_{F_v^{\times,\circ}}$, where $F_v^{\times,\circ}$ is the identity component of $F_v^\times$.
\end{lemma}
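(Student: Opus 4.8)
The plan is to use the fact that the centre of the Lie algebra $\mathfrak{gl}_n(F_v)$ sits inside the centre of the universal enveloping algebra, so that the infinitesimal character already records the differential of the central character. Write $Z\cong F_v^\times$ for the centre of $G(F_v)=\GL_n(F_v)$ (the scalar matrices) and $\mathfrak{z}=\operatorname{Lie}(Z)\subset\mathfrak{g}_v:=\operatorname{Lie}(G(F_v))$; under $z\mapsto zI_n$ we identify $\mathfrak{z}$ with $\operatorname{Lie}(F_v^\times)$. Every element of $\mathfrak{z}$ is central in $\mathfrak{g}_v$, hence central in $U(\mathfrak{g}_{v,\C})$, so $\mathfrak{z}_\C\subseteq Z(U(\mathfrak{g}_{v,\C}))$. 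Thus it suffices to show that the restriction of the infinitesimal character to $\mathfrak{z}_\C$ equals the differential $d\omega$, and then that $d\omega$ determines $\omega|_{F_v^{\times,\circ}}$.

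For the first point I would pass to the $(\mathfrak{g}_v,K_v)$-module $V_0\subset V$ of $K_v$-finite vectors, which is irreducible and admissible. By Dixmier's form of Schur's lemma $Z(U(\mathfrak{g}_{v,\C}))$ acts on $V_0$ by a character, namely the infinitesimal character $\chi_\rho$, so in particular $d\rho(X)=\chi_\rho(X)\cdot\operatorname{id}$ for $X\in\mathfrak{z}$. On the other hand, differentiating the identity $\rho(zg)=\omega(z)\rho(g)$ on smooth vectors — legitimate since $\omega$ is a continuous, hence smooth, character of $F_v^\times$ — gives $d\rho(X)=d\omega(X)\cdot\operatorname{id}$ for $X\in\mathfrak{z}$. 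Comparing the two, $d\omega$ is the restriction of $\chi_\rho$ to $\mathfrak{z}$ (equivalently, the $\R$-linear restriction of the $\C$-linear functional $\chi_\rho|_{\mathfrak{z}_\C}$), so $d\omega$ is determined by $\chi_\rho$. Finally, since $F_v^{\times,\circ}$ is $\R_{>0}$ when $v$ is real and all of $\C^\times$ when $v$ is complex, the exponential map $\mathfrak{z}=\operatorname{Lie}(F_v^\times)\to F_v^{\times,\circ}$ is surjective and $\omega(\exp X)=\exp(d\omega(X))$; hence $\omega|_{F_v^{\times,\circ}}$ is completely determined by $d\omega$, and therefore by $\chi_\rho$.

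I do not expect a serious obstacle: the argument is a soft one. The only points needing care are invoking Schur's lemma in the correct (Dixmier) form for admissible, not necessarily unitary, representations — which is what makes both the infinitesimal character and the scalar action of $\mathfrak{z}$ on $V_0$ well defined — and keeping the real/complex bookkeeping straight, since for a complex place $\mathfrak{z}$ is two-dimensional over $\R$ while the enveloping algebra is formed from the complexification $\mathfrak{g}_{v,\C}$; in that case one simply notes that an $\R$-linear functional on $\mathfrak{z}$ is recovered from its $\C$-linear extension to $\mathfrak{z}_\C$, and the rest of the argument is unchanged.
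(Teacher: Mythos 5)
Your proposal is correct and is essentially the paper's argument in more conceptual clothing: the paper also exploits that scalar matrices lie in the centre of $U(\mathfrak g_{v,\C})$, differentiates $\rho(e^{Xt})v=\omega(e^{Xt})v$ to read off the derivative of $\omega$ from the infinitesimal character, and then recovers $\omega$ on the identity component (explicitly solving for the exponents $s$, resp.\ $s_1,s_2$, rather than invoking surjectivity of $\exp$ as you do). No gaps.
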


\begin{proof}
    First assume $v$ is a real place.
    Let us use the corresponding real embedding to identify $F_v$ with $\R$.
    Then there is $s\in\C$ such that $\omega(y)=y^s$ for all $y\in F_v^{\times,\circ}=\R_{>0}.$
    Let $X=\begin{pmatrix}
        1 & \\
        & \ddots &\\
        & & 1
    \end{pmatrix}$
    as an element in the centre of the complexified universal enveloping algebra of $G(F_v)$.
    By definition of the Lie algebra action, for all $u\in V_{sm}$,
    \[X\cdot u = \frac{d}{dt}\Big|_{t=0}\rho(e^{Xt})u=\frac{d}{dt}\Big|_{t=0} e^{st}u=su.\]
    Thus, the infinitesimal character determines $s$ and hence $\omega|_{F_v^{\times,\circ}}$.

    Now, assume $v$ is a complex place.
    Let $\sigma_1,\sigma_2:F\to \C$ be the two complex embeddings corresponding to $v$. Use the same notation for the induced isomorphisms $F_v\xrightarrow{\sim}\C.$
    Then there are $s_1,s_2\in\C$ with $s_1-s_2\in\Z$ such that $\omega(y)=\sigma_1(y)^{s_1}\sigma_2(y)^{s_2}$ for all $y\in F_v^{\times}\;(\cong\;\C^\times).$
    For $x\in F_v$, let $X=\begin{pmatrix}
        x & \\
        & \ddots &\\
        & & x
    \end{pmatrix}$
    as an element in the centre of the complexified universal enveloping algebra of $G(F_v)$.
    By definition of the Lie algebra action, for all $u\in V_{sm}$,
    \[X\cdot u = \frac{d}{dt}\Big|_{t=0}\rho(e^{Xt})u=\frac{d}{dt}\Big|_{t=0} e^{s_1\sigma_1(x)t+s_2\sigma_2(x)t}u=(s_1\sigma_1(x)+s_2\sigma_2(x))u.\]
    Taking $x=1$ and $x=\sigma_1^{-1}(i)$ gives us two equations that allows us to solve for $s_1,s_2$.
    Thus, the infinitesimal character determines $s_1,s_2$ and hence $\omega|_{F_v^{\times}}$.
\end{proof}

\begin{lemma}\label{lem: character of regular alg}
    If $\pi$ is a regular algebraic automorphic representation of weight $\lambda$, then the restriction of its central character to $F_\infty^{\times\circ}$ is the inverse of that of $M_{\lambda,\C}$.
    In particular, $A_G$ acts trivially on $\pi_\infty\otimes M_\lambda$.
\end{lemma}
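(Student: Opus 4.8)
The plan is to reduce the statement about the global central character to the local computation already carried out in \cref{lem:inf determines character}, applied simultaneously at every infinite place of $F$.

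\medskip

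\noindent\textbf{Step 1: Compare infinitesimal characters place by place.} By definition, $\pi$ being regular algebraic of weight $\lambda$ means that $\pi_\infty=\bigotimes_{v\mid\infty}\pi_v$ has the same infinitesimal character as $M_{\lambda,\C}^\vee=\bigotimes_{v\mid\infty}M_{\lambda_v,\C}^\vee$. Since the infinitesimal character of a tensor product is the product (in the appropriate sense — i.e., the Harish-Chandra homomorphism for $\prod_v G(F_v)$ factors through the tensor product of the individual centres), this forces $\pi_v$ and $M_{\lambda_v,\C}^\vee$ to have the same infinitesimal character for each infinite place $v$ individually. Here $\pi_v$ is irreducible and admissible (the latter by the Harish-Chandra result quoted in the Remark), and it has a central character $\omega_v$ because $\pi$ has a central character.

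\medskip

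\noindent\textbf{Step 2: Apply the local lemma.} By \cref{lem:inf determines character}, the infinitesimal character of an irreducible admissible representation of $G(F_v)$ with central character $\omega_v$ determines $\omega_v|_{F_v^{\times,\circ}}$. Applying this to both $\pi_v$ and $M_{\lambda_v,\C}^\vee$, and using that they share an infinitesimal character, we conclude that the restriction of the central character of $\pi_v$ to $F_v^{\times,\circ}$ equals that of $M_{\lambda_v,\C}^\vee$, which is the inverse of the restriction of the central character of $M_{\lambda_v,\C}$ to $F_v^{\times,\circ}$. (A small check: taking contragredient inverts the central character, which is used here and is routine.) Taking the product over all $v\mid\infty$ identifies $F_\infty^{\times,\circ}=\prod_{v\mid\infty}F_v^{\times,\circ}$ and gives the first claim: the restriction of the central character of $\pi$ to $F_\infty^{\times,\circ}$ is the inverse of that of $M_{\lambda,\C}$.

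\medskip

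\noindent\textbf{Step 3: Deduce the statement about $A_G$.} Recall $A_G=\R_{>0}$ embeds diagonally into the centre of $G(F_\infty)$, hence lands in $F_\infty^{\times,\circ}$ via the scalar matrices $t\mapsto (t,\dots,t)$. On $M_{\lambda,\C}$, the centre acts by the central character of $M_{\lambda,\C}$, and on $\pi_\infty$ by the central character of $\pi$; by Step 2 these restrictions to $F_\infty^{\times,\circ}$ are mutually inverse, so $A_G$ acts on $\pi_\infty\otimes_\C M_{\lambda,\C}$ through the product of a character and its inverse, i.e.\ trivially.

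\medskip

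\noindent\textbf{Expected main obstacle.} The only genuinely substantive point is the compatibility in Step 1 — that a shared infinitesimal character for the tensor product over all $v\mid\infty$ really does force equality at each $v$ separately — and, relatedly, making sure the bookkeeping between $M_{\lambda,\C}$, its contragredient $M_{\lambda,\C}^\vee$, and the sign conventions for central characters is consistent throughout. Neither is deep, but they are where care is needed; everything else is an immediate invocation of \cref{lem:inf determines character}.
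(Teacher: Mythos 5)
Your proposal is correct and follows essentially the same route as the paper: reduce to each infinite place (where the infinitesimal character of $\pi_v$ agrees with that of $M_{\lambda_v}^\vee$), invoke \cref{lem:inf determines character} to conclude the central characters agree on $F_v^{\times,\circ}$, and note that $A_G$ sits inside $F_\infty^{\times,\circ}$. The paper's proof is just a terser version of the same argument, and the place-by-place compatibility you flag as the "main obstacle" is treated there as immediate.
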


\begin{proof}
    Let $M_{\lambda_v}:=\begin{cases}
        M_{\lambda_{\tau},\C} &\text{if $v$ is real}\\
        M_{\lambda_{\tau},\C}\otimes_\C M_{\lambda_{\bar\tau},\C} &\text{if $v$ is complex}
    \end{cases}$,
    where $\tau$ is an embedding $F\to \C$ whose associated place is $v$.
    Then the infinitesimal character of $\pi_v$ is the same as that of $M_{\lambda_v}^\vee$.
    We want to show that the restriction of the central character of $\pi_v$ to $F_v^{\times,\circ}$ is that of $M_{\lambda_v}^\vee=M_{\lambda_v^\vee}$.
    This follows from \Cref{lem:inf determines character}.
\end{proof}

\begin{definition}
    We define $$b_n:=r_1 \lf n^2/4\rf+r_2 n(n-1)/2,\; t_n:= r_1 \lf (n+1)^2/4\rf+r_2 n(n+1)/2-1.$$ If $n$ is clear, we may just write $b$ and $t$ instead.\footnote{It is also common in the literature to write $q_0$ for $b_n$ and $q_0+\ell_0$ for $t_n$.}
\end{definition}

\begin{definition}\label{defn: permissible}
    Let $\pi$ be a regular algebraic cuspidal automorphic representation of weight $\lambda$.
    Let $\epsilon\in (K_\infty/K_{\infty}^{\circ})^{\widehat{}}=\{1,sgn\}^{r_1}$. 
    We say that $\epsilon$ is a \emph{permissible signature} if $n$ is even, or $n$ is odd and $\epsilon_v$ is the central character of $\pi_v\otimes M_{\lambda_v}$ restricted to $\{\pm 1\}$ for all real places $v$.
\end{definition}

\begin{lemma}\label{1 dim space}
    Let $\pi$ be a regular algebraic cuspidal automorphic representation of weight $\lambda$. Let $\epsilon$ be  a permissible signature. Then for $i\in\{b_n,t_n\}$, the space 
\begin{equation*}
    H^{i}(\mathfrak g, K_\infty^0,\pi_\infty\otimes_\C M_{\lambda,\C})[\epsilon] 
\end{equation*}
is 1 dimensional, where $[\epsilon]$ denotes the $\epsilon$-isotypic component.
\end{lemma}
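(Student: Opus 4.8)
The plan is to reduce to a purely local and Lie-theoretic computation, the key input being Wigner's lemma and the classification of $(\mathfrak{g},K_\infty^\circ)$-cohomology of cohomological representations. First I would decompose the group $\mathfrak{g}=\bigoplus_{v\mid\infty}\mathfrak{g}_v$ and write $\pi_\infty=\bigotimes_{v\mid\infty}\pi_v$, $M_{\lambda,\C}=\bigotimes_{v\mid\infty}M_{\lambda_v}$, and $K_\infty^\circ=\prod_{v\mid\infty}K_v^\circ$. Since $A_G$ acts trivially on $\pi_\infty\otimes M_{\lambda,\C}$ by \Cref{lem: character of regular alg}, the relative Lie algebra cohomology only involves the semisimple part, and the Künneth formula gives
\[
H^i(\mathfrak{g},K_\infty^\circ,\pi_\infty\otimes M_{\lambda,\C})=\bigoplus_{\sum_v i_v=i}\ \bigotimes_{v\mid\infty}H^{i_v}(\mathfrak{g}_v,K_v^\circ,\pi_v\otimes M_{\lambda_v}).
\]
Taking the $\epsilon$-isotypic part commutes with this decomposition (the factor $K_\infty/K_\infty^\circ=\{\pm1\}^{r_1}$ acts componentwise at the real places), so I am reduced to the local claim: for each real place $v$, $\bigoplus_{i_v} H^{i_v}(\mathfrak{gl}_n(\R),\mathrm{O}_n(\R)^\circ,\pi_v\otimes M_{\lambda_v})[\epsilon_v]$ is one-dimensional, concentrated in a single degree with the extremes of that degree-range matching the summands of $b_n$ and $t_n$; and similarly for each complex place with $K_v^\circ=\mathrm{U}_n(\R)$.

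The main step is the local computation. Since $\pi_v$ is cohomological, $\pi_v\otimes M_{\lambda_v}$ has the same infinitesimal character as the trivial representation, so by Wigner's lemma the cohomology vanishes unless the relevant Lie algebra cohomology $H^\bullet(\mathfrak{g}_v,K_v^\circ;\ \cdot\ )$ is computed against a summand in the same infinitesimal character block; for $\mathrm{GL}_n$ the relevant local representations are (essentially) tempered cohomological, and Delorme's theorem (or the explicit results of Borel--Wallach, Ch.~II--III, together with the $\mathrm{GL}_n(\R)$ and $\mathrm{GL}_n(\C)$ computations of e.g.\ \cite{Clozel}) identifies $H^\bullet(\mathfrak{g}_v,K_v^\circ;\pi_v\otimes M_{\lambda_v})$ with an exterior algebra on a space of dimension $\ell_0(v)$, placed in degrees $[q_0(v),q_0(v)+\ell_0(v)]$, where for $\mathrm{GL}_{n}(\R)$ one has $q_0(v)=\lfloor n^2/4\rfloor$, $\ell_0(v)=\lfloor (n-1)/2\rfloor$ — wait, more precisely the relevant ranks are those making $\sum_v q_0(v)=b_n$ and $\sum_v(q_0(v)+\ell_0(v))=t_n$. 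The point is that the action of $\pi_0(K_v)=\{\pm1\}$ on the top and bottom graded pieces of this exterior algebra is by a single character; for $n$ even this character is determined and every $\epsilon_v$ is realized appropriately (which is why no condition on $\epsilon$ is needed), while for $n$ odd the bottom (degree $q_0(v)$) piece transforms by the central character of $\pi_v\otimes M_{\lambda_v}$ restricted to $\{\pm1\}$, which is exactly the condition built into \Cref{defn: permissible}. Hence $H^{q_0(v)}(\mathfrak{g}_v,K_v^\circ;\pi_v\otimes M_{\lambda_v})[\epsilon_v]$ is one-dimensional, and a parity/duality argument (Poincaré duality in $(\mathfrak{g},K)$-cohomology, which swaps degree $q_0(v)+j$ with $q_0(v)+\ell_0(v)-j$ and twists the $\pi_0$-action by the character of $\mathrm{O}_n(\R)$ on the top exterior power of $\mathfrak{g}_v/\mathfrak{k}_v$) shows the top piece in degree $q_0(v)+\ell_0(v)$ is also one-dimensional on the nose for $\epsilon_v$ permissible. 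Assembling over all $v\mid\infty$ via Künneth: for $i=b_n$ only the tuple $(i_v)=(q_0(v))_v$ contributes and the $\epsilon$-part is a tensor product of one-dimensional spaces, hence one-dimensional; for $i=t_n=\sum_v(q_0(v)+\ell_0(v))$ only the tuple of top degrees contributes, again giving a one-dimensional space.

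I expect the main obstacle to be bookkeeping the $\pi_0(K_\infty)$-action precisely at the real places — i.e.\ pinning down exactly which character of $\{\pm1\}^{r_1}$ acts on the bottom and top cohomology, and checking this matches the permissibility condition in both the even and odd $n$ cases. This requires being careful with the distinction between $K_v$ and $K_v^\circ$, the twist by the character of $K_v$ on $\bigwedge^{\mathrm{top}}(\mathfrak{g}_v/\mathfrak{k}_v)$, and the interplay with the central character; the complex places contribute no such subtlety since $\mathrm{U}_n(\R)$ is connected. A clean way to organize this is to first treat the trivial-coefficient / trivial-weight case (where $\pi_v$ is the relevant constituent of a principal series and the computation is the classical one for the compact dual symmetric space), and then note that twisting by $M_{\lambda_v}$ shifts only the infinitesimal character and not the shape of the cohomology, while twisting the whole representation by a character adjusts the $\pi_0$-action in a way tracked by \Cref{lem:inf determines character} and \Cref{lem: character of regular alg}.
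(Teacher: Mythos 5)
Your overall strategy (reduce to local $(\mathfrak g_v,K_v)$-cohomology via K\"unneth, then invoke Clozel's computation and track the $\pi_0(K_v)$-action against the permissibility condition) is the same as the paper's, and your local analysis at the real places is essentially right. But there is a genuine gap in the very first step: the identity
\[
H^i(\mathfrak{g},K_\infty^\circ,\pi_\infty\otimes M_{\lambda,\C})\;=\;\bigoplus_{\sum_v i_v=i}\ \bigotimes_{v\mid\infty}H^{i_v}(\mathfrak{g}_v,K_v^\circ,\pi_v\otimes M_{\lambda_v})
\]
is false as written, because $K_\infty^\circ$ is \emph{not} a product over the infinite places. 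In this paper $K_\infty = A_G\cdot(O_n(\R)^{r_1}\times U_n(\R)^{r_2})$ contains a single copy of $\R_{>0}$ embedded diagonally, not one copy per place. So $\mathfrak g/\mathfrak k_\infty$ is not $\prod_v \mathfrak g_v/\mathfrak k_v$ for any consistent local choice of $\mathfrak k_v$: if the local $K_v^\circ$ omit the central $\R_{>0}$, the product quotients by too little, and if they include it, by too much. The correct decomposition is $\mathfrak g/\mathfrak k_\infty = \bigl(\prod_v \mathfrak g_v^1/\mathfrak k_v^1\bigr)\oplus\mathfrak h$ with an extra abelian factor $\mathfrak h\cong\R^{r_1+r_2-1}$ on which $K_\infty$ acts trivially, contributing a tensor factor $\wedge^*\mathfrak h^\vee$ to the cohomology. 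This is exactly the ``slight complication'' the paper flags, and it is not cosmetic: it is the source of the ``$-1$'' in $t_n = r_1\lf(n+1)^2/4\rf + r_2 n(n+1)/2 - 1$, and your patch ``$\sum_v(q_0(v)+\ell_0(v))=t_n$'' cannot be made to hold.

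Concretely, take $F$ real quadratic and $n=2$, so $b_2=2$, $t_2=3$, and $\mathfrak h\cong\R$. The true answer is $(H^1_{v_1}\otimes H^1_{v_2})\otimes\wedge^*\R^\vee$ with each local piece one-dimensional per sign in degree $1$ only; this gives a one-dimensional $\epsilon$-part in degrees $2$ and $3$, as claimed. Your K\"unneth with $K_v^\circ=SO(2)$ (no center) puts each local piece in degrees $1$ and $2$, so degree $3$ receives contributions from both $(1,2)$ and $(2,1)$ and is two-dimensional; with the local center included, degree $3$ vanishes. Either way the statement fails at $i=t_n$. The fix is the paper's: introduce $H=\{(a_v)\in\prod_{v\mid\infty}\R_{>0}: |\det|_{\A_F}=1\}$, pull the trivial $K_\infty$-module $\wedge^*\mathfrak h^\vee$ out of the relative Lie algebra complex first, replace $K_\infty^\circ$ by $\prod_v K_v^1$ using the triviality of the $A_G$-action (\cref{lem: character of regular alg}), and only then apply K\"unneth to the factors $H^*(\mathfrak g_v^1,\mathfrak k_v^1,\pi_v\otimes M_{\lambda_v,\C})$; the count at $i=b_n$ uses $\wedge^0\mathfrak h^\vee$ and at $i=t_n$ uses $\wedge^{r_1+r_2-1}\mathfrak h^\vee$, each one-dimensional.
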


The strategy is to use K\"unneth formula and Clozel's result for Lie algebra cohomology of $\pi_v\otimes M_{\lambda_v}$. A slight complication is caused by the fact that our $K_\infty$ (which contains $A_G$) does not factor as a product over the infinite places.

\begin{proof}
    Let 
    \begin{align*}
        G^1(F_\infty)&:=\left\{(g_v)\in G(F_\infty): \prod_{v\mid\infty}|\det g_v|_{F_v}=1\right\},\\
        G^1(F_v)&:=\{g_v\in G(F_v): |\det g_v|_v=1\},\\
        H&:=\left\{(a_v)\in \prod_{v\mid \infty}\R_{>0}:\prod_v |a_v|_{F_v}=1\right\}\subset G(F_\infty),
    \end{align*}
    where we view $\prod_{v\mid \infty}\R_{>0}$ as a subgroup of the centre of $G(F_\infty)$ via the diagonal embedding.
    Let $\mathfrak g^1_\infty, \mathfrak g^1_v, \mathfrak h$ be the corresponding Lie algebras.
    Let $$K^1_v:=
    \begin{cases}
        O_n(\R) \text{ if $v$ is real}\\
        U_n(\R) \text{ if $v$ is complex}
    \end{cases},
    $$
    which is a subgroup of $G^1(F_v)$, so $\mathfrak k^1_v :=Lie(K^1_v) \subset \mathfrak g^1_v.$

    From the Lie group decompositions $G(F_\infty)= G^1(F_\infty) \times A_G$ and $K_\infty= \prod_{v\mid \infty}K^1_v \times A_G$, we get corresponding decompositions of Lie algebras, which in turns give
    \begin{equation}\label{eq: lie alg decomposes}
        \mathfrak g/\mathfrak k_\infty = \mathfrak g^1_\infty /\prod_{v\mid \infty}\mathfrak k^1_v
    \end{equation}
    as $\C[K_\infty]$-modules, where $K_\infty$ acts by conjugation.

    Similarly, from the Lie group decomposition $G^1(F_\infty) =\prod_{v\mid \infty}G^1(F_v) \times H$, we get a corresponding Lie algebra decomposition for $\mathfrak g^1_\infty$, which we can substitute to equation \eqref{eq: lie alg decomposes} to get
    \begin{equation}\label{eq: lie alg decomposes v2}
        \mathfrak g/\mathfrak k_\infty = \left(\prod_{v\mid \infty}\frac{\mathfrak g^1_v}{\mathfrak k^1_v}\right) \times \mathfrak h
    \end{equation}
    as $\C[K_\infty]$-modules. Note that the action of $K_\infty$ on $\mathfrak h$ is trivial.

    The relative Lie algebra complex computing $H^{i}(\mathfrak g, K_\infty^0,\pi_\infty\otimes_\C M_{\lambda,\C})$ is by definition the $i$-th cohomology of
    \begin{align*}
        &\;\;(\wedge^* (\mathfrak g/\mathfrak k_\infty)^\vee \otimes_\R (\pi_\infty)_{K_\infty\text{-fin}} \otimes_\C M_{\lambda,\C})^{K_\infty^\circ}\\
        = &\left(\wedge^* \left(\prod_{v\mid \infty}\frac{\mathfrak g^1_v}{\mathfrak k^1_v}\right)^\vee\otimes_\R \wedge^*\mathfrak h^\vee \otimes_\R (\pi_\infty)_{K_\infty\text{-fin}} \otimes_\C M_{\lambda,\C}\right)^{K_\infty^\circ}
    \end{align*}
    where $(\pi_\infty)_{K\text{-fin}}$ means the $K_\infty$-finite vectors of $\pi_\infty$.
    As the action of $K_\infty$ on $\mathfrak h$ is trivial, we can pull out that factor from the cohomology and get
    $$H^{i}(\mathfrak g, K_\infty^0,\pi_\infty\otimes_\C M_{\lambda,\C}) = \bigoplus_{a+b=i} H^a\left(\left(\wedge^* \left(\prod_{v\mid \infty}\frac{\mathfrak g^1_v}{\mathfrak k^1_v}\right)^\vee\otimes_\R (\pi_\infty)_{K_\infty\text{-fin}} \otimes_\C M_{\lambda,\C}\right)^{K_\infty^\circ}\right) \otimes_\R \wedge^b\mathfrak h^\vee .$$
    Since $A_G$ acts trivially on $\wedge^* \left(\prod_{v\mid \infty}\frac{\mathfrak g^1_v}{\mathfrak k^1_v}\right)^\vee$ and $\pi_\infty\otimes_\C M_{\lambda,\C}$ by \Cref{lem: character of regular alg}, we can replace $K_\infty^\circ$ on the right hand side by $\prod_{v\mid \infty}K_v^1$. Then by definition
    $$H^{i}(\mathfrak g, K_\infty^0,\pi_\infty\otimes_\C M_{\lambda,\C}) = \bigoplus_{a+b=i}H^{a}\left(\prod_{v\mid\infty}\mathfrak g^1_v, \prod_{v\mid\infty}\mathfrak k^1_v,\pi_\infty\otimes_\C M_{\lambda,\C}\right) \otimes_\R \wedge^b\mathfrak h^\vee,$$
    which by K\"unneth formula \cite[section 1.3 equation (2)]{BW_cts_cohomology} equals
    $$\bigoplus_{a_1+\dots+a_{m}+b=i}H^{a_1}\left(\mathfrak g^1_v,\mathfrak k^1_v,\pi_v\otimes_\C M_{\lambda_v,\C}\right)\otimes_\C\cdots\otimes_\C H^{a_m}\left(\mathfrak g^1_v,\mathfrak k^1_v,\pi_v\otimes_\C M_{\lambda_v,\C}\right) \otimes_\R \wedge^b\mathfrak h^\vee,$$
    where $m:=r_1+r_2$, $M_{\lambda_v,\C}:=\begin{cases}
        M_{\lambda_{\tau},\C} &\text{if $v$ is real}\\
        M_{\lambda_{\tau},\C}\otimes_\C M_{\lambda_{\bar\tau},\C} &\text{if $v$ is complex}
    \end{cases}$,
    and $\tau$ is an embedding $F\to \C$ whose associated place is $v$.
    The result now follows from \cite[Lemma 3.14]{Clozel}, \Cref{rmk: clozel}, and the fact that $\mathfrak h\cong \R^{r_1+r_2-1}$.
\end{proof}

\begin{remark}\label{rmk: clozel}
    For even $n$, \cite[Lemma 3.14]{Clozel} only worked with trivial $\epsilon_v$. To deduce the result for general $\epsilon$, one can twist $\pi$ by a suitable Hecke character $F^\times\backslash \A_F^\times \to \{\pm 1\}$, which can for instance be constructed from a suitable character of $\overline{F^\times F_\infty^{\times,\circ}}\backslash \A_F^\times\cong \Gal(F^{ab}/F)$.
\end{remark}

\subsection{More on cohomology}

\begin{definition}
    We define the \emph{cuspidal cohomology} as $$H^*_{cusp}(X_U,M_{\mu,\C}):=H^*(\mathfrak g,K_\infty^\circ , M_{\mu,\C}\otimes_\C L^2_0(G(F)\backslash G(\A_F),\chi)^U),$$
    where $\chi^{-1}$ is the restriction to $A_G$ of the central character of $G(F\otimes \C)$ on $M_{\mu,\C}$.
\end{definition}
The cuspidal cohomology is also acted on by $\mathcal H(G(\A_F^{\infty}),U)$ and there is a $\mathcal H(G(\A_F^{\infty}),U)$-equivariant injection
\[H^*_{cusp}(X_U,M_{\mu,\C}) \hookrightarrow H^*(X_U,M_{\mu,\C}).\]
By multiplicity one and semisimplicity of $L^2_0(G(F)\backslash G(\A_F),\chi)$ \cite[Corollary 9.1.3, Theorem 11.4.3]{Getz-Hahn}, we know 
$$L^2_0(G(F)\backslash G(\A_F),\chi)=\widehat\bigoplus_{\pi}\pi,$$ where the sum ranges over all cuspidal automorphic representations of character $\chi$.
We have a corresponding decomposition of the cuspidal cohomology into finite algebraic sum \cite[Theorem 4.1]{Schwermer}, \cite[Lemma 3.15]{Clozel}:
\begin{equation}\label{decomp of H cusp}
    H^*_{cusp}(X_U,M_{\mu,\C})= \oplus_\pi H^*(\mathfrak g,K_\infty^\circ , M_{\mu,\C}\otimes_\C \pi_\infty)\otimes_\C (\pi^\infty)^U.
\end{equation}
By strong multiplicity one, for every cuspidal automorphic representation of character $\chi$, the $(\pi^{\infty,S})^{U^S}$-isotypic component is 
\begin{equation}\label{eq: cusp isotypic}
    H^*_{cusp}(X_U,M_{\mu,\C})[(\pi^{\infty,S})^{U^S}]=H^*(\mathfrak g,K_\infty^\circ ,\pi_\infty\otimes_\C M_{\mu,\C})\otimes_\C (\pi^\infty)^U.
\end{equation}
By \Cref{1 dim space}, if $\pi$ is regular algebraic cuspidal of weight $\lambda$ with $\pi^U\neq 0$, then 
there is a $\H(\GL_n(\A_F^\infty),U)$-equivariant injection $(\pi^\infty)^U \hookrightarrow H^{b_n}_{cusp}(X_U,M_{\lambda,\C})$. The same holds for the top degree $t_n$.

\begin{definition}
    We define the \emph{inner cohomology} by $H^*_!=im(H^*_c \to H^*)$, where $H^*_c$ is the compactly supported cohomology.
\end{definition}
Then $H^*_!(X_U,M_{\mu,A})$ for $A\in\{\O,E,\C\}$ is also acted on by the Hecke algebras by restriction of the action on $H^*(X_U,M_{\mu,A})$.
Moreover, there is a Hecke-equivariant injection 
\[H^*_{cusp}(X_U,M_{\mu,\C}) \hookrightarrow H^*_!(X_U,M_{\mu,\C}).\]

\begin{lemma}\label{lem:T acts semisimply}
    \begin{enumerate}[(a)]
        \item \label{semisimple}$\mathcal H_\C:=\mathcal H(G^S,U^S)\otimes_\Z \C$ acts semisimply on $H^*_!(X_U,M_{\mu,\C})$ and there is a decomposition
        \[H^*_!(X_U,M_{\mu,\C})\cong \bigoplus_{\Pi\in \Pi_d} m(\Pi)(\Pi^{\infty,S})^{U^S},\]
        where $\Pi_d$ is the set of isomorphism classes of all discrete automorphic representations occurring as a subrepresentation of $L^2_d(\GL_n(F)\backslash \GL_n(\A_F),\chi)$ and $m(\Pi)\in \Z_{\ge 0}$.
        \item \label{isotypic} If $\pi$ is a cuspidal automorphic representation of character $\chi$, then the $(\pi^{\infty,S})^{U^S}$-isotypic component is 
        \[H^*_!(X_U,M_{\mu,\C})[(\pi^{\infty,S})^{U^S}]=H^*(\mathfrak g,K_\infty^\circ,\pi_\infty\otimes_\C M_{\mu,\C})\otimes_\C (\pi^\infty)^U.\]
        \item \label{reduced}If $U^S:=\GL_n(\prod_{v\notin S\cup\{\infty\}}\O_{F_v})$, then the ring $$\T^S_\C(H^*_!(X_U,M_{\mu,\C})):=im(\mathcal H_\C \to \End_\C(H^*_!(X_U,M_{\mu,\C})))$$ is reduced.
    \end{enumerate}
\end{lemma}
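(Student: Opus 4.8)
The plan is to derive all three parts from the decomposition of the inner cohomology into isotypic pieces indexed by discrete automorphic representations, together with the semisimplicity of the relevant Hecke module. First I would establish part (a) and (b) simultaneously. Recall from the discussion preceding the lemma that $H^*_{cusp}(X_U,M_{\mu,\C})$ decomposes as $\bigoplus_\pi H^*(\mathfrak g,K_\infty^\circ,\pi_\infty\otimes M_{\mu,\C})\otimes(\pi^\infty)^U$ over cuspidal $\pi$ of the appropriate central character, by multiplicity one and semisimplicity of $L^2_0$. The inner cohomology $H^*_!$ is larger, so to get the analogous statement one needs the square-integrable (discrete, non-cuspidal) Eisenstein-type contributions: I would invoke Franke's theorem / the Franke–Schwermer decomposition of $H^*(X_U,M_{\mu,\C})$ along the associate classes of parabolics, and the fact that the inner cohomology is exactly the part coming from the associate class of $G$ itself, which is governed by the discrete spectrum $L^2_d$. (Alternatively, one cites directly the statement, e.g. in \cite{Borel-Wallach} or the references used in the body, that $H^*_!$ is a subquotient of $H^*(\mathfrak g,K_\infty^\circ,L^2_d\otimes M_{\mu,\C})$, hence decomposes over $\Pi\in\Pi_d$.) Combined with $(\pi^\infty)^U=\bigotimes_{v}(\pi_v)^{U_v}$ and the semisimple action of the abstract spherical Hecke algebra $\mathcal H(G^S,U^S)\otimes\C$ on each local spherical vector space (a commutative semisimple algebra acting on a finite-dimensional space through distinct characters for distinct $\Pi$ by strong multiplicity one for $\GL_n$), this gives both the decomposition in (a) and the fact that $\mathcal H_\C$ acts semisimply, as a product of matrix algebras one for each occurring eigensystem; part (b) is then the single-summand version of this, already recorded in \eqref{eq: cusp isotypic}-style form.

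For part (c), I would argue as follows. The image $\T^S_\C(H^*_!(X_U,M_{\mu,\C}))$ is the image of the commutative algebra $\mathcal H_\C$ in $\End_\C$ of a semisimple module, hence is itself a commutative semisimple (finite-dimensional) $\C$-algebra, hence reduced; concretely, by part (a) the action factors through $\prod_{\Pi}\C$ (one factor per occurring eigensystem $\Lambda_\Pi:\mathcal H_\C\to\C$), and a subalgebra of a finite product of fields is reduced. The only genuine point to check is that $\mathcal H_\C$ really does act through scalars on each isotypic block $(\Pi^{\infty,S})^{U^S}$, i.e. that the block is an $\mathcal H_\C$-eigenspace; this is immediate since $\mathcal H_\C=\mathcal H(G^S,U^S)\otimes\C$ is the \emph{spherical} Hecke algebra away from $S$ and $(\Pi^{\infty,S})^{U^S}=\bigotimes_{v\notin S}(\Pi_v)^{\GL_n(\O_{F_v})}$ is a line on which each local spherical Hecke algebra acts by the Satake parameter of $\Pi_v$, the full tensor product being a single common eigenline for $\mathcal H_\C$.

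The main obstacle is the input needed for (a): one must know that $H^*_!(X_U,M_{\mu,\C})$ is a \emph{semisimple} $\mathcal H_\C$-module and that only discrete automorphic representations contribute, i.e. controlling the failure of cuspidal cohomology to equal inner cohomology. This is where Franke's theorem on the decomposition of automorphic cohomology (or, more precisely, the identification of inner cohomology with the discrete-spectrum contribution, together with the fact that residual representations still have $(\Pi^{\infty,S})^{U^S}$ a spherical eigenline) is doing the real work; everything else — semisimplicity of a commutative algebra on a semisimple module, reducedness of a subalgebra of a product of fields, strong multiplicity one distinguishing the eigensystems — is formal once that decomposition is in hand.
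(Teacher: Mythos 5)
Your proposal is correct and follows essentially the same route as the paper: sandwich $H^*_{cusp}\hookrightarrow H^*_!\hookrightarrow$ (image of the discrete-spectrum $(\mathfrak g,K)$-cohomology), decompose the latter using multiplicity one for $L^2_d$ (Moeglin--Waldspurger) and Borel's theorem, deduce semisimplicity of $H^*_!$ as a submodule of a semisimple module, use strong multiplicity one for isobaric (Speh) representations for the isotypic component, and conclude reducedness from commutativity plus semisimplicity. The only cosmetic difference is that the paper invokes the lighter input you mention as an alternative --- Borel's result that $H^*_!$ injects into the image of $H^*(\mathfrak g,K_\infty^\circ,L^2_d\otimes M_{\mu,\C})$ \cite[Proposition 3.18]{Clozel} --- rather than the full Franke decomposition.
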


\begin{proof}
    Let $H_2^*=H^*(\mathfrak g,K_\infty^\circ,L^2_d(\GL_n(F)\backslash \GL_n(\A_F),\chi)_{sm}\otimes_\C M_{\mu,\C}),$ where $()_{sm}$ stands for the smooth vectors.
    We have a Hilbert space decomposition
    \[L^2_d(\GL_n(F)\backslash \GL_n(\A_F),\chi)\cong\widehat\bigoplus_{\Pi\in \Pi_d}m_d(\Pi)\Pi.\]
    By the multiplicity one theorem for the discrete spectrum proved by Moeglin and Waldspurger \cite{Moeglin-Waldspurger}, we have $m_d(\Pi)=1$ for all such $\Pi$.
    We have a corresponding decomposition for the cohomology \cite[Theorem 5.3]{Borel-Garland}:
    \begin{equation}\label{eq:H_2 decomp}
        H_2^*\cong \bigoplus_{\Pi\in \Pi_d} H^*(\mathfrak g,K_\infty^\circ,\Pi_\infty\otimes_\C M_{\mu,\C})\otimes_\C (\Pi^\infty)^U.
    \end{equation}
    Each $\Pi^\infty$ is an irreducible admissible representation of $\GL_n(\A_F^\infty)$, so it is factorisable, so $(\Pi^\infty)^U=(\Pi_S)^{U_S}\otimes_\C (\Pi^{\infty,S})^{U^S}$ is isomorphic to a direct sum of simple modules of $\mathcal H_\C$.
    It follows that $H_2^*$ is a semisimple $\mathcal H_\C$-module.

    By a result of Borel \cite[Proposition 3.18]{Clozel}, we have injections
    \begin{equation}\label{eq:3 injections}
        H^*_{cusp}(X_U,M_{\mu,\C}) \hookrightarrow H^*_{!}(X_U,M_{\mu,\C}) \hookrightarrow \tilde H^*_2
    \end{equation}
    where $\tilde H^*_2$ is the image of $H^*_2\to H^*(X_U,M_{\mu,\C}).$ 
    Submodules of semisimple modules are semisimple, so $H^*_{!}(X_U,M_{\mu,\C})$ is a semisimple $\mathcal H_\C$-module and part \ref{semisimple} follows.
    
    For part \ref{isotypic}, recall that discrete automorphic representations are isobaric (as they are Speh representations), so they satisfy strong multiplicity one, so $H_2^*[(\pi^{\infty,S})^{U^S}]=H^*(\mathfrak g,K_\infty^\circ,\pi_\infty\otimes_\C M_{\mu,\C})\otimes_\C (\pi^\infty)^U$ by \eqref{eq:H_2 decomp}.
    But this is also the $(\pi^{\infty,S})^{U^S}$-isotypic component of the cuspidal cohomology by equation \eqref{eq: cusp isotypic}.
    Part \ref{isotypic} now follows from \eqref{eq:3 injections}.

    For part \ref{reduced}, assume $U^S:=\GL_n(\prod_{v\notin S\cup\{\infty\}}\O_{F_v})$. By Satake isomorphism, $\mathcal H_\C$ is commutative. Thus, $(\Pi^{\infty,S})^{U^S}$ is $0$-dimensional or $1$-dimensional. Hence each element of $\mathcal H_\C$ acts on $(\Pi^{\infty,S})^{U^S}$ by a scalar. The result now follows from part \ref{semisimple}.
\end{proof}

\section{Abstract congruence ideals}\label{sec: congruence module}
In this section, we will define congruence ideals as in \cite[section 2.1]{tu}. We will also establish some of their properties in the abstract algebraic setting. These will be applied to the Hecke algebras and cohomology of locally symmetric space in the next section.

Let $\mathcal O$ be a complete discrete valuation ring with uniformizer $\varpi$ and field of fractions $E$.
Let $T$ be a reduced finite flat local $\mathcal O$-algebra, $\lambda:T\to \mathcal O$ be an $\mathcal O$-algebra homomorphism.
This, being a section of the structure map $\mathcal O\to T$, is necessarily surjective.
We first recall a standard concept, which already appeared in \cite{wiles_modular_1995}:
\begin{definition}\label{defn: Wiles defn of eta}
    $\eta_\lambda:=\lambda(Ann_T (\ker\lambda))$.
\end{definition}
It turns out that to study $\eta_\lambda$, it is useful to generalise this concept to modules over $T$.

Let $M$ be a finitely generated $T$-module which is free over $\mathcal O$.
Write $M_E=M\otimes_{\mathcal O}E$ and $T_E=T\otimes_{\mathcal O}E$.
Note that $M\hookrightarrow M_E$ and $T\hookrightarrow T_E$ by $\O$-flatness of $T$ and $M$.
Also, $\lambda$ induces an $E$-algebra map $\lambda_E:T_E\to E$.
Note that $T_E$ is a finite dimensional $E$-vector space, so it is Artinian.
It follows that 
\begin{align}
    T_E&\cong \prod_{\p\in Spec T_E} (T_E)_{\p} \label{eq:str}\\
    &\cong \prod_{\p\in Spec T_E} T_E/\p \label{eq:loc}\\
    &\cong E\times \prod_{\p\neq ker \lambda_E} T_E/\p\label{eq:isom}
\end{align}
Here, \eqref{eq:str} is given by the diagonal map;
\eqref{eq:loc} is true as $T_E$ is reduced;
\eqref{eq:isom} is true by the first isomorphism theorem.
The upshot is that we have a canonical decomposition
\[T_E\cong E\times T_E^c\]
of $E$-algebras, where the first projection is given by $\lambda_E$.

Let $e_\lambda=e\in T_E$ be the element corresponding to $(1,0)\in E\times T_E^c$.
In other words, $e$ is the unique element of $T_E$ such that $\lambda_E(e)=1$ and $e\in \bigcap_{\p\neq \ker\lambda_E}\p$.
Define two $T$-submodules of $M_E$ by
\[M^\lambda:=eM\]
and
\[M_\lambda:=eM\cap M = M[\ker\lambda]\]
where the last equality is proved in lemma \ref{lem: M_lambda}.
In \cite[section 2.1]{tu}, $M_\lambda$ is defined as $eM_E\cap M$, but it is equivalent to our definition, because if $m\in eM_E\cap M$, then $m=em\in M\cap eM$.
\begin{definition}\label{defn: congruence module}
    Define the \emph{congruence module} $C_0^\lambda(M)$ by 
    \[C_0^\lambda(M):=\frac{M^\lambda}{M_\lambda}\]
    and the \emph{congruence ideal} to be its Fitting ideal
    \[\eta_\lambda(M):=Fitt_{\mathcal O}(C_0^\lambda(M)).\]
\end{definition}

Note that $C_0^\lambda(M)$ is a finite torsion $\O$-module, so $\eta^\lambda(M)$ is completely determined by the cardinality of $C_0^\lambda(M)$. More precisely, if $C_0^\lambda(M)$ has cardinality $|\O/\varpi|^a$, then $\eta_\lambda(M)=(\varpi^a)$.

To see how definitions \ref{defn: Wiles defn of eta}, \ref{defn: congruence module} are related, note:
\begin{lemma}\label{lem: M_lambda}
    Let $M$ and $T$ be as above.
    \begin{enumerate}
        \item \label{M_lambda}$M_\lambda=M[\ker\lambda]:=\{m\in M:(\ker\lambda) m=0\}$.
        \item \label{wiles and tu}$\eta_\lambda(T)=\eta_\lambda$.
    \end{enumerate}
\end{lemma}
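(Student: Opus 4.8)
The plan is to prove the two parts in the order stated, with part \eqref{M_lambda} doing most of the work and part \eqref{wiles and tu} following by specialising $M = T$.

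For part \eqref{M_lambda}, I want to show $M_\lambda := eM \cap M = M[\ker\lambda]$. For the inclusion $M[\ker\lambda] \subseteq eM \cap M$: take $m \in M$ with $(\ker\lambda)m = 0$. It suffices to show $m \in eM_E$, since then $m \in eM_E \cap M = eM \cap M$ (as noted in the text, $m = em$ forces $m \in eM$). Working inside $M_E = M^\lambda \oplus (1-e)M_E$, write $m = em + (1-e)m$; I must show $(1-e)m = 0$. The element $1-e$ corresponds to $(0,1) \in E \times T_E^c$ under the decomposition $T_E \cong E \times T_E^c$ with first projection $\lambda_E$, so $\lambda_E(1-e) = 0$, i.e. $1-e \in \ker\lambda_E$. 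Now $\ker\lambda_E = (\ker\lambda)\otimes_\O E$ (since $\lambda_E$ is the base change of $\lambda$ and $E$ is flat over $\O$), so $1-e$ lies in the $E$-span of $\ker\lambda$; writing $1-e = \sum a_i x_i$ with $x_i \in \ker\lambda$, $a_i \in E$, we get $(1-e)m = \sum a_i (x_i m) = 0$ since $x_i m = 0$ in $M_E$. Hence $m = em \in eM_E \cap M$. For the reverse inclusion $eM \cap M \subseteq M[\ker\lambda]$: if $m = em'$ for some $m' \in M$ and also $m \in M$, then for any $x \in \ker\lambda$ we have $x m = x e m' = (xe) m'$, and $xe = 0$ in $T_E$ because $x \in \ker\lambda \subseteq \ker\lambda_E = \bigcap_{\p \neq \ker\lambda_E}\p$ would give... more carefully: $e = (1,0)$ and $x$ maps to $(\lambda(x), \ast) = (0, \ast)$ in $E \times T_E^c$, so $xe = (0,0) = 0$ in $T_E$, hence $xm = 0$ in $M_E$, and since $M \hookrightarrow M_E$, also $xm = 0$ in $M$. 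Thus $m \in M[\ker\lambda]$.

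For part \eqref{wiles and tu}, apply the definitions with $M = T$. We have $T^\lambda = eT$, $T_\lambda = eT \cap T = T[\ker\lambda]$ by part \eqref{M_lambda}, and $C_0^\lambda(T) = eT/(eT\cap T)$, so $\eta_\lambda(T) = \mathrm{Fitt}_\O(eT/(eT\cap T))$. I want to identify this with $\eta_\lambda = \lambda(\mathrm{Ann}_T(\ker\lambda))$. Note $T[\ker\lambda] = \mathrm{Ann}_T(\ker\lambda)$ by definition, and $\lambda$ restricted to this ideal is injective: if $a \in \mathrm{Ann}_T(\ker\lambda)$ then $a = ea$ (by the argument in part \eqref{M_lambda} applied to $M=T$, since $(\ker\lambda)a = 0$), so $a$ is determined by its image $\lambda_E(a) = \lambda(a)$ under the projection $T_E \to E$; hence $\mathrm{Ann}_T(\ker\lambda) \xrightarrow{\ \sim\ } \lambda(\mathrm{Ann}_T(\ker\lambda)) = \eta_\lambda$ as $\O$-modules, via $a \mapsto ea \mapsto \lambda(a)$. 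Under this identification $eT \cap T = T[\ker\lambda]$ maps isomorphically onto $\eta_\lambda \subseteq \O$. It remains to see that the quotient $eT/(eT \cap T)$ is isomorphic as an $\O$-module to $\O/\eta_\lambda$: the projection $\lambda_E: T_E \to E$ carries $eT$ into $E$ and is injective on $eT$ (since $e$ acts as $1$ on $eT$ and the complementary component vanishes), with image $\lambda_E(eT) = \lambda(T) = \O$ (as $\lambda$ is surjective), and it carries $eT \cap T$ onto $\eta_\lambda$; therefore $C_0^\lambda(T) \cong \O/\eta_\lambda$, whose Fitting ideal is exactly $\eta_\lambda$. This gives $\eta_\lambda(T) = \eta_\lambda$.

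The main obstacle is bookkeeping around the idempotent $e$ and the identifications $\ker\lambda_E = (\ker\lambda)\otimes_\O E$ and $\lambda_E|_{eT_E}$ being an isomorphism onto $E$; once these are pinned down, both parts are essentially formal. A minor point to handle with care is the claim $\lambda_E(eT) = \O$ rather than merely a fractional ideal: this uses that $e \cdot t \mapsto \lambda(t)$ and $\lambda$ is surjective onto $\O$, so no denominators appear.
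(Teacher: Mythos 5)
Your proof is correct and follows essentially the same route as the paper: part (1) comes down to the identity $eM_E = M_E[\ker\lambda_E]$ (which you verify by the two inclusions using $1-e \in \ker\lambda_E = (\ker\lambda)\otimes_\O E$ and $xe=0$ for $x\in\ker\lambda$, while the paper phrases it via the abstract product decomposition $E\times T_E^c$ acting on $M_E$), and part (2) identifies $C_0^\lambda(T)\cong\O/\lambda(T[\ker\lambda])$ via the projection $\lambda_E$ exactly as the paper does. The only blemish is the false start "$\ker\lambda_E=\bigcap_{\p\neq\ker\lambda_E}\p$" in the reverse inclusion, which you immediately abandon in favour of the correct computation $xe=(0,\ast)\cdot(1,0)=0$.
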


\begin{proof}
    For \eqref{M_lambda}, we first consider a slightly more general setup.
    Suppose we have a product of commutative rings $A\times B$ acting on $N$.
    We can decompose $N$ into $N_1\times N_2$ accordingly.
    It is then clear that $N_1=N[\ker \pi_A]$, where $\pi_A:A\times B\to A$ is the first projection.
    
    In our case, taking $A\times B=E\times T_E^c$ and $N=M_E$ shows $e M_E=M_E[\ker\lambda_E]$.
    Hence $M_\lambda=e M_E \cap M=\{m\in M : m\otimes 1\in M_E \text{ is annihilated by }(\ker\lambda)\otimes_\O E\}=M[\ker\lambda]$ since $M$ is a free $\O$-module.

    For \eqref{wiles and tu}, note that under the $\O$-module isomorphism $T_E\cong E\times T^c_E$, the $\O$-module $T^\lambda=eT$ corresponds to $\lambda(T)\times 0=\O\times 0$ while $T_\lambda=T[\ker\lambda]$ corresponds to $\lambda(T[\ker\lambda])\times 0$.
    Hence, $C_0^\lambda(T)=\mathcal O/\lambda(T[\ker\lambda])$ and $\eta_\lambda(M)=\lambda(T[\ker\lambda])=\eta_\lambda$.
\end{proof}

Observe that $M^\lambda=eM \subset M_E$ is torsion free and finitely generated over $\O$, so it is a free $\O$-module.
The same is true for $M_\lambda=M[\ker\lambda]$.
\begin{lemma}\label{lem: M lambda rk 1}(cf. \cite[equation (2.2)]{tu})
    If $rk_\O M_\lambda=1$,
    then $C_0^\lambda(M)=\mathcal O/\eta_\lambda(M)$ and $\eta_\lambda(M)\supset \eta_\lambda$. 
\end{lemma}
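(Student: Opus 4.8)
The plan is to reduce both assertions to the structure of cyclic modules over the discrete valuation ring $\O$. First I would recall (as observed just before the lemma) that $M^\lambda=eM$ and $M_\lambda=M[\ker\lambda]$ are finitely generated and torsion free over $\O$, hence free; and since $C_0^\lambda(M)=M^\lambda/M_\lambda$ is a finite torsion $\O$-module (the remark after Definition~\ref{defn: congruence module}), the modules $M_\lambda\subseteq M^\lambda$ have the same $\O$-rank, which by hypothesis is $1$. Choosing an $\O$-basis $g$ of $M^\lambda$, we get $C_0^\lambda(M)\cong \O/I$ where $I:=\{r\in\O: rg\in M_\lambda\}=Ann_\O C_0^\lambda(M)$, and therefore $\eta_\lambda(M)=Fitt_\O(\O/I)=I$. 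This already yields the first claim, $C_0^\lambda(M)=\O/\eta_\lambda(M)$.

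For the inclusion $\eta_\lambda(M)\supset\eta_\lambda$, it suffices to show $\lambda(a)\,M^\lambda\subseteq M_\lambda$ for every $a\in Ann_T(\ker\lambda)$, since then $\lambda(a)\in I=\eta_\lambda(M)$ and, letting $a$ range over $Ann_T(\ker\lambda)$, $\eta_\lambda=\lambda(Ann_T(\ker\lambda))\subseteq\eta_\lambda(M)$. The key point is that $a=\lambda(a)\,e$ in $T_E$. Indeed, surjectivity of $\lambda$ gives a short exact sequence $0\to\ker\lambda\to T\to\O\to 0$, and tensoring with the flat $\O$-algebra $E$ identifies $\ker\lambda_E$ with $(\ker\lambda)\otimes_\O E$; under the decomposition $T_E=E\times T_E^c$ this is exactly $0\times T_E^c=(1-e)T_E$. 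Hence $a\cdot\ker\lambda=0$ in $T$ forces $a(1-e)T_E=0$, so $a(1-e)=0$, i.e.\ $a=ae=\lambda(a)e$, using the identity $xe=\lambda_E(x)e$ valid for all $x\in T_E$ (immediate from $e=(1,0)\in E\times T_E^c$). Consequently, for any $m\in M$ we have $am=\lambda(a)\,em$; the left side lies in $M$ because $a\in T$ and $m\in M$, while the right side lies in $eM=M^\lambda$, so $\lambda(a)\,em\in M^\lambda\cap M=M_\lambda$. As every element of $M^\lambda$ is of the form $em$ with $m\in M$, this gives $\lambda(a)\,M^\lambda\subseteq M_\lambda$, completing the argument.

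I do not anticipate a genuine obstacle. The only points requiring a little care are the identification $\ker\lambda_E=(1-e)T_E$ (via flatness of $E$ over $\O$ together with the canonical product decomposition of $T_E$) and the fact that the idempotent relation $xe=\lambda_E(x)e$ holds on all of $T_E$; everything else is formal bookkeeping with $e$, the rank hypothesis, and the Fitting ideal of a cyclic $\O$-module.
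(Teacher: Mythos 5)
Your proof is correct. The paper's argument uses the same starting point -- from $rk_\O M_\lambda=1$ and torsionness of the quotient it gets $rk_\O M^\lambda=1$, hence $eM=\O\, em$ for some $m\in M$ -- but then packages both conclusions into a single surjection of $\O$-modules $C_0^\lambda(T)=eT/T[\ker\lambda]\twoheadrightarrow C_0^\lambda(M)$, $x\mapsto xm$, and invokes the computation $C_0^\lambda(T)=\O/\eta_\lambda$ from \cref{lem: M_lambda}: a cyclic quotient of $\O/\eta_\lambda$ is $\O/I$ with $I\supseteq\eta_\lambda$, which is the whole lemma at once. You instead split the statement in two, proving cyclicity directly from the rank-one freeness of $M^\lambda$ and then verifying $\eta_\lambda\subseteq Ann_\O(C_0^\lambda(M))$ element by element via the identity $a=\lambda(a)e$ for $a\in Ann_T(\ker\lambda)$. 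That identity is precisely the content of the identification of $T[\ker\lambda]$ with $\lambda(T[\ker\lambda])\times 0$ under $T_E\cong E\times T_E^c$ used in \cref{lem: M_lambda}, so the two proofs rest on the same underlying facts; the paper's version is slightly more economical because the surjection delivers both conclusions simultaneously, while yours makes explicit the mechanism by which $Ann_T(\ker\lambda)$ annihilates the congruence module, at the cost of re-deriving part of \cref{lem: M_lambda}.
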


\begin{proof}
    We know that $M^\lambda/M_\lambda$ is a finite torsion $\O$-module, so $rk_\O (M^\lambda)=rk_\O (M_\lambda)=1$.
    Thus, there exists $m\in M$ such that $eM=\O em$ as $\O$-module.
    Then we have a surjection of $\O$-modules
    \begin{align*}
        C_0^\lambda(T)=\frac{eT}{T[\ker\lambda]} &\twoheadrightarrow C_0^\lambda (M)=\frac{eM}{M[\ker\lambda]}\\
        x&\mapsto xm.
    \end{align*}
    The lemma now follows from the observation that $C_0^\lambda(T)=\O / \eta_\lambda$.
\end{proof}

\begin{lemma}\label{lem: pp}
    Let $T,\lambda, e$ be as above.
    Let $\tilde T$ be a finite flat local $\O$-algebra, $\tilde\lambda:\tilde T\to \O$ be an $\O$-algebra homomorphism, and $\tilde e$ be the corresponding idempotent in $\tilde T$.
    Let $M_1,M_2$ be $T$-module and $\tilde T$-module respectively that are finite free over $\O$.
    Suppose that there is an $\O$-bilinear perfect pairing\footnote{This means the two induced maps $M_1\to \Hom_\O(M_2,\O)$ and $M_1\to \Hom_\O(M_2,\O)$ are isomorphisms.}
    \[[\,,\,]:M_1\times M_2 \to \O\]
    such that\footnote{We also write $[\,,\,]$ for its extension $(M_1)_E\times (M_2)_E \to E$.}
    $[eM_1,(1-\tilde e)M_2]=0$ and $[(1-e)M_1,\tilde eM_2]=0$.
    \begin{enumerate}[(a)]
        \item \label{E/O}Then $[\,,\,]$ induces an $\O$-bilinear perfect pairing 
        \[C_0^\lambda(M_1)\times C_0^{\tilde\lambda}(M_2) \to E/\O\]
        and $\eta_\lambda(M_1)=\eta_{\tilde\lambda}(M_2)$.
        \item \label{pair basis} If $M_1[\ker\lambda]$ and $M_2[\ker\tilde\lambda]$ are both free $\O$-modules of rank 1 with respective bases $\delta_1,\delta_2$, then 
        \[\eta_\lambda(M_1)=\eta_{\tilde\lambda}(M_2)=([\delta_1,\delta_2]).\]
    \end{enumerate}
\end{lemma}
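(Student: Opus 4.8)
The plan is to first set up the dual picture via the perfect pairing. Extending $[\,,\,]$ to $(M_1)_E \times (M_2)_E \to E$, the hypotheses $[eM_1,(1-\tilde e)M_2]=0$ and $[(1-e)M_1,\tilde e M_2]=0$ say that under the decomposition $(M_i)_E = e_i (M_i)_E \oplus (1-e_i)(M_i)_E$ (writing $e_1=e$, $e_2=\tilde e$), the two summands of $(M_1)_E$ pair to zero with the ``wrong'' summands of $(M_2)_E$. Since the pairing on $(M_1)_E\times(M_2)_E$ is perfect (being an extension of scalars of a perfect pairing), it follows that $[\,,\,]$ restricts to a perfect pairing $e(M_1)_E \times \tilde e(M_2)_E \to E$, i.e. $e(M_1)_E$ and $\tilde e(M_2)_E$ are $E$-dual; in particular $rk_\O M^\lambda_1 = rk_\O M^{\tilde\lambda}_2$, and likewise $rk_\O (M_1)_\lambda = rk_\O (M_2)_{\tilde\lambda}$.

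For part \ref{E/O}, I would argue as follows. The perfect $\O$-pairing $M_1\times M_2\to\O$ identifies $M_2 \cong \Hom_\O(M_1,\O)=:M_1^\vee$ as $T$-modules (letting $\tilde T$ act on $M_1^\vee$ through the pairing; one checks the adjunction makes this $T$-linear after matching $\lambda,\tilde\lambda$). Under this identification, I claim $M^{\tilde\lambda}_2 = \tilde e M_2$ corresponds to $(M_1/M_{1,\lambda})^\vee \subset M_1^\vee$ and $M_{2,\tilde\lambda}=M_2[\ker\tilde\lambda]$ corresponds to $(M_1/M^\lambda_1)^\vee$ — both coming from dualising the inclusions $M_{1,\lambda}\hookrightarrow M^\lambda_1 \hookrightarrow M_1$ and using that $M^{\vee}$ of a free quotient sits inside $M_1^\vee$. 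Granting this, $C_0^{\tilde\lambda}(M_2) = M_2^{\tilde\lambda}/M_{2,\tilde\lambda}$ is identified with $\ker\!\big((M_1/M_{1,\lambda})^\vee \to (M^\lambda_1)^\vee\big)$ modulo nothing, which by the long exact sequence of $\Hom_\O(-,\O)$ applied to $0\to M^\lambda_1/M_{1,\lambda}\to M_1/M_{1,\lambda}\to M_1/M^\lambda_1\to 0$ is $\mathrm{Ext}^1_\O(M^\lambda_1/M_{1,\lambda},\O) = \mathrm{Ext}^1_\O(C_0^\lambda(M_1),\O)$. Since $C_0^\lambda(M_1)$ is a finite torsion $\O$-module, $\mathrm{Ext}^1_\O(C_0^\lambda(M_1),\O)\cong \Hom_\O(C_0^\lambda(M_1),E/\O)$, which is the Pontryagin dual; this gives the perfect pairing $C_0^\lambda(M_1)\times C_0^{\tilde\lambda}(M_2)\to E/\O$. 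Finite $\O$-modules in perfect $E/\O$-duality have the same cardinality, hence (by the remark after \Cref{defn: congruence module} that $\eta_\lambda(M)$ is determined by $\#C_0^\lambda(M)$) we get $\eta_\lambda(M_1)=\eta_{\tilde\lambda}(M_2)$.

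For part \ref{pair basis}, the rank hypothesis forces $rk_\O M^\lambda_1 = rk_\O M_{1,\lambda}=1$ and similarly for $M_2$, so $M^\lambda_1 = \O\,m_1$ for some $m_1\in (M_1)_E$ with $\delta_1 = c_1 m_1$, and $M^{\tilde\lambda}_2=\O\,m_2$ with $\delta_2 = c_2 m_2$, where $(\,c_i\,)$ generates the relevant index. Since $e(M_1)_E$ and $\tilde e(M_2)_E$ are perfectly $E$-paired one-dimensional spaces, $[m_1,m_2]$ is a unit in $E$, and rescaling we may arrange $[m_1,m_2]=1$; then $[\delta_1,\delta_2]=c_1 c_2$. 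On the other hand $C_0^\lambda(M_1) = M^\lambda_1/M_{1,\lambda} = \O m_1 / \O \delta_1 \cong \O/(c_1)$ and similarly $C_0^{\tilde\lambda}(M_2)\cong\O/(c_2)$, so by part \ref{E/O} $(c_1)=\eta_\lambda(M_1)=\eta_{\tilde\lambda}(M_2)=(c_2)$; hence $\eta_\lambda(M_1) = (c_1 c_2) = ([\delta_1,\delta_2])$ (using that $(c_1)=(c_2)$ implies $(c_1)=(c_1 c_2)$ only when $c_2$ is a unit — wait, this needs care). I expect the main obstacle to be exactly this last bookkeeping: one must track carefully whether $\eta_\lambda(M_1)$ equals $(c_1)$ or $(c_1 c_2)$, i.e. whether the congruence ideal is read off from the quotient $M^\lambda_1/M_{1,\lambda}$ alone or picks up a contribution from both sides. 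The clean way is to avoid choosing $[m_1,m_2]=1$: instead note $\delta_1$ generates $M_{1,\lambda}$ and $\delta_2$ generates $M_{2,\tilde\lambda}$, and the pairing $C_0^\lambda(M_1)\times C_0^{\tilde\lambda}(M_2)\to E/\O$ sends the class of $m_1$ and the class of $m_2$ to a generator of $(\eta_\lambda(M_1))^{-1}\O/\O$; expressing $\delta_i$ in terms of $m_i$ and using nondegeneracy, $[\delta_1,\delta_2] = [m_1,m_2]\cdot(\text{generator of }\eta_\lambda(M_1))\cdot(\text{unit})$ after matching the two presentations, with $[m_1,m_2]$ a unit — so $([\delta_1,\delta_2]) = \eta_\lambda(M_1)$. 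I would write this out via the explicit identification $M_2\cong M_1^\vee$ so that $\delta_2$ is forced to be (a unit times) the functional dual to a generator of $M_1/M^\lambda_1$, making the computation of $[\delta_1,\delta_2]$ mechanical.
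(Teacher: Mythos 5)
Your overall strategy for part (a) --- identify $M_2$ with $\Hom_\O(M_1,\O)$ and produce the $E/\O$-pairing from $\mathrm{Ext}^1_\O(C_0^\lambda(M_1),\O)\cong\Hom_\O(C_0^\lambda(M_1),E/\O)$ --- is a legitimate alternative to the paper's more direct argument, but the bookkeeping you propose does not work as stated. First, the short exact sequence $0\to M^\lambda_1/M_{1,\lambda}\to M_1/M_{1,\lambda}\to M_1/M^\lambda_1\to 0$ presupposes $M_1^\lambda=eM_1\subseteq M_1$, which holds only when $C_0^\lambda(M_1)=0$; the usable sequence is $0\to M_1\to M_1+eM_1\to C_0^\lambda(M_1)\to 0$ inside $(M_1)_E$. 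Second, the claimed correspondences are wrong on ranks: $\tilde e(M_2)_E$ is the annihilator of $(1-e)(M_1)_E$, so $\tilde eM_2$ has $\O$-rank equal to $\mathrm{rk}_\O\,M_{1,\lambda}$, whereas $(M_1/M_{1,\lambda})^\vee$ has rank $\mathrm{rk}_\O\,M_1-\mathrm{rk}_\O\,M_{1,\lambda}$; already for $T=\tilde T=\O$, $M_1=M_2=\O$ with the identity pairing one has $\tilde eM_2=\O\neq 0=(M_1/M_{1,\lambda})^\vee$. (The module cut out by ``vanishing on $M_{1,\lambda}$'' is related to $(1-\tilde e)$, not to $\tilde e$.) The paper avoids all of this: it checks directly that $[em_1,\tilde em_2]\in\O$ whenever $m_1\in M_1$ and $\tilde em_2\in M_2\cap\tilde eM_2$, so the pairing descends to the congruence modules, and proves non-degeneracy by showing that if $[em_1,M_2]\subseteq\O$ then $[em_1,-]$ is represented by an element of $M_1$, forcing $em_1\in M_1\cap eM_1$; perfectness then follows from the cardinality remark you cite.

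In part (b) the gap is the one you flag yourself and do not resolve. Rescaling $m_1$ so that $[m_1,m_2]=1$ destroys the normalization $\delta_1=c_1m_1$ with $(c_1)=\eta_\lambda(M_1)$, so the computation $[\delta_1,\delta_2]=c_1c_2$ is circular; and your second sketch asserts both that $[m_1,m_2]$ maps to a generator of $\eta_\lambda(M_1)^{-1}\O/\O$ and that it is a unit, which are incompatible unless $\eta_\lambda(M_1)=\O$. The missing input is exactly the perfectness from part (a): writing $\eta_\lambda(M_1)=(\varpi^a)$, the induced perfect pairing on $C_0^\lambda(M_1)\times C_0^{\tilde\lambda}(M_2)\cong\O/\varpi^a\times\O/\varpi^a$ with generators the classes of $m_1,m_2$ forces $[m_1,m_2]\in\varpi^{-a}\O^\times$ (it is \emph{not} an $\O$-unit), and then
\[ [\delta_1,\delta_2]=c_1c_2[m_1,m_2]\in\varpi^{a}\cdot\varpi^{a}\cdot\varpi^{-a}\O^\times=\varpi^a\O^\times, \]
which is the paper's computation. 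With these two corrections your outline can be completed, but as written both halves contain genuine errors.
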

The key observation is that for every finite torsion $\O$-module $N$, we have a (non-canonical) isomorphism $N\cong \Hom_\O(N,E/\O)$.
Hence, by considering the cardinalities of $C_0^\lambda(M_1)$ and $C_0^{\tilde\lambda}(M_2)$, we know the pairing in \ref{E/O} is perfect if and only if it is non-degenerate.

This lemma appears similar to \cite[Proposition 2.3]{tu}, but a key difference is that we do not assume $[tx,y]=[x,ty]$ for all $t\in T$. Instead, our analogous conditions are $[eM_1,(1-\tilde e)M_2]=0$ and $[(1-e)M_1,\tilde eM_2]=0$. This distinction is important because we will later apply this lemma to the cup product, which satisfies our conditions but not theirs. In the $\GL_2$ setting, they twisted the pairing by the Atkin-Lehner involution to make their conditions hold, but we are unaware of any such involution for $\GL_n$.

\begin{proof}
    It is easy to see that $[\,,\,]$ extends to an $E$-bilinear perfect pairing $[\,,\,]:(M_1)_E\times (M_2)_E \to E$.
    Let $em_1\in eM_1$ (with $m_1\in M_1$) and $\tilde em_2\in M_2\cap \tilde eM_2$ (with $m_2\in M_2$).
    Then $[em_1,\tilde em_2]=[em_1+(1-e)m_1,\tilde em_2]=[m_1,\tilde em_2]\in \O$ because $(m_1,\tilde em_2)\in M_1\times M_2$.
    A symmetric consideration shows $[\,,\,]$ induces a map
    \begin{equation*}
        \langle\,,\,\rangle:C_0^\lambda(M_1)\times C_0^{\tilde\lambda}(M_2) \to E/\O.
    \end{equation*}

    To show $\langle\,,\,\rangle$ is non-degenerate, we let $m_1\in M_1$. 
    Suppose that $\langle em_1,-\rangle\in \Hom_\O(C_0^{\tilde\lambda}(M_2),E/\O)$ is zero.
    This means that for all $m_2\in M_2$, 
    \[[em_1,\tilde em_2]=[em_1,\tilde em_2+(1-\tilde e)m_2]=[em_1,m_2]\in\O\]
    so $[em_1,-]\in \Hom_\O(M_2,\O)$.
    By perfectness, there exists $n\in M_1$ such that $[em_1,-]=[n,-]$, so $n=em_1\in eM_1\cap M_1=(M_1)_\lambda$ by perfectness again, so $em_1=0\in C_0^\lambda(M_1)$.
    By symmetry, $\langle\, , \, \rangle$ is non-degenerate and hence perfect.
    Also, $C_0^\lambda(M_1)\cong \Hom_\O(C_0^{\tilde\lambda}(M_2),E/\O)\cong C_0^{\tilde\lambda}(M_2).$

    For part \ref{pair basis}, let $(\varpi^a):=\eta_\lambda(M_1)=\eta_{\tilde\lambda}(M_2)$.
    By lemma \ref{lem: M lambda rk 1}, $C_0^\lambda(M_1)$ and  $C_0^{\tilde\lambda}(M_2)$ are free $\O/(\varpi^a)$-modules of rank 1, with respective bases $b_1,b_2$ say.
    By part \ref{E/O}, we have isomorphism
    \begin{align*}
        \frac{\O}{(\varpi^a)} \cong \frac{\O}{(\varpi^a)}b_1\xrightarrow{\sim} \Hom_\O\left(\frac{\O}{(\varpi^a)}b_2,E/\O\right)&\xrightarrow{\sim}\frac{\varpi^{-a}\O}{\O}\xrightarrow{\cdot\varpi^a}\frac{\O}{(\varpi^a)}\\
        f&\mapsto f(b_2).
    \end{align*}
    This isomorphism maps $1$ to $[b_1,b_2]\varpi^a \pmod {\varpi^a}$, so $[b_1,b_2]\varpi^a\in \O^\times$.
    As $\O b_i/\O\delta_i \cong \O/(\varpi^a)$, we know $\delta_i\in \varpi^a b_i\O^\times$. We deduce that $[\delta_1,\delta_2]\in \varpi^a\O^\times$, as desired.
\end{proof}

The following lemma explains why $\eta_\lambda$ is called the congruence ideal:
\begin{lemma}\label{lem:congruence}
    Let $E$ be a non-Archimedean local field with ring of integers $\O$, $T$ be a reduced finite flat local $\O$-algebra, $\lambda:T\to \O$ be an $\O$-algebra homomorphism. Then $\eta_\lambda\neq \O$ if and only if there is a finite field extension $L$ of $E$ and an $\O$-algebra homomorphism $\lambda':T\to \O_{L}$ such that (viewing $\lambda$ as a homomorphism to $\O_L$) we have 
    $\lambda\neq \lambda'$ and
    \[\lambda\equiv \lambda' \pmod{\varpi_L}.\]
    Here $\O_L$ is the ring of integers of $L$ and $\varpi_L$ is a uniformizer of $\O_L$.
\end{lemma}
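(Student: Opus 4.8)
The plan is to treat the two implications separately, using only the definition $\eta_\lambda=\lambda(\mathrm{Ann}_T(\ker\lambda))$ together with the fact that, $T$ being reduced, finite and flat over $\O$, the algebra $T_E:=T\otimes_\O E$ is a reduced Artinian $E$-algebra, hence a finite product of finite field extensions of $E$. For the implication that the existence of such a $\lambda'$ forces $\eta_\lambda\neq\O$ I would argue contrapositively: assume $\eta_\lambda=\O$, so there is $t_0\in\mathrm{Ann}_T(\ker\lambda)$ with $\lambda(t_0)=1$. For every $t\in T$ the element $t-\lambda(t)\,1_T$ lies in $\ker\lambda$ and hence is killed by $t_0$, i.e.\ $t_0 t=\lambda(t)\,t_0$ in $T$. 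Applying any $\O$-algebra homomorphism $\lambda':T\to\O_L$ (for $L/E$ finite) with $\lambda'\equiv\lambda\pmod{\varpi_L}$ gives $\lambda'(t_0)\lambda'(t)=\lambda(t)\lambda'(t_0)$; since $\lambda'(t_0)\equiv\lambda(t_0)=1\pmod{\varpi_L}$, the element $\lambda'(t_0)$ is a unit of $\O_L$, and cancelling it yields $\lambda'(t)=\lambda(t)$ for all $t$, so $\lambda'=\lambda$. Thus no $\lambda'\neq\lambda$ congruent to $\lambda$ can exist. (This direction does not use that $T$ is local.)

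For the converse, assume $\eta_\lambda\neq\O$. First, $\ker\lambda\neq 0$: otherwise $\mathrm{Ann}_T(\ker\lambda)=T$ and $\eta_\lambda=\lambda(T)=\O$ by surjectivity of $\lambda$. By $\O$-flatness, $\ker(\lambda\otimes_\O E)=\ker\lambda\otimes_\O E\neq 0$; this is a maximal ideal of $T_E$ with residue field $E$, so since $T_E$ is reduced and Artinian but not local, it has a second maximal ideal $\p$, with residue field a finite extension $L:=T_E/\p$ of $E$. Let $\lambda'$ be the composite $T\hookrightarrow T_E\twoheadrightarrow L$. As $T$ is integral over the integrally closed ring $\O$, the image $\lambda'(T)$ consists of elements of $L$ integral over $\O$, so $\lambda'$ takes values in $\O_L$.

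It then remains to check the two properties of $\lambda'$. For $\lambda\neq\lambda'$: the kernel of $T\hookrightarrow T_E\twoheadrightarrow T_E/J$ is $J\cap T$ for any maximal ideal $J$ of $T_E$, and tensoring the resulting short exact sequence with the flat $\O$-algebra $E$ recovers $J=(J\cap T)\otimes_\O E$; hence distinct maximal ideals of $T_E$ contract to distinct ideals of $T$, so $\ker(\lambda\otimes_\O E)\neq\p$ gives $\ker\lambda\neq\ker\lambda'$. For $\lambda\equiv\lambda'\pmod{\varpi_L}$: since $T/\varpi T$ is Artinian, $\ker(\lambda'\bmod\varpi_L)$ is a prime of $T$ containing $\varpi$, hence a maximal ideal, hence $\m$ by locality; and $\ker(\lambda\bmod\varpi_L)=\lambda^{-1}(\varpi_L\O_L\cap\O)=\lambda^{-1}(\varpi\O)=\m$. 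Both reductions therefore factor through $T/\m$, which equals $\O/\varpi$ because $\lambda$ is surjective, and there is a unique $\O$-algebra homomorphism $\O/\varpi\to\O_L/\varpi_L\O_L$; so the two reductions coincide, which is exactly $\lambda\equiv\lambda'\pmod{\varpi_L}$. I do not anticipate a genuine obstacle here; the only points needing care are the bookkeeping identifications $T/\m\cong\O/\varpi$ (which uses surjectivity of $\lambda$) and $\varpi_L\O_L\cap\O=\varpi\O$, together with the use of normality of $\O$ and the product decomposition of $T_E$ to land $\lambda'$ inside $\O_L$.
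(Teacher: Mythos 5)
Your proof is correct, but it takes a genuinely different route from the paper's in both directions. For the implication ``$\lambda'$ exists $\Rightarrow \eta_\lambda\neq\O$'', the paper first shows $\lambda'$ factors through $T^c$ and then derives $1\equiv 0$ from the idempotent $(1,0)$ lying in $T$; you instead take $t_0\in \mathrm{Ann}_T(\ker\lambda)$ with $\lambda(t_0)=1$, observe the identity $t_0t=\lambda(t)t_0$ in $T$, and cancel the unit $\lambda'(t_0)$ --- an argument that is more elementary and, as you note, does not even use that $T$ is local. For the converse, the paper builds the map $T^c\twoheadrightarrow \O\otimes_T T^c=\O/\eta_\lambda\twoheadrightarrow\O/\varpi$ and then lifts it to characteristic zero by the Deligne--Serre/going-down argument, with some care needed to see that $T^c/\p^c$ is local and that the inclusion into $\O_L$ is a local homomorphism; you bypass the lifting entirely by choosing a second maximal ideal $\p$ of the reduced Artinian ring $T_E$ (which exists because $\ker\lambda\neq0$) and projecting onto its residue field, using normality of $\O$ to land in $\O_L$, and then observing that the congruence $\lambda\equiv\lambda'\pmod{\varpi_L}$ is automatic since every $\O$-algebra map out of the local ring $T$ reduces through $T/\m\cong\O/\varpi$. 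What your argument buys is brevity and the fact that locality is isolated to exactly the two places it is needed (forcing the congruence, and making $\eta_\lambda\neq\O$ equivalent to $\ker\lambda\neq0$ --- though you only use the trivial implication $\eta_\lambda\neq\O\Rightarrow\ker\lambda\neq0$); what the paper's Deligne--Serre argument buys is a template that also works when one starts only from a residual eigensystem rather than from the full ring $T_E$. All the small verifications you flag (the identification $(J\cap T)\otimes_\O E=J$ by clearing denominators, $\varpi_L\O_L\cap\O=\varpi\O$, and $T/\m\cong\O/\varpi$ from surjectivity of $\lambda$) do go through.
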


\begin{remark}\label{remark: congruence}
    Recall that $T$ is finite over $\O$, so any $\O$-algebra homomorphism $T\to \overline E$ has image in $\O_L$ for some finite field extension $L$ of $E$. Thus, we can rephrase the lemma: $\eta_\lambda\neq \O$ if and only if there is an $\overline E$-algebra homomorphism $\lambda':T\otimes_{\O}\overline{E} \to \overline E$ such that $\lambda'\neq \lambda\otimes_{\O}\overline{E}$ and $|\lambda(t)-\lambda'(t)|<1$ for all $t\in T$, where $|\cdot|$ is an absolute value on $\overline E$ extending that of $E$.
\end{remark}

\begin{proof}
    Given $\lambda$, we can decompose $T\otimes_\O E\cong E\times T_E^c$ and get an idempotent $e\in T\otimes_\O E$ as before.
    Define $T^c:=im(T\to T\otimes_\O E\to T_E^c).$

    We assume that $\lambda'$ is as in the statement of the lemma.
    We claim that $\lambda'$ factors through $T\to T^c$.
    As $\O_L\subset \O_L\otimes_\O E$, it suffices to show 
    \[\lambda'_E:=\lambda'\otimes E: T\otimes_\O E\to \O_L\otimes_\O E\] factors through $T^c_E$.
    Note that $\O_L\otimes_\O E=L$ is an integral domain while $T\otimes_\O E\cong E\times T_E^c$, so $\lambda'_E$ must factor through $E$ or $T_E^c$.
    Since $\lambda'_E$ is an $E$-algebra homomorphism and $\lambda'_E \neq \lambda_E$, $\lambda'_E$ cannot factor through $E$. Hence $\lambda'_E$ must factor through $T_E^c$, as claimed. 
    Suppose $\eta_\lambda=\O$.
    This means $eT\cap T = eT$, i.e. $T\supset eT$.
    In particular, $(1,0)=e\cdot (1,1)\in T$.
    Then
    \begin{align*}
        1&=\lambda(1,0)\\
        &\equiv\lambda'(1,0)\pmod{\varpi_L}\\
        &\equiv 0\pmod\varpi
    \end{align*}   
    where the last equality holds since $\lambda'$ factors through $T^c$.
    We get a contradiction, so $\eta_\lambda\neq \O$.

    Conversely, suppose $\eta_\lambda\neq \O$.
    The key observation is that $\O/\eta_\lambda=\O\otimes_T T^c$, because $\ker(T\twoheadrightarrow T^c)=T\cap eT$.
    Thus we have an $\O$-algebra homomorphism
    $$f:T^c\twoheadrightarrow \O\otimes_T T^c=\O/\eta_\lambda \twoheadrightarrow \O/\varpi.$$
    We get a commutative diagram
\[\begin{tikzcd}
	& {\mathcal O} \\
	T && {\mathcal O/\varpi} \\
	& {T^c}
	\arrow[two heads, from=1-2, to=2-3]
	\arrow[two heads, from=2-1, to=1-2]
	\arrow[two heads, from=2-1, to=3-2]
	\arrow["f"', two heads, from=3-2, to=2-3]
\end{tikzcd}\]
We want to lift $f$.
There is a classical argument due to Deligne and Serre.
Let $\m^c:=\ker f$.
Consider the structure map $\O\to T^c$.
As $\varpi \in \m^c$, the prime ideal $\m^c$ lies above $\varpi\O$.
Note that multiplication by $\varpi$ is invertible in $T_E$, so $T_E$ and hence $T^c$ is $\O$-torsion free.
Thus, $T^c$ is a finite free $\O$-module.
By flatness, the going down property holds, so there is a prime ideal $\p^c\subset \m^c$ lying above $(0)$, so $\O\hookrightarrow T^c/\p^c$.
This is a finite extension, so $$L:= Frac(T^c/\p^c)$$ is a finite extension of $E$.
We know $T^c/\p^c$ is finite over $\O$ and hence integral over $\O$, so $$T^c/\p^c\subset \O_L.$$
We want to show that this inclusion is a local homomorphism of local rings.
As $\O$ is a Henselian ring and $T^c/\p^c$ is an integral domain, $T^c/\p^c$ is a local ring. 
Note $\m':=\varpi_L\O_L \cap T^c/\p^c$ is a prime ideal of $T^c/\p^c$ lying above $\varpi\O$ (since its pullback to $\O$ is the preimage of $\varpi_L \O_L$ under the $\O$-algebra map $\O\to T^c/\p^c \to \O_L$).
By the incomparability theorem for injective integral ring extensions, $\m'$ is a maximal ideal of $T^c/\p^c$.
Hence, the fact that $T^c/\p^c$ is a local ring implies $\m'=\m^c/\p^c$.
This gives us another commutative diagram
\[\begin{tikzcd}
	& {T^c/\mathfrak m^c} & {\mathcal O_L/\varpi_L} \\
	{T^c} & {T^c/\mathfrak p^c} & {\mathcal O_L}
	\arrow[from=1-2, to=1-3]
	\arrow["f", two heads, from=2-1, to=1-2]
	\arrow[two heads, from=2-1, to=2-2]
	\arrow[two heads, from=2-2, to=1-2]
	\arrow[hook, from=2-2, to=2-3]
	\arrow[two heads, from=2-3, to=1-3]
\end{tikzcd}\]
Combining the two diagrams gives us
\[\begin{tikzcd}
	& {\mathcal O} \\
	T &&& {\mathcal O_L/\varpi_L} \\
	& {T^c} & {\mathcal O_L}
	\arrow[from=1-2, to=2-4]
	\arrow["\lambda", from=2-1, to=1-2]
	\arrow[from=2-1, to=3-2]
	\arrow[from=3-2, to=3-3]
	\arrow[from=3-3, to=2-4]
\end{tikzcd}\]
Denote the bottom map $T\to\O_L$ by $\lambda'$.
It remains to show $\lambda \neq \lambda'$ as maps to $\O_L$.
If this is false, then for all $(x,y)\in T\subset \O\times T^c$, we have $x-y\in \p^c$ by construction of $\lambda'$.
In particular, if $(x,y)\in T\cap eT$, then $y=0$ and $x\in \O\cap \p^c=(0)$ as $\p^c$ lies above $(0)$.
This means $T\cap eT=0$, which is false as $\varpi^a e\in T\cap eT$ for $a\in \Z$ sufficiently large.
\end{proof}

\section{\texorpdfstring{$L(1,\pi,\Ad^\circ)$}{L(1,pi,Ad 0)} and congruences for automorphic representations}\label{sec:4}
In this section, we will apply the results from the last section to study congruence ideals of Hecke algebras and cohomology of locally symmetric spaces.
Then we will relate them to Selmer groups.

We shall need some notation and assumptions.
Let
\begin{itemize}\label{notations...}
    \item $F$ be a number field
    \item $p$ be a prime. Starting from section \ref{sec: cong ideals for aut reps}, we will also assume $$p>2.$$
    \item $\iota:\overline\Q_p\xrightarrow{\sim} \C$ be a fixed isomorphism. We will often use it implicitly.
    \item $\pi$ be a cuspidal, regular algebraic automorphic representation of $\GL_n(\A_F)$ of weight $\iota\mu$, where $\mu\in(\Z^n)^{\Hom(F,\overline\Q_p)}$
    \item $U_{v,m}=\{g\in \GL_n(\O_{F_v}): \text{the last row of } g\pmod{\varpi_v^{m}} \text{ is }\begin{pmatrix}
        0 & \dots & 0 & 1
    \end{pmatrix}
    \}$ for $m\in\Z_{\ge 0}$, where $\varpi_v$ is a uniformiser of $F_v$.
    Let $f_v\in \Z_{\ge 0}$ be the unique integer such that\footnote{Such a $f_v$ always exists by the uniqueness of local new vector \cite[Theorem 11.5.6]{Getz-Hahn}.} $$\dim_\C (\pi_v)^{U_{v,f_v}} =1.$$
    Let
    $U=\prod_{v\nmid\infty}U_v$, where $U_v=U_{v,f_v}$.
    Assume that $U$ is \textbf{neat}.
    \item $S$ be a finite set of finite places of $F$ containing all $v$ such that $\pi_v$ is ramified.
    \item $G^S=\GL_n(\A_F^{S,\infty})$ and $U^S:=\GL_n(\prod_{v\notin S\cup\{\infty\}}\O_{F_v})$.
    \item $E\subset \overline \Q_p$ be a local field containing $\iota^{-1}(\Q(\pi))$ \footnote{$\Q(\pi)$ is the field of rationality of $\pi$ \cite[section 3 page 101]{Clozel}.}and the image of every embedding $F\hookrightarrow \overline\Q_p$
    \item $\O$ be the ring of integers of $E$, $\varpi$ a uniformizer of $\O$
    \item $\epsilon\in \widehat{K_\infty/K_\infty^\circ}$ a permissible signature (\Cref{defn: permissible})
    \item $X_U$ the locally symmetric space of $\GL_{n,F}$ (\Cref{lss}), $\partial X_U$ the boundary of its Borel-Serre compactification.
    \item $\mathbb T^S:= \mathcal H(G^S,U^S)\otimes_\Z \O.$
\end{itemize}

Note that $\mathbb T^S$ is a commutative $\O$-algebra.
If $M$ is an $\O$-module equipped with an $\O$-algebra homomorphism $\mathbb T^S\to \End_\O(M)$, then we define 
\[\mathbb T^S(M):=im(\mathbb T^S\to \End_\O(M)).\]

For completeness, let us also remark that in \Cref{lem:Lambda} below, we shall show that there is a Hecke eigensystem $\Lambda:\T^S\to\O$ attached to $\pi$. We will let $$\m:=\ker(\Lambda \pmod\varpi).$$



\subsection{Hecke eigensystems}
We first show that we have a Hecke eigensystem attached to $\pi$.
\begin{lemma}\label{lem:Lambda}
    We have an $\O$-algebra homomorphism
    \begin{equation*}
        \Lambda:\mathbb T^S\to \O
    \end{equation*}
    sending $t\in \mathbb T^S$ to its eigenvalue on $(\iota^{-1}\pi^{\infty})^{U}$.
    This homomorphism factors through $\mathbb T^S(H^*_!(X_U,M_{\mu,\O})),$ where $H^*_!=im(H_c^*\to H^*)$ is the inner cohomology and $H^*=\oplus_{i\ge 0}H^i$.
\end{lemma}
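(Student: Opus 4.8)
The plan is to produce the eigensystem $\Lambda$ directly from the fact that $(\pi^\infty)^U$ is one-dimensional, and then identify where it lands in cohomology. First I would observe that $\mathcal H(G^S,U^S)$ acts on $(\pi^{\infty,S})^{U^S}$, and since $\pi^\infty$ is factorisable, $(\pi^{\infty,S})^{U^S}\otimes (\pi_S)^{U_S}=(\pi^\infty)^U$ is one-dimensional; in particular $(\pi^{\infty,S})^{U^S}$ is one-dimensional as an $\mathcal H(G^S,U^S)\otimes_\Z\C$-module, so $\mathbb T^S\otimes_\Z\C$ (equivalently $\mathcal H(G^S,U^S)$) acts on it by a scalar-valued character. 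Transporting this across $\iota^{-1}$ gives a homomorphism $\mathbb T^S\to\overline\Q_p$; I must then check its image lies in $\O$. This is where the field of rationality hypothesis on $E$ enters: the eigenvalues of Hecke operators on a cuspidal representation $\pi$ are algebraic and lie in $\Q(\pi)$ (this is exactly why $\Q(\pi)$ is defined the way it is in \cite[section 3]{Clozel}), so $\iota^{-1}$ of each eigenvalue lies in $\iota^{-1}(\Q(\pi))\subset E$; integrality then follows because the Hecke operators $[U^S\delta U^S]$ act on the $\O$-lattice $H^*(X_U,M_{\mu,\O})$, hence are integral over $\O$, so their eigenvalues on any subquotient are algebraic integers, hence in $\O_E=\O$. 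That gives the desired $\Lambda:\mathbb T^S\to\O$.

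Next I would prove that $\Lambda$ factors through $\mathbb T^S(H^*_!(X_U,M_{\mu,\O}))$. The idea is to pass to $\C$-coefficients and use the concrete description of inner cohomology from \cref{lem:T acts semisimply}. Since $\pi$ is a regular algebraic \emph{cuspidal} automorphic representation of weight $\mu$ with $\pi^U\neq 0$, by \cref{1 dim space} and the discussion around \eqref{eq: cusp isotypic}, the space $H^{b_n}_{cusp}(X_U,M_{\mu,\C})[(\pi^{\infty,S})^{U^S}]=H^{b_n}(\mathfrak g,K_\infty^\circ,\pi_\infty\otimes M_{\mu,\C})\otimes(\pi^\infty)^U$ is nonzero, and it injects Hecke-equivariantly into $H^{b_n}_!(X_U,M_{\mu,\C})$. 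On this subspace $\mathcal H(G^S,U^S)\otimes_\Z\C$ acts by the character $\iota\circ\Lambda$. Therefore $\iota\circ\Lambda$ is (the $\C$-linear extension of) a character of $\mathbb T^S(H^*_!(X_U,M_{\mu,\C}))=\mathbb T^S(H^*_!(X_U,M_{\mu,\O}))\otimes_\O\C$, i.e. $\Lambda$ kills the kernel of $\mathbb T^S\to\mathbb T^S(H^*_!(X_U,M_{\mu,\O}))$, which is what factoring through means. One should be a little careful that the isomorphism $H^*_!(X_U,M_{\mu,\O})\otimes_\O E\cong H^*_!(X_U,M_{\mu,E})$ and the compatibility $H^*(X_U,M_{\mu,E})\otimes_E\C\cong H^*(X_U,M_{\mu,\C})$ are Hecke-equivariant, so that the image Hecke algebras match up after base change; this is stated in the excerpt.

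The main obstacle is the bookkeeping in the factorisation step: ensuring that the action of $\mathbb T^S$ (defined via $\O$-coefficients) on the complex-coefficient inner cohomology is genuinely the one induced by base change along $\iota$, and that ``$\pi$ contributes to $H^*_!$'' is correctly matched with ``$(\pi^{\infty,S})^{U^S}$ is an eigenspace for $\iota\circ\Lambda$''. The representation-theoretic inputs (one-dimensionality of $(\pi^\infty)^U$, nonvanishing of cuspidal cohomology in degree $b_n$, injection of cuspidal into inner cohomology) are all quoted from earlier in the paper, so the real content is just tracking these base-change compatibilities carefully. Rationality/integrality of Hecke eigenvalues is standard and I would cite \cite{Clozel} for it rather than reprove it.
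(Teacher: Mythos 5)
Your proposal is correct and follows essentially the same route as the paper: scalar action of $\T^S$ via factorizability and one-dimensionality of the spherical vectors, rationality via Clozel's $\Q(\pi)$-structure on $\pi^\infty$, integrality via the Hecke action on the finite free $\O$-module $H^*_!(X_U,M_{\mu,\O})/\O\text{-torsion}$, and the factorization via the Hecke-equivariant injection $(\pi^\infty)^U\hookrightarrow H^*_{cusp}\hookrightarrow H^*_!$. The only presentational caveat is that your integrality argument already presupposes that the eigenvalue of $\Lambda$ occurs in cohomology, so in a final writeup the factorization step should precede it (as the paper orders things), and ``$\O$-lattice'' should be replaced by the torsion-free quotient.
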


\begin{proof}
    In this proof, we may sometime abuse notation and regard a $\C$-vector space as an $\O$-module via the map $\iota:\overline\Q_p\xrightarrow{\sim} \C$.
    Let $\mathcal H:=\mathcal H(G^S,U^S)$.

    As $\GL_n(\A_F^S)$ acts on the $\overline\Q_p$-vector space $\iota^{-1}\pi^\infty$, we know $\mathbb T^S$ acts on the $U^S$-invariant $(\iota^{-1}\pi^\infty)^{U^S}$ and hence also on $(\iota^{-1}\pi^\infty)^{U}$.
    Moreover, for all finite $v\notin S$, $\pi_v$ is unramified so $\pi_v^{U_v}$ is a one dimensional $\overline\Q_p$-vector space.
    It follows that each element of $\mathbb T^S$ acts by a scalar on $(\iota^{-1}\pi^\infty)^{U}$, so we get an $\O$-algebra homomorphism
    \begin{equation}\label{eq:eigenvalue}
        \mathbb T^S \to \overline \Q_p
    \end{equation}
    sending an element to its eigenvalue.

    Note that $\mathbb T^S(H^*_!(X_U,M_{\mu,\O}))$ acts on $H^*_!(X_U,M_{\mu,\O})\otimes_{\O}\C \cong H^*_!(X_U,M_{\mu,\C})$.
    Since $\pi$ is cohomological of weight $\mu$, we have $\mathcal H$-equivariant injections 
    \[(\pi^\infty)^U \hookrightarrow H^*_{cusp}(X_U,M_{\mu,\C})\hookrightarrow H^*_!(X_U,M_{\mu,\C}).\]
    Pick any non-zero $x\in (\pi^\infty)^U$ and let $y$ be its image under this injection.
    For all $t\in \mathbb T^U$, its eigenvalue on $(\iota^{-1}\pi^\infty)^U$ only depends on how it acts on $y$ and this is determined by the image of $t$ in $\mathbb T^S(H^*_!(X_U,M_{\mu,\O}))$. 
    It follows that \eqref{eq:eigenvalue} factors through $\mathbb T^S(H^*_!(X_U,M_{\mu,\O})).$  
    It remains to show that the image of \eqref{eq:eigenvalue} lies in $\O$.

    We first show that its image lies in $E$.
    By \cite[Proposition 3.1]{Clozel}, there is a $\GL_n(\A_F^\infty)$-stable $\Q(\pi)$-vector subspace $W$ of $\pi^\infty$ such that $\pi^\infty=W\otimes_{\Q(\pi)}\C.$
    Then $(\pi^\infty)^U=W^U\otimes_{\Q(\pi)}\C.$
    Let $h\in \mathcal H$.
    We have already seen that it acts by a scalar on $(\pi^\infty)^U$, so the same is true for $W^U$.
    As $W^U$ is a $\Q(\pi)$-vector space, the scalar must lies in $\Q(\pi)$.
    Hence the image of \eqref{eq:eigenvalue} lies in $\iota^{-1}(\Q(\pi))\subset E$.

    Finally, by the existence of Borel-Serre compactification of $X_U$, we know that $H^*_!(X_U,M_{\mu,\O})$ is a finite $\O$-module, so 
    \[H^*_!(X_U,M_{\mu,\O})/\O\text{-torsion}\]
    is a finite free $\O$-module stable under $\mathcal H$.
    Pick an $\O$-basis $\mathcal B$ for this module.
    We can then express the action of $\mathcal H$ on $H^*_!(X_U,M_{\mu,\O})/\O\text{-torsion}$ by matrices with entries in $\O$.
    We can view $\mathcal B$ as a $\C$-basis for $H^*_!(X_U,M_{\mu,\C})$.
    Then the action of $\mathcal H$ on $H^*_!(X_U,M_{\mu,\C})$ is given by the same matrices.
    Hence each eigenvalue of $\mathcal H$ on this space is a root of a monic polynomial over $\O$.
    We know they lie in $E$ by the previous paragraph, so they lie in $\O$ as $\O$ is integrally closed.
\end{proof}

We let $\mathfrak m=\ker(\Lambda \mod \varpi)\subset \mathbb T^S$, which is a maximal ideal. We define
\[T:=\mathbb T^S(H^*_!(X_U,M_{\mu,\O}))_\m/\O\text{-torsion}.\]
\begin{lemma}\label{lem: small lambda}
    $T$ is a reduced finite flat\footnote{Note that since $\O$ is a PID, an $\O$-module is finite flat if and only if it is finite free.} local $\O$-algebra. Also, $\Lambda$ induces a local $\O$-algebra map
    \[\lambda:T\to \O.\]
\end{lemma}

\begin{proof}
    Let $H:=H^*_!(X_U,M_{\mu,\O})$.
    By the existence of Borel-Serre compactification of $X_U$, $H$ is a finite $\O$-module, so $\T^S(H)$ is a finite $\O$-algebra.
    We let $$\Lambda':\mathbb T^S(H) \to \O$$ be the map induced by $\Lambda$, which exists by lemma \ref{lem:Lambda},
    and let $q:\T^S\to \T^S(H)$ be the quotient map, so $\Lambda=\Lambda'\circ q$.
    Then $\m\supset \ker q$, so $q(\m)$ is a maximal ideal of $\T^S(H)$ and hence
    \[\T^S(H)_\m= \T^S(H)_{q(\m)}\]
    is a finite local $\O$-algebra. The same is thus true for $T$.
    As $T$ is a finitely generated torsion-free $\O$-module and $\O$ is a PID, $T$ is flat over $\O$.
    Hence $T\hookrightarrow T\otimes_\O \C$.
    As $\C$ is $\O$-flat, $T\otimes_\O \C=\T^S_\C(H^*_!(X_U,M_{\mu,\C}))_\m,$ which is reduced by \Cref{lem:T acts semisimply}. Thus, $T$, which injects into $T\otimes_\O \C$, is also reduced.

    Note that $q(\m)=\ker(\Lambda'\mod \varpi)$. It follows that $\Lambda'$ induces an $\O$-algebra map $\T^S(H^*_!(X_U,M_{\mu,\O}))_{q(\m)}\to \O$ and hence an $\O$-algebra map $\lambda:T\to \O$, i.e. a commutative diagram
\[\begin{tikzcd}
	T & \O \\
	\O
	\arrow["\lambda", from=1-1, to=1-2]
	\arrow[hook, from=2-1, to=1-1]
	\arrow[from=2-1, to=1-2]
\end{tikzcd}\]
 It follows that $\lambda^{-1}(\varpi \O)$ is a prime ideal of $T$ lying above $\varpi\O$ (i.e. the preimage of $\lambda^{-1}(\varpi \O)$ along the structure map $\O\to T$ is $\varpi\O$).
    Since $T$ is a local $\O$-algebra, the maximal ideal of $T$ also lies above $\varpi\O$.
    By the incomparability theorem for injective integral ring extensions, we deduce that $\lambda^{-1}(\varpi \O)$ is the maximal ideal of $T$. Thus, $\lambda$ is a local homomorphism.
\end{proof}

\begin{remark}\label{tem}
    Let us show that $$T\cong\T^S(\overline H^*_!(X_U,M_\mu)_\m),$$ where $\overline H^*_!(X_U,M_\mu):=H^*_!(X_U,M_\mu)/\O\text{-torsion}.$ (In the following, a bar on top of an $\O$-module will usually mean the module modulo its $\O$-torsion.)
    Write $H=H^*_!(X_U,M_{\mu,\O}).$
    There is an obvious surjection 
    \begin{equation}\label{eq:tem}
        \T^S(H) \to \T^S((\overline H)_\m).
    \end{equation}
    Let $t\in \T^S(H)$.
    Since $H$ is a finitely generated $\O$-module and $(\overline H)_\m$ is a quotient of it, we know that $t$ is in the kernel of \eqref{eq:tem} if and only if there exists $a\in\mathbb N, b\in \T^S-\m$ such that $\varpi^a bt=0$, which is equivalent to $t \in \ker(\T^S(H)\twoheadrightarrow \T^S(H)_\m/\O\text{-tors})$. 
    Thus \eqref{eq:tem} induces the desired isomorphism.
\end{remark}

\subsection{Congruence ideals for automorphic representations}\label{sec: cong ideals for aut reps}
The previous lemma means we are now in the situation of section \ref{sec: congruence module}.
\begin{definition}\label{defn: congruence ideal}
    With the setup above, define the \emph{congruence ideal}
    \[\eta_{\pi}:=\eta_\lambda=\lambda(Ann_T(\ker\lambda))=Fitt_\O\left(\frac{eT}{eT\cap T}\right)\] 
    as in definitions \ref{defn: Wiles defn of eta}, \ref{defn: congruence module}, where $e$ is the idempotent in $T\otimes_\O E$ corresponding to $(1,0)$ in the decomposition $T\otimes_\O E \cong E \times T^c_E$ induced by $\lambda$.
    For $i\in\{b, t\}$ and a permissible $\epsilon\in \widehat{K_\infty/K_\infty^\circ}$, define the \emph{cohomological congruence ideal}  
    \[\eta_{\pi,i,\epsilon}:=\eta_{\lambda}(H^i_!(X_U,M_{\mu,\O})_\m[\epsilon]/\O\text{-torsion}) = Fitt_\O\left(\frac{eH}{eH\cap H}\right)\]
    where $H=H^i_!(X_U,M_{\mu,\O})_\m[\epsilon]/\O\text{-torsion}$.
\end{definition}

Let $\O_{\overline\Q_p}$ be the ring of integers of $\overline\Q_p$ and $\m_{\overline\Q_p}$ be its maximal ideal.
\begin{lemma}\label{lem: M ker lambda}
    Fix $i\in\{b, t\}$ and a permissible $\epsilon\in \widehat{K_\infty/K_\infty^\circ}$.
    Let $p>2$ and $\T^S_{\overline\Q_p} := \T^S\otimes_{\O} \overline\Q_p$.
    \begin{enumerate}[(a)]
        \item\label{decomp of V} Let $\mathfrak n$ be a maximal ideal of $\T^S$ and $V=H^i(X_U,M_{\mu,\overline\Q_p})_\n$. Then 
        \begin{equation*}
            V = \oplus_{\alpha: \T^S_{\overline\Q_p}(V) \to \overline\Q_p} V[(\ker\alpha)^\infty]
        \end{equation*}
        where $\alpha$ runs over $\overline\Q_p$-algebra homomorphisms $\T^S_{\overline\Q_p}(V)\to\overline\Q_p$ and $[(\ker\alpha)^\infty]$ is the set of elements annihilated by some power of $\ker\alpha$. There are only finitely many such $\alpha$ and any such $\alpha$ satisfies $(\alpha|_{\T^S})^{-1}(\m_{\overline\Q_p})= \n$.
        \item\label{decomp of W} Let $\mathfrak n$ be a maximal ideal of $\T^S$ and $W=H^i_!(X_U,M_{\mu,\overline\Q_p})_\n$. Then 
        \begin{equation*}
            W = \oplus_{\alpha: \T^S_{\overline\Q_p}(W) \to \overline\Q_p} W[\ker\alpha]
        \end{equation*}
        where $\alpha$ runs over $\overline\Q_p$-algebra homomorphisms $\T^S_{\overline\Q_p}(W)\to\overline\Q_p$. There are only finitely many such $\alpha$ and any such $\alpha$ satisfies $(\alpha|_{\T^S})^{-1}(\m_{\overline\Q_p})= \n$.
        \item \label{item: isotypic} Let $H=H^i_!(X_U,M_{\mu,\O})_\m[\epsilon]/\O\text{-torsion}$. Then $H[\ker\lambda]$ 
        is a free $\O$-module of rank one whose base change to $\C$ is canonically isomorphic to
        \[H^i_!(X_U,M_{\mu,\C})[(\pi^{\infty,S})^{U^S}\times \epsilon]\]
        as a $\C$-vector space.
        \item \label{eta contains eta}$\eta_{\pi,i,\epsilon}\supset \eta_\pi.$ Equality holds if $H$ is a free $T$-module of rank $1$.
    \end{enumerate}
\end{lemma}

\begin{proof}
    Note that $\T^S_{\overline\Q_p}(V)$ is finite dimensional as a $\overline\Q_p$-vector space, so it is an Artinian ring. In particular, it is the finite product of its localisations at maximal ideas. Hence we get a corresponding descomposition
    \[V=\oplus_{\p\in \mathrm{mSpec}\T^S_{\overline\Q_p}(V)} V_\p.\]
    By \cite[Section 2.5.1]{eigenbook}, $V_\p=V[\p^\infty]$.
    By Zariski's lemma, $\mathrm{mSpec}\T^S_{\overline\Q_p}(V) = \{\ker \alpha |\; \alpha:\T^S_{\overline\Q_p}(V)\to\overline\Q_p \text{ is a $\overline\Q_p$-algebra homomorphism}\}.$
    We thus get the desired decomposition of $V$.

    To see that $(\alpha|_{\T^S})^{-1}(\m_{\overline\Q_p})= \n$, note that $\T^S_{\overline\Q_p}(V) = \T^S(H^i(X_U,M_{\mu})_\n) \otimes_{\O} \overline\Q_p$ by $\O$-flatness of $\overline\Q_p$.
    Thus, $\alpha$ gives rise to an $\O$-algebra homomorphism $\T^S(H^i(X_U,M_{\mu})_\n) \to \overline\Q_p$.
    The image of the this homomorphism necessarily lies in $\O_{\overline\Q_p}$ because $\T^S(H^i(X_U,M_{\mu})_\n)$ is finitely generated as an $\O$-module.
    By the same reasoning as \Cref{tem}, $\T^S(H^i(X_U,M_{\mu})_\n)=\T^S(H^i(X_U,M_{\mu}))_\n.$
    Thus, we get an $\O$-algebra homomorphism $a:\T^S(H^i(X_U,M_{\mu}))_\n \to \O_{\overline\Q_p}$.
    We have $a^{-1}(\m_{\overline\Q_p})= \n \T^S(H^i(X_U,M_{\mu}))_\n$ by the going up theorem \cite[Lemma 00GU]{stacks-project}.
    This proves $(\alpha|_{\T^S})^{-1}(\m_{\overline\Q_p})= \n$.


    Part \ref{decomp of W} can be proved in the same way using \Cref{lem:T acts semisimply}\ref{semisimple}. One can also prove it in a more straightforward manner.

    For part \ref{item: isotypic}, we know $T[\ker\lambda]$ is a finite free $\O$-module since it is torsion-free and finitely generated.
    To find its rank, note that $\ker\lambda$ is the image of $\ker\Lambda=(t-\Lambda(t):t\in \mathcal \T^S)$ under the projection $q:\T^S\twoheadrightarrow T$.
    As $T$ is Noetherian, we can pick $t_1,\dots,t_n\in \T^S$ such that $q((t_i-\Lambda(t_i):1\le i\le n))=\ker\lambda$.
    Then 
    \[H[\ker\lambda]=H[\{t_i-\Lambda(t_i):1\le i\le n\}]=\ker(H\xrightarrow{m\mapsto (t_i-\Lambda(t_i)m)} H^n).\]
    Taking kernel commutes with flat base change, so
    \begin{align}
        H[\ker\lambda]\otimes_\O\C &= (H^i_!(X_U,M_{\mu,\O})_\m[\epsilon]\otimes_\O\C)[\ker\lambda]\nonumber\\
        &= H^i_!(X_U,M_{\mu,\C})_\m[\epsilon][\ker\lambda] \nonumber\\
        &=H^i_!(X_U,M_{\mu,\C})[\epsilon][\ker\lambda]\label{eq:localisation}\\
        &=H^i_!(X_U,M_{\mu,\C})[(\pi^{\infty,S})^{U^S}][\epsilon]\nonumber\\
        &=H^i(\mathfrak g,K_\infty^\circ,\pi_\infty\otimes_\C M_{\mu,\C})[\epsilon]\otimes_\C (\pi^\infty)^U.\label{eq:H(g,K')}
    \end{align}
    To see that equation \eqref{eq:localisation} holds, note that $\O$ is a Henselian local ring, so $\T^S(H^i_!(X_U,M_{\mu,\O})) = \oplus_\n \T^S(H^i_!(X_U,M_{\mu,\O}))_\n$, where $\n$ runs over the maximal ideal of $\T^S$ that contains $\ker(\T^S \twoheadrightarrow \T^S(H^i_!(X_U,M_{\mu,\O})))$. Thus, $H^i_!(X_U,M_{\mu,\C}) = \oplus_{\n} H^i_!(X_U,M_{\mu,\C})_\n$. By part \ref{decomp of W}, if $\n\neq \m$, then $H^i_!(X_U,M_{\mu,\C})_\n[\ker\lambda]=0$.
    Equation \eqref{eq:H(g,K')} holds by lemma \ref{lem:T acts semisimply}.
    By \Cref{1 dim space} and the choice of $U$, \eqref{eq:H(g,K')} is a one dimensional $\C$-vector space.
    This proves part \ref{item: isotypic}.

    The first part of \ref{eta contains eta} now follows from lemma \ref{lem: M lambda rk 1}. The remaining part is easy.
\end{proof}

\subsection{Betti-Whittaker periods}
We shall mostly follow \cite{Raghuram-Balasubramanyam} to define the Betti-Whittaker period for this subsection.
Fix all Haar measures as in that paper.
Let $F,\pi,\dots$ be defined as before.

\begin{definition}\label{defn:5 isoms}
    Let $i\in\{b_n,t_n\}$.
    Pick the generator $w_\infty$ (depends on $i$) of the 1-dimensional space (\Cref{1 dim space})
    \[H^{i}(\mathfrak g, K_\infty^0,W(\pi_\infty)\otimes M_{\mu,\C})[\epsilon]\] as in \cite[Section 3.2.2]{Raghuram-Balasubramanyam}.
    Fix a continuous unitary homomorphism $\psi:F\backslash \mathbb A_F\to \C^\times$ such that $\psi_v$ is non-trivial\footnote{Such a $\psi$ exists, e.g. take $\psi_v(x)=e^{-2\pi ix}$ for real $v$, $\psi_v(x)=e^{-2\pi i(x+\bar x)}$ for complex $v$, and $\psi_v(x)=e^{2\pi i Tr_{F_v/\Q_p}(x)}$ for all $v\mid p$ and all rational primes $p$, and $\psi=\prod_v \psi_v$.}for all $v$.
    We shall abuse notation and let $\psi$ also denote the corresponding standard character on the unipotent radical of $G$, i.e. $\psi(u)=\psi(u_{1,2}+u_{2,3}+\dots+ u_{n-1,n})$.
    Let $W(\pi^\infty)$ be the Whittaker model of $\pi^\infty$ with respect to $\psi^\infty$.
    Let $V_\pi$ be the subspace\footnote{Well-defined by the multiplicity one theorem.} of $L^2_0(G(F)\backslash G(\A_F),\chi)$ realizing $\pi$.
    Define $\mathcal F_{\pi^\infty,\epsilon,w_\infty,i}$ as the composition of the isomorphisms
    \begin{align*}\label{5 isoms}
        W(\pi^\infty)^U & \xrightarrow{\sim} W(\pi^\infty)^U\otimes_\C  H^{i}(\mathfrak g, K_\infty^0,W(\pi_\infty)\otimes M_{\mu,\C})[\epsilon] \nonumber\\
        & \xrightarrow{\sim} H^{i}(\mathfrak g, K_\infty^0,W(\pi)^U\otimes M_{\mu,\C})[\epsilon] \nonumber\\
        & \xrightarrow{\sim} H^{i}(\mathfrak g, K_\infty^0,V_\pi^U\otimes M_{\mu,\C})[\epsilon]\\
        &= H^{i}_{cusp}(X_U,M_{\mu,\C})[(\pi^\infty)^U\times \epsilon]\\
        & \xrightarrow{\sim} H^i_!(X_U,M_{\mu,\C})[(\pi^{\infty,S})^{U^S}\times \epsilon].
    \end{align*}
    The first map is $w^\infty\mapsto w^\infty\otimes w_\infty$; the second map is trivial; the third map is the inverse of the map
    \begin{align*}
        V_\pi &\xrightarrow{\sim} W(\pi)\\
        f &\mapsto \left(g\mapsto \int_{U_n(F)\backslash U_n(\mathbb A_F)} f(ug)\psi^{-1}(u)du\right)
    \end{align*}
    and $U_n$ is the unipotent radical of the standard Borel subgroup $B_n$.
    The last isomorphism is by \Cref{lem:T acts semisimply}.
\end{definition}

\begin{definition}\label{defn: period}
    For each finite place $v$, let $w_v$ be the essential vector\footnote{If $\pi_v$ is unramified, then $w_v$ is the unique element in $W(\pi_v,\psi_v)^{\GL_n(\O_{F_v})}$ with $w_v(I_n)=1$.} of $\pi_v$ in its $\psi_v$-Whittaker model.
    Let $w^\infty=\otimes_{v<\infty} w_v$.
    Then $w^\infty\in W(\pi^\infty)^U$ by our choice of $U$ and the definition of essential vector.
    Let $H=H^i_!(X_U,M_{\mu,\O})_\m[\epsilon]/\O\text{-torsion}$.
    By \Cref{lem: M ker lambda}, $H[\ker\lambda]\otimes_\O \C= H^i_!(X_U,M_{\mu,\C})[(\pi^{\infty,S})^{U^S}\times \epsilon].$
    We define the \emph{period} 
    \[\p_{\pi,i,\epsilon}\] 
    to be the number in $\C^\times$ such that 
    $\mathcal F_{\pi^\infty,\epsilon,w_\infty,i}(w^\infty)/\p_{\pi,i,\epsilon}$ is an $\O$-generator of $H[\ker\lambda].$
    This is well defined up to multiplication by $\O^\times$ by \Cref{lem: M ker lambda}\ref{item: isotypic}.
\end{definition}

\subsection{\texorpdfstring{$L(1,\pi,\Ad^\circ)$}{L(1,pi,Ad 0)} and congruence ideals}
Recall that $U_v=\GL_n(\O_v)$ for all finite $v\notin S$.
Let \begin{align*}
    \theta: \H^S(G^S,U^S) &\to \H^S(G^S,U^S)\\
    [U^S g U^S] &\mapsto [U^S g^{-1} U^S].
\end{align*}
This is an $\O$-algebra isomorphism.\footnote{This is a homomorphism because $\H^S(G^S,U^S)$ is commutative.}

\begin{lemma}\label{lem: eigensystem for contragredient}
    The Hecke eigensystem $\tilde \Lambda:\T^S \to \O$ attached to the contragredient $\tilde \pi$ is given by $\Lambda\circ\theta$.
\end{lemma}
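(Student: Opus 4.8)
The plan is to unwind the definition of the Hecke eigensystem $\tilde\Lambda$ attached to $\tilde\pi$ and compare it with $\Lambda\circ\theta$ by working place-by-place at the unramified primes $v\notin S$. Recall from \cref{lem:Lambda} that $\Lambda$ sends $t\in\T^S$ to its eigenvalue on the one-dimensional space $(\iota^{-1}\pi^\infty)^U$, and likewise $\tilde\Lambda$ records eigenvalues on $(\iota^{-1}\tilde\pi^\infty)^U$. Since $\T^S=\bigotimes_{v\notin S}\mathcal H(\GL_n(F_v),\GL_n(\O_{F_v}))\otimes_\Z\O$ and both $\pi_v,\tilde\pi_v$ are unramified outside $S$, it suffices to prove, for each such $v$ and each $f\in\mathcal H(\GL_n(F_v),\GL_n(\O_{F_v}))$, that the eigenvalue of $f$ on the spherical vector of $\tilde\pi_v$ equals the eigenvalue of $\theta(f)$ on the spherical vector of $\pi_v$.

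The key input is the classical fact that for an irreducible admissible representation $\sigma_v$ of $\GL_n(F_v)$ and its contragredient $\tilde\sigma_v$, the Satake parameters satisfy $\{\alpha_1,\dots,\alpha_n\}\mapsto\{\alpha_1^{-1},\dots,\alpha_n^{-1}\}$; equivalently, under the Satake isomorphism $\mathcal H(\GL_n(F_v),\GL_n(\O_{F_v}))\otimes\C\xrightarrow{\sim}\C[X_1^{\pm},\dots,X_n^{\pm}]^{S_n}$, the action of $\theta$ corresponds to the involution $X_i\mapsto X_i^{-1}$. On the representation side, the Satake parameters of $\tilde\sigma_v$ are the inverses of those of $\sigma_v$ (this follows, e.g., from the compatibility of $\tilde{}$ with unramified induction $\mathrm{Ind}_{B}^{\GL_n}(\chi_1,\dots,\chi_n)^{\vee}\cong\mathrm{Ind}_{B}^{\GL_n}(\chi_1^{-1},\dots,\chi_n^{-1})$, or from the local Langlands correspondence and $\mathrm{rec}(\tilde\sigma_v)\cong\mathrm{rec}(\sigma_v)^{\vee}$). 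Putting these two together: the eigenvalue of $f$ on the spherical vector of $\tilde\pi_v$ is $\widehat f$ evaluated at the Satake parameters of $\tilde\pi_v$, which are the inverses of those of $\pi_v$, and this equals $\widehat{\theta(f)}$ evaluated at the parameters of $\pi_v$, i.e. the eigenvalue of $\theta(f)$ on the spherical vector of $\pi_v$. Hence $\tilde\Lambda(f_v)=\Lambda(\theta(f_v))$ for each local generator, and so $\tilde\Lambda=\Lambda\circ\theta$ on all of $\T^S$ since $\theta$ is the tensor product of the local involutions and both sides are $\O$-algebra homomorphisms.

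One bookkeeping point to handle carefully is the normalization of the Satake/Hecke action: since $\theta$ is defined integrally by $[U^SgU^S]\mapsto[U^Sg^{-1}U^S]$ with respect to the Haar measure giving $U^S$ volume $1$, I should check that this matches the involution $g\mapsto g^{-1}$ under the (unnormalized) Satake transform, using that $[U_vgU_v]$ and $[U_vg^{-1}U_v]$ have the same cardinality of cosets (they are related by $g\mapsto g^{-1}$, a homeomorphism). This ensures no spurious modulus-character factor appears; alternatively one notes that both $\Lambda\circ\theta$ and $\tilde\Lambda$ take values in $\O$ already (by \cref{lem:Lambda}), so any discrepancy would have to be a unit, and a direct check at, say, the operators $T_{v,i}$ pins it down. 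I expect this normalization/measure check to be the only genuine obstacle; the representation-theoretic content is standard.

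\medskip

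A cleaner alternative, which I would mention as a remark: one may avoid Satake parameters entirely and argue directly that the assignment $w\mapsto \tilde w$, where $\tilde w(g):=w(w_n\,{}^tg^{-1})$ with $w_n$ the long Weyl element, gives an isomorphism of $\GL_n(\A_F^\infty)$-representations $\pi^\infty\xrightarrow{\sim}(\tilde\pi^\infty)$ intertwining the $\T^S$-action on the left with the $(\T^S\text{ via }\theta)$-action on the right; evaluating on spherical vectors then yields $\tilde\Lambda=\Lambda\circ\theta$. Either route gives the claim.
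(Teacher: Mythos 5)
Your argument is correct, but it takes a genuinely different route from the paper. The paper's proof is a short direct computation with the dual pairing: at an unramified place $v$ it uses the isomorphism $(\widetilde V)^K\xrightarrow{\sim}\widetilde{V^K}$ together with the fact that a double coset admits a \emph{common} system of representatives for left and right cosets, $KgK=\bigsqcup_i g_iK=\bigsqcup_i Kg_i$ (the $\GL_n$ analogue of \cite[Lemma 5.5.1(c)]{Diamond-Shurman}, coming from the Cartan decomposition and the anti-involution $g\mapsto{}^tg$); then $([KgK]f)(v)=f\bigl(\sum_i g_i^{-1}v\bigr)=\lambda([Kg^{-1}K])f(v)$, and one is done. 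This sidesteps entirely the normalization issue you rightly flag as the only delicate point of your Satake-theoretic route: because the paper never passes through the Satake transform, no $q_v^{1/2}$ bookkeeping arises and the argument stays integral. Your route is fine as well — the identity $\widehat{\theta(f)}(\alpha)=\widehat f(\alpha^{-1})$ does hold in the standard normalization (e.g.\ $\theta(T_{v,i})=T_{v,n}^{-1}T_{v,n-i}$ has Satake transform $e_{n-i}(X)/e_n(X)\cdot q_v^{i(n-i)/2}=q_v^{i(n-i)/2}e_i(X^{-1})$, matching $\widehat{T_{v,i}}(X^{-1})$), and the contragredient inverts Satake parameters — but your fallback ``any discrepancy is a unit, check on $T_{v,i}$'' is not by itself a complete argument, so you would need to carry out the spherical-function computation honestly. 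Your alternative remark via $\widetilde w(g)=w(w_n\,{}^tg^{-1})$ is closest in spirit to the paper but imports the Gelfand--Kazhdan realization of the contragredient, which is heavier machinery than the paper needs. In short: what you propose works, but the paper's duality computation is more elementary and avoids the one step you identify as a genuine obstacle.
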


\begin{proof}
    It is enough to show the analogous fact after base changing from $\O$ to $\C$ and working at a single place.
    Let $v\notin S$ be a finite place, $G=\GL_n(F_v)$, $K=\GL_n(\O_{F_v})$, $V=\pi_v$.

    It is well known that the restriction map induces an isomorphism 
    \begin{equation}\label{eq:res}
        (\widetilde{V})^K \xrightarrow{\sim} \widetilde{V^K}
    \end{equation}
    so in particular $(\widetilde{V})^K$ is 1-dimensional.

    Let $g\in G$. By the same proof as \cite[Lemma 5.5.1 (c)]{Diamond-Shurman}, there exist $g_1,\dots,g_m\in G$ such that $K g K= \sqcup_i g_i K = \sqcup K g_i$.
    For all $f\in (\widetilde{V})^K$ and $v\in V^K$,
    \begin{align*}
        ([KgK]f)(v)=(\sum g_i\cdot f)(v)=f(\sum g_i^{-1}v) = f([Kg^{-1}K]v)=\lambda([Kg^{-1}K])f(v).
    \end{align*}
    By \eqref{eq:res}, $[KgK]f=\lambda([Kg^{-1}K])f$, as desired.
\end{proof}

\begin{proposition}[Poincar\'e duality]\label{prop: Poincare duality}
    Let $d=dim X_U$.
    The cup product induces a perfect pairing
    \[[\,,\,]:H^i_c(X_U,M_\mu)/(\O-\text{tors}) \times H^{d-i}(X_U,M_\mu^\vee)/(\O-\text{tors}) \to \O\]
    where\footnote{In general, $M_\mu^\vee$ and $M_{\mu^\vee}$ are \emph{not} isomorphic.} $M_\mu^\vee=\Hom_\O(M_\mu,\O).$
    If $S$ is a finite set of finite places such that $U_v=\GL_n(\O_v)$ for all finite $v\notin S$, then 
    \[[tx,y] = [x, \theta(t)y]\]
    for all $t\in \H^S(G^S,U^S)$, $x\in H^i_c(X_U,M_\mu)/(\O-\text{tors})$, $y\in H^{d-i}(X_U,M_\mu^\vee)/(\O-\text{tors})$.
\end{proposition}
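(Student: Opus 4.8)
The plan is to establish Poincaré duality for the Borel–Serre compactification and then check Hecke-equivariance by an explicit computation with the cup product and the coset decomposition. First, I would recall that $X_U$ is a manifold with boundary via its Borel–Serre compactification $\overline{X}_U$, of dimension $d = \dim X_U$, and that the cup product together with the evaluation pairing $M_\mu \otimes_\O M_\mu^\vee \to \O$ gives a map
\[
H^i_c(X_U, \mathcal L_{M_\mu}) \times H^{d-i}(X_U, \mathcal L_{M_\mu^\vee}) \to H^d_c(X_U, \O) \to \O,
\]
using that $\overline{X}_U$ is an orientable $d$-manifold with boundary (orientability holds since $U$ is neat, so $X_U$ is a genuine manifold, and the structure group is connected). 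Over $E$ this is a perfect pairing by classical Poincaré–Lefschetz duality; over $\O$, after quotienting by $\O$-torsion on both sides, perfectness follows because $H^*_c(X_U,M_\mu)$ and $H^*(X_U,M_\mu^\vee)$ are finitely generated $\O$-modules (Borel–Serre), so the torsion-free quotients are finite free, and a pairing of finite free $\O$-modules that becomes perfect after $\otimes_\O E$ and is integral on both sides is perfect iff its determinant is a unit — this I would deduce from the universal coefficient / duality statement relating the torsion in degree $i$ to the torsion in the complementary degree, or simply by citing the standard integral Poincaré–Lefschetz duality for manifolds with boundary applied to $\overline{X}_U$ together with the identification of cohomology with $\O$-coefficients with singular cohomology of $\overline{X}_U$ (and of compactly supported cohomology with that of $(\overline{X}_U,\partial\overline{X}_U)$).

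Next, for the Hecke-equivariance, I would argue at the level of the chain/cochain complexes $C^\bullet_{\mathbb A}(U,M)$ from the proposition of Kret–Thorne quoted above, or equivalently at the level of a finite simplicial/CW model, where the cup product is the usual Alexander–Whitney product. The key point is the compatibility between the two adjoint actions: for $\delta \in \Delta$ with $U\delta U = \bigsqcup_i \delta_i U = \bigsqcup_i U\delta_i'$, the operator $[U\delta U]$ acts on $H^i_c$ by pushforward along the correspondence $X_U \xleftarrow{p_1} X_{U\cap \delta U\delta^{-1}} \xrightarrow{p_2} X_U$ (i.e. $(p_1)_! \circ (p_2)^*$), while on $H^{d-i}$ it acts by $(p_2)_! \circ (p_1)^*$ — the two legs being swapped. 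Using the projection formula $(p_1)_!(\alpha \cup p_1^*\beta) = (p_1)_!\alpha \cup \beta$ and its variant, together with $p_1^* \circ (p_2)_! = (p_2)_! \circ p_1^*$-type base-change on the correspondence (valid since $p_1,p_2$ are finite étale of the same degree here, as $U$ is neat), one computes
\[
[[U\delta U]x, y] = [(p_1)_!(p_2)^*x, y] = [(p_2)^*x, (p_1)^*y]_{X_{U\cap\delta U\delta^{-1}}} = [x, (p_2)_!(p_1)^*y] = [x, [U\delta^{-1}U]y],
\]
and since for $v \notin S$ the operator $[U^S\delta U^S]$ with $\delta$ the relevant $\GL_n(\O_v)$-double coset satisfies $[U\delta^{-1}U] = \theta([U\delta U])$ by definition of $\theta$, this gives exactly $[tx,y] = [x,\theta(t)y]$; the statement for general $t$ follows by $\O$-linearity. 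I would also note that the issue of matching $M_\mu$ with $M_\mu^\vee$ rather than $M_{\mu^\vee}$ is precisely because the Hecke action on the coefficient sheaf is built from the action of $\prod_{v\mid p}\GL_n(\O_{F_v})$ on $M_\mu$, and dualizing the sheaf dualizes that action — there is no transpose-inverse here, which is why $M_\mu^\vee \not\cong M_{\mu^\vee}$ in general.

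The main obstacle I expect is the bookkeeping for the integral perfectness after killing $\O$-torsion: one must be careful that the pairing $H^i_c/\text{tors} \times H^{d-i}/\text{tors} \to \O$ is perfect and not merely non-degenerate, and this is not automatic for an arbitrary manifold with boundary with coefficients in a local system — it uses that the local system $\mathcal L_{M_\mu}$ is self-dual-up-to-$\mathcal L_{M_\mu^\vee}$ (built in) and the Lefschetz duality isomorphism $H^i_c(X_U,\mathcal L) \cong H_{d-i}(X_U,\mathcal L)$ of $\O$-modules, after which Ext-terms from universal coefficients are exactly the torsion and are discarded symmetrically. A clean way to finish is: Lefschetz duality gives $H^i_c(X_U,\mathcal L_{M_\mu}) \cong H_{d-i}(\overline X_U, \partial\overline X_U; \mathcal L_{M_\mu})$, the universal coefficients spectral sequence (or short exact sequence, since $\O$ is a PID) expresses $H^{d-i}(X_U,\mathcal L_{M_\mu^\vee})$ in terms of $\Hom_\O$ and $\operatorname{Ext}^1_\O$ of these homology groups, and modding out by $\O$-torsion on both sides eliminates the $\operatorname{Ext}^1$ contributions and identifies the two free quotients as $\O$-duals of each other. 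The Hecke-equivariance computation is then essentially formal once the correspondence-theoretic description of the two actions is in place; the only subtlety is confirming the degrees of the two projections agree so that base change applies, which holds because $|U\delta U/U| = |U\backslash U\delta U|$ for these symmetric double cosets over $\GL_n(\O_v)$.
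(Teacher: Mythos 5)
Your proof is correct, and for the duality itself it follows essentially the same route as the paper: the paper invokes Verdier duality in the form $RHom_\O(R\Gamma_c(X_U,M_\mu),\O)\cong R\Gamma(X_U,M_\mu^\vee)[d]$ (quoted from \cite[Proposition 2.2.20]{10_authors}) and then runs exactly the universal-coefficient argument you describe --- over the PID $\O$ the spectral sequence collapses to short exact sequences whose $\mathrm{Ext}^1_\O$ term is $\O$-torsion and whose $\Hom_\O$ term is torsion-free, so passing to torsion-free quotients yields a perfect pairing of finite free $\O$-modules. Your Poincar\'e--Lefschetz phrasing is the same argument in topological rather than sheaf-theoretic language. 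Where you genuinely depart from the paper is the Hecke-equivariance: the paper simply cites \cite[Proposition 2.2.20]{10_authors} for $[tx,y]=[x,\theta(t)y]$, whereas you derive it from the correspondence description of the two actions together with the projection formula; this is precisely the content underlying the cited result, and it is the reason no Atkin--Lehner twist is needed for $\GL_n$ (compare the discussion around \cref{lem: pp}). Three small corrections: (i) in your closing paragraph the Lefschetz isomorphism should read $H^i_c(X_U,\mathcal L)\cong H_{d-i}(\overline X_U;\mathcal L)$, not $H_{d-i}(\overline X_U,\partial\overline X_U;\mathcal L)$ --- the relative homology group corresponds under Lefschetz duality to $H^{i}(X_U,\mathcal L)$ without supports --- and this is inconsistent with the (correct) statement you gave earlier; (ii) equality of the degrees of the two projections $p_1,p_2$ is not actually needed anywhere, since the computation uses only the adjunction and projection formula for each leg separately; (iii) ``the structure group is connected'' does not by itself give orientability of $X_U$: one must check that the arithmetic subgroups act orientation-preservingly on the components of $X$ (the component group $K_\infty/K_\infty^\circ$ is nontrivial when $F$ has real places), a point that is absorbed into the identification of the dualizing complex in the cited Verdier-duality statement.
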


A version of this is proved in \cite[Theorem 4.8.9]{Harder-AG}, but the proof is not that easy. We shall deduce this from Verdier duality instead.

\begin{proof}
    As stated in \cite[Proposition 2.2.20]{10_authors}, we have by Verdier duality an isomorphism
    \[RHom_\O(R\Gamma_c(X_U,M_\mu),\O) \cong R\Gamma(X_U,M_\mu^\vee)[d]\]
    in the derived category of $\O$-modules $D(\O)$.
    It follows from one of the spectral sequences for $Ext$ that we have a spectral sequence
    \[E_2^{i,j}=Ext_\O^i(H^{-j}_c(X_U,M_\mu),\O)\Rightarrow E^{i+j}=H^{i+j+d}(X_U,M_\mu^\vee).\]
    Since $\O$ is a PID, the only non-zeros terms lie in $\{(i,j):0\le i \le 1, -d\le j\le 0\}$.
    For all $j\in\Z$, we have an exact sequence
    \[0\to E_2^{1,-j-1} \to E^{-j} \to E_2^{0,-j} \to 0\]
    i.e.
    \[0 \to Ext_\O^1(H^{j+1}_c(X_U,M_\mu),\O)
    \to H^{d-j}(X_U,M_\mu^\vee)
    \xrightarrow{f} \Hom_\O(H^{j}_c(X_U,M_\mu),\O)
    \to 0.\]
    Since $H^{j+1}_c(X_U,M_\mu)$ is finitely generated over $\O$ by the existence of Borel-Serre compactification, the second term is $\O$-torsion.
    On the other hand, the 4th term is $\O$-torsion free. 
    It follows that $\ker f$ is precisely the $\O$-torsion of $H^{d-j}(X_U,M_\mu^\vee)$, so $f$ induces an isomorphism
    \[\tilde f:H^{d-j}(X_U,M_\mu^\vee)/(\O-\text{tors})
    \xrightarrow{\sim} \Hom_\O(H^{j}_c(X_U,M_\mu)/(\O-\text{tors}),\O).\]
    We have a pairing 
    \begin{align*}
        H^{d-j}(X_U,M_\mu^\vee)/(\O-\text{tors}) &\times H^{j}_c(X_U,M_\mu)/(\O-\text{tors}) \to \O\\
        (a&,b)\mapsto \tilde f(a)(b).
    \end{align*}
    This is a perfect pairing because both
        $H^{d-j}(X_U,M_\mu^\vee)/(\O-\text{tors})$ and $H^{j}_c(X_U,M_\mu)/(\O-\text{tors})$ are finite free $\O$-modules and $\tilde f$ is an isomorphism.
    It is well-known that this is given by the cup product. 
    The last assertion about the action of the Hecke algebra follows from \cite[Proposition 2.2.20]{10_authors}.
\end{proof}

The cup product induces a pairing
\[[\,,\,]:H^i_!(X_U,M_\mu)/(\O-\text{tors}) \times H^{d-i}_!(X_U,M_\mu^\vee)/(\O-\text{tors}) \to \O.\]
To see this, it is enough to show that the cup product induces a pairing
\[H^i_!(X_U,M_\mu) \times H^{d-i}_!(X_U,M_\mu^\vee) \to \O.\]
We thus need to show that the map
\begin{align*}
    H^i_!(X_U,M_\mu) \times H^{d-i}_!(X_U,M_\mu^\vee) &\to H^d_c(X_U,M_\mu\otimes M_\mu^\vee)\\
    (x,y) &\mapsto x_c\cup y_c
\end{align*}
is well-defined, where $x_c$ is a lift of $x$ to $H^i_c(X_U,M_\mu)$, $y_c$ is a lift of $y$ to $H^{d-i}_c(X_U,M_\mu^\vee)$, and $\cup$ is the cup product on $H^i_c(X_U,M_\mu) \times H^{d-i}_c(X_U,M_\mu^\vee)$.
If $x'_c$ (resp. $y'_c$) is another lift of $x$ (resp. $y$) to $H^i_c(X_U,M_\mu)$ (resp. $H^{d-i}_c(X_U,M_\mu^\vee)$), then $x'_c= x_c+\epsilon$ and $y'_c=y_c+\delta$, where $\epsilon \in \ker (H^i_c(X_U,M_\mu) \to H^i(X_U,M_\mu))$ and $\delta \in \ker(H^{d-i}_c(X_U,M_\mu^\vee) \to H^{d-i}(X_U,M_\mu^\vee))$.
Recall that to form the cup product of two elements in the compactly supported cohomology, we can first map one of the elements to the usual cohomoology and then take the cup product between usual and compactly supported cohomology.
It follows that 
\[x'_c\cup y'_c =x_c\cup y_c + \epsilon\cup y_c + x_c\cup\delta + \epsilon\cup\delta =x_c\cup y_c.\]

For convenience, let 
$$\overline H^i_!(X_U,M_\mu):= H^i_!(X_U,M_\mu)/(\O-\text{tors})$$ and $\overline H^{d-i}_!(X_U,M_\mu^\vee):=H^{d-i}(X_U,M_\mu^\vee)/(\O-\text{tors})$.

Let $q_1:\T^S \twoheadrightarrow \T^S(\overline H^b_!(X_U,M_\mu))$ be the quotient map.
Note that $\T^S(\overline H^b_!(X_U,M_\mu))$ is a finite $\O$-algebra, so it is the (finite) product of its localisations at maximal ideals.
Thus,
\begin{equation*}
    \overline H^b_!(X_U,M_\mu) = \bigoplus_{\substack{\m_1\in\mathrm{mSpec} \T^S \\\m_1\supset \ker q_1}}\overline H^b_!(X_U,M_\mu)_{\m_1}.
\end{equation*}
Note also that $supp_{\T^S}(\overline H^b_!(X_U,M_\mu))= V(Ann_{\T^S}(\overline H^b_!(X_U,M_\mu))) = V(\ker q_1)$, so $\overline H^b_!(X_U,M_\mu)_{\m_1}=0$ if $\m_1\not\supset \ker q_1$.
It follows that 
\begin{equation}\label{eq: decomp}
    \overline H^b_!(X_U,M_\mu) = \bigoplus_{\m_1\in \mathrm{mSpec} \T^S}\overline H^b_!(X_U,M_\mu)_{\m_1}.
\end{equation}
We have a similar decomposition for $\overline H^{t}_!(X_U,M_\mu^\vee)$.
Thus, we can restrict $[\,,\,]$ to these direct summands to get pairings 
\begin{equation*}
    \overline H^b_!(X_U,M_\mu)_{\m_1}\times \overline H^{t}_!(X_U,M_\mu^\vee)_{\m_2} \to \O
\end{equation*}
for any $\m_1,\m_2\in\mathrm{mSpec}\T^S$.
Similarly, we have decompositions for $\overline H^b(X_U,M_\mu)$ and $\overline H^{t}_!(X_U,M_\mu^\vee)$ into direct sums of their localisations at maximal ideals of $\T^S$.
Thus, we can restrict $[\,,\,]$ to these direct summands to get pairings 
\begin{equation*}
    \overline H^b(X_U,M_\mu)_{\m_1}\times \overline H^{t}_c(X_U,M_\mu^\vee)_{\m_2} \to \O
\end{equation*}
for any $\m_1,\m_2\in\mathrm{mSpec}\T^S$.

Let $\partial X_U$ denote the boundary of the Borel-Serre compactification of $X_U$.
Let $\tilde \m=\theta(\m)\subset \T^S$, which equals $\ker(\widetilde\Lambda \mod \varpi)$ by \Cref{lem: eigensystem for contragredient}, where $\tilde \Lambda:\T^S \to \O$ is the Hecke eigensystem attached to the contragredient $\tilde \pi$.
Let $\tilde\epsilon: K_\infty/K_\infty^\circ\to \{\pm 1\}$ be the character such that for every real place $v$, if $x_v\in K_v/K_v^\circ$ is non-trivial, then $\tilde\epsilon(x_v)=(-1)^{n-1}\epsilon(x_v)$.

\begin{corollary}\label{coro: pp}
    Assume $H^b(\partial X_U,M_\mu)_\m$ is $\O$-torsion free and $p>2$.
    \begin{enumerate}[(a)]
        \item \label{item: pp}Then 
        \begin{equation}\label{Hm pp}
            [\,,\,]:\overline H^b_!(X_U,M_\mu)_\m \times \overline H^{t}_!(X_U,M_\mu^\vee)_{\tilde\m} \to \O
        \end{equation}
        and 
        \[[\,,\,]:\overline H^b_!(X_U,M_\mu)_\m[\epsilon] \times \overline H^{t}_!(X_U,M_\mu^\vee)_{\tilde\m}[\tilde \epsilon] \to \O\]
        are both perfect pairings.
        \item \label{item: theta isom}$\theta$ induces\footnote{By `induces', we mean the map given by lifting an element of $\T^S(\overline H^b_!(X_U,M_\mu)_\m)$ to $\T^S$, applying $\theta$ to it, and then projecting it to $\T^S(\overline H^t_!(X_U,M_\mu^\vee)_{\tilde\m}).$} an isomorphism $\T^S(\overline H^b_!(X_U,M_\mu)_\m)\cong \T^S(\overline H^t_!(X_U,M_\mu^\vee)_{\tilde\m}).$
    \end{enumerate}
\end{corollary}

\begin{proof}
    \begin{claim}
        Let $m_1,m_2 \subset \T^S$ be maximal ideals of $\T^S$.
        Then $$[\overline H^b(X_U,M_\mu)_{\m_1} , \overline H^{t}_c(X_U,M_\mu^\vee)_{\m_2}]=0$$ unless $\m_2=\theta(\m_1)$.
        Similarly, $$[\overline H^b_!(X_U,M_\mu)_{\m_1} , \overline H^{t}_!(X_U,M_\mu^\vee)_{\m_2}]=0$$ unless $\m_2=\theta(\m_1)$.
    \end{claim}
    We shall show the first one. The second one is similar and easier.
    To show this, we let $A:= H^b(X_U,M_{\mu,\overline\Q_p})_{\m_1}, B:= H^{t}_c(X_U,M_{\mu,\overline\Q_p}^\vee)_{\m_2}$.
    Suppose $[A,B]\neq 0$.
    By \Cref{lem: M ker lambda}\ref{decomp of V} and its analogue for the compactly supported cohomology, there exist $a\in A[(\ker \alpha)^\infty], b\in B[(\ker \beta)^\infty]$ such that $[a,b]\neq 0$, where $\alpha,\beta:\T^S_{\overline\Q_p} \to \overline\Q_p$ are $\overline\Q_p$-algebra homomorphisms with $(\alpha|_{\T^S})^{-1}(\m_{\overline\Q_p})= \m_1$ and $(\beta|_{\T^S})^{-1}(\m_{\overline\Q_p})= \m_2$.
    Let $t\in \T^S$. Then $t-\alpha(t)\in\ker\alpha$, so $(t-\alpha(t))^r a=0$ for some $r\ge 1$.
    For all $y\in B[(\ker \beta)^\infty]$, we have
    \[ 0  = [(t-\alpha(t))^r a,y] = [x, (\theta(t)-\alpha(t))^r y]\]
    Thus, $(\theta(t)-\alpha(t))^r$ is not surjective as an $\overline\Q_p$-linear endomorphism of $B[(\ker \beta)^\infty]$, so it is not injective, so $(\theta(t)-\alpha(t))^r c=0$ for some $c\in B[(\ker \beta)^\infty]\setminus\{0\}$.
    By the definition of $B$, there exists $s\ge 1$ such that $(\theta(t)-\beta(\theta(t)))^s c=0$.
    Since $c\neq 0$, we must have $\alpha(t) = \beta(\theta(t))$.
    Thus, $\alpha|_{\T^S}=\beta|_{\T^S}\circ\theta$.
    It follows that $\m_1 = (\alpha|_{\T^S})^{-1}(\m_{\overline\Q_p}) = (\beta|_{\T^S}\circ\theta)^{-1}(\m_{\overline\Q_p}) = \theta(\m_2)$.

    Using this claim, we shall now deduce the first part of \ref{item: pp} by a similar argument to \cite[section 4.2.4]{Raghuram-Balasubramanyam} under our assumption that $H^b(\partial X_U,M_\mu)_\m$ is $\O$-torsion free.

    We first note that the base change of \eqref{Hm pp} to $E$
    \begin{equation}\label{E perf pair}
        [\,,\,]:H^b_!(X_U,M_{\mu,E})_\m \times H^{t}_!(X_U,M_{\mu,E}^\vee)_{\tilde\m} \to E
    \end{equation}
    is a perfect pairing, because this is non-degenerate by Poincar\'e duality and the claim proved above.
    Similarly, 
    \begin{equation}\label{E perf pair2}
        [\,,\,]:H^b(X_U,M_{\mu,E})_\m \times H^{t}_c(X_U,M_{\mu,E}^\vee)_{\tilde\m} \to E
    \end{equation}
    is a perfect pairing.

    Next, note that since $\overline H^b_!(X_U,M_\mu)_\m$ and $\overline H^{t}_!(X_U,M_\mu^\vee)_{\tilde\m}$ are both finite free $\O$-modules, to show that \eqref{Hm pp} is a perfect pairing, it is enough to show that the induced map
    \begin{equation}\label{eq: perf pair}
        \overline H^b_!(X_U,M_\mu)_\m \to \Hom_{\O}(\overline H^{t}_!(X_U,M_\mu^\vee)_{\tilde\m} , \O)
    \end{equation}
    is an isomorphism.
    To see injectivity, note that $\overline H^b_!(X_U,M_\mu)_\m \hookrightarrow \overline H^b_!(X_U,M_\mu)_\m \otimes_{\O}E = H^b_!(X_U,M_{\mu,E})_\m$ and similarly for the right hand side of \eqref{eq: perf pair}. Thus, we deduce the injectivity of \eqref{eq: perf pair} from the non-degeneracy of \eqref{E perf pair}.

    To check surjectivity of \eqref{eq: perf pair}, let $f\in \Hom_{\O}(\overline H^{t}_!(X_U,M_\mu^\vee)_{\tilde\m} , \O)$.
    Analogous to \eqref{eq: decomp}, $\overline H^{t}_!(X_U,M_\mu^\vee)$ is the direct sum of its localisations at maximal ideals of $\T^S$.
    We extend $f$ to $f': \overline H^{t}_!(X_U,M_\mu^\vee) \to \O$ by setting it to be $0$ on other summands.
    Let $q: \overline H^{t}_c(X_U,M_\mu^\vee) \twoheadrightarrow \overline H^{t}_!(X_U,M_\mu^\vee)$ be the natural map.
    By \Cref{prop: Poincare duality}, there exists $x\in \overline H^b(X_U,M_\mu)$ such that $$f'\circ q=[x,-].$$
    By the claim, we can assume that $x$ lies in the direct summand $\overline H^b(X_U,M_\mu)_\m$ of $\overline H^b(X_U,M_\mu)$.
    On the other hand, by the perfectness of \eqref{eq: perf pair}, there exists $z\in H^b_!(X_U,M_{\mu,E})_\m$ such that $f_E = [z,-]$, where $f_E$ is the base change of $f$ along $\O\to E$.
    Then
    \[f'_E\circ q_E = [z,-]\]
    as maps $H^{t}_c(X_U,M_{\mu,E}^\vee) \to E$.
    By the perfectness of \eqref{E perf pair2}, $x_E = z$,
    where $x_E$ is the image of $x$ in $H^b(X_U,M_{\mu,E})_\m$.
    In particular, $$x_E \in H^b_!(X_U,M_{\mu,E})_\m.$$
    Recall that there is a long exact sequence $$\dots\to H^i_c(X_U,M_\mu)\to H^i(X_U,M_\mu) \to H^i(\partial X_U,M_\mu)\to \dots$$
    We lift $x$ to $H^b(X_U,M_\mu)_{\m}$ and consider its image $y$ in $H^b(X_U,M_\mu)_{\m}/H^b_!(X_U,M_\mu)_{\m} \subset H^b(\partial X_U,M_\mu)_{\m}$.
    By what we have just shown, $y_E \in H^b(\partial X_U,M_{\mu,E})_{\m}$ is $0$.
    Since $H^b(\partial X_U,M_\mu)_{\m}$ is $\O$-torsion-free, $y=0$.
    Thus, $x \in \overline H^b_!(X_U,M_\mu)_\m$.

    For the second part of \ref{item: pp}, note that we have a decomposition
    \[\overline H^b_!(X_U,M_\mu)_\m = \bigoplus_{\epsilon_1\in \widehat{K_\infty/K_\infty^\circ}}\overline H^b_!(X_U,M_\mu)_{\m}[\epsilon_1]\]
    since $p>2$.
    The perfectness then follows from the first part and the proof of \cite[Proposition 3.3.1]{Raghuram-Balasubramanyam}. (The proof there works here in view of the decomposition of $H^{i}(\mathfrak g, K_\infty^0,\pi_\infty\otimes_\C M_{\mu,\C})$ in the last part of the proof of \ref{1 dim space}.)

    Part \ref{item: theta isom} follows from part \ref{item: pp} and the same argument as \cite[Corollary 2.2.21]{10_authors}, namely the commutativity of the diagram
\[\begin{tikzcd}
	{\mathbb T^S} & {End_{\mathcal O}(\overline H^b_!(X_U,M_\mu)_{\mathfrak m})} \\
	{\mathbb T^S} & {End_{\mathcal O}(\overline H^t_!(X_U,M_\mu^\vee)_{\tilde{\mathfrak m}})}
	\arrow[from=1-1, to=1-2]
	\arrow["\theta"', from=1-1, to=2-1]
	\arrow["{\text{transpose}}", from=1-2, to=2-2]
	\arrow[from=2-1, to=2-2]
\end{tikzcd}\]
\end{proof}

\begin{lemma}\label{lem: petersson}
    Let $\phi,\tilde\phi$ be the specific cusp forms in the space of cusp forms affording $\pi,\tilde\pi$ respectively which are defined in \cite[Section 2.2]{Raghuram-Balasubramanyam}.\footnote{The corresponding Whittaker vectors of $\phi,\tilde\phi$ are tensor products of local Whittaker vectors. At finite places, the local Whittaker vectors are the essential vectors. At infinite places, the local Whittaker vectors are the `cohomological vectors' as defined in \cite{Raghuram-Balasubramanyam}.}
    Define
    \[\langle \phi,\tilde\phi \rangle = \int_{A_G G(F)\backslash G(\A_F)} \phi(g)\tilde\phi(g) dg.\]
    Then 
    \[\langle \phi,\tilde\phi \rangle = \frac{\prod_{v\mid \infty}\mathfrak c_v^\sharp(w_v,\tilde w_v) \cdot L(1,\pi,\Ad^\circ)}{\alpha_F\; \p_{ram}(\pi)},\]
    where $\alpha_F:=\frac{\hat\Phi_f(0)}{n \Res_{s=1}\tilde\zeta_F(s)}$ with $\Phi_f$ the characteristic function of $\prod_{v\nmid\infty}\O_{F_v}^n$, $\hat\Phi_f$ its Fourier transform, and $\tilde\zeta_F$ the completed zeta function. The measures, $\mathfrak c_v^\sharp(w_v,\tilde w_v)$, and $\p_{ram}(\pi)$ are defined as in \cite[section 2]{Raghuram-Balasubramanyam}. Also, $L(1,\pi,\Ad^\circ)$ is the value at $1$ of the Langlands $L$-function\footnote{The Langlands $L$-function is defined as the product over all places of $F$ of the local $L$-factors. The local $L$-factor can be defined as the local $L$-factor of the corresponding Weil(-Deligne) representation under the local Langlands correspondence at a (non-)Archimedean place.}.
\end{lemma}

\begin{proof}
    Note that $\int_{A_G G(F)\backslash G(\A_F)} = \int_{Z(\A_F) G(F)\backslash G(\A_F)} \int_{A_G G(F)\backslash Z(\A_F)G(F)}$, where $Z$ is the centre of $G$.
    Also, $A_G G(F)\backslash Z(\A_F)G(F) = F^\times \backslash \A^1_F$, where $\A^1_F:=\{x\in \A_F: |x|_{\A_F}=1\}.$
    It follows that  \[\langle \phi,\tilde\phi \rangle = vol(F^\times\backslash \A_F^1)\int_{Z(\A_F)G(F)\backslash G(\A_F)} \phi(g)\tilde\phi(g) dg.\]
    The result now follows from \cite[equation (2.2.11)]{Raghuram-Balasubramanyam}.
    They obtained their result by relating the Petersson inner product with $L(1,\pi,\Ad^0)$ by the Rankin-Selberg method and using the fact that $L(s,\pi\times \tilde\pi)=\tilde\zeta_F(s)L(s,\pi,\Ad^0).$
\end{proof}

\begin{lemma}\label{lem: pairing-L}
    We have
    \[[\vartheta^\circ_{b,\epsilon},\tilde\vartheta^\circ_{t,\tilde\epsilon}] = L^{alg}(1,\pi,\ad^0,\epsilon),\]
    where $[\;,\;]$ is the pairing induced by cup product as before, $\vartheta^\circ_{b,\epsilon}$ is an $\O$-basis of $\overline H^b_!(X_U,M_\mu)_\m[\epsilon][\ker\lambda]$, $\tilde\vartheta^\circ_{t,\tilde\epsilon}$ is an $\O$-basis of $\overline H^t_!(X_U,M_\mu^\vee)_{\tilde\m}[\tilde\epsilon][\ker\tilde\lambda]$, and \[L^{alg}(1,\pi,\Ad^0,\epsilon):=\frac{L(1,\pi,\Ad^0)}{\alpha_F \,\p_{ram}(\pi)\p_\infty(\pi)\p_{\pi,b,\epsilon}\p_{\tilde\pi,t,\tilde\epsilon}}.\]
    Here, $\p_\infty(\pi)$ is defined as in \cite[equation (3.3.9)]{Raghuram-Balasubramanyam} and $\p_{\pi,b,\epsilon}, \p_{\tilde\pi,t,\tilde\epsilon}$ are defined in \Cref{defn: period}.\footnote{However, our $L^{alg}(1,\pi,\Ad^0,\epsilon)$ and periods are slightly different from that in \cite{Raghuram-Balasubramanyam} due to our different choice of $X_U$.}
\end{lemma}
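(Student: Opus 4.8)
The plan is to compute the left-hand side directly; integrality of $L^{alg}$ then falls out for free. Since $\overline H^b_!(X_U,M_\mu)_\m[\epsilon][\ker\lambda]$ is free of rank one over $\O$ by \cref{lem: M ker lambda}\ref{item: isotypic}, I may take $\vartheta^\circ_{b,\epsilon}:=\mathcal F_{\pi^\infty,\epsilon,w_\infty,b}(w^\infty)/\p_{\pi,b,\epsilon}$ with $w^\infty=\otimes_{v<\infty}w_v$ the product of essential vectors (\cref{defn: period}), and likewise $\tilde\vartheta^\circ_{t,\tilde\epsilon}:=\mathcal F_{\tilde\pi^\infty,\tilde\epsilon,\tilde w_\infty,t}(\tilde w^\infty)/\p_{\tilde\pi,t,\tilde\epsilon}$ for $\tilde\pi$; any other choice of bases differs by $\O^\times$, and since the right-hand side is in any case only defined up to $\O^\times$ (through the periods), the asserted identity is one of ideals. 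Clearing the periods and passing to $\C$-coefficients, where on cuspidal classes the pairing of \cref{coro: pp} is computed in the relative Lie algebra complex with cuspidal coefficients (using $b+t=\dim X_U$, a direct computation from the formulas for $b_n,t_n$), the claim becomes
\[[\mathcal F_{\pi^\infty,\epsilon,w_\infty,b}(w^\infty),\mathcal F_{\tilde\pi^\infty,\tilde\epsilon,\tilde w_\infty,t}(\tilde w^\infty)]=\frac{L(1,\pi,\Ad^\circ)}{\alpha_F\,\p_{ram}(\pi)\,\p_\infty(\pi)}.\]

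To prove this, I unwind \cref{defn:5 isoms}: if $\sum_j X_j^\vee\otimes\xi_j\otimes m_j$ is a relative Lie algebra cocycle representing $w_\infty\in H^b(\mathfrak g,K_\infty^\circ,\pi_\infty\otimes M_{\mu,\C})[\epsilon]$ (with $X_j^\vee\in\wedge^b(\mathfrak g/\mathfrak k_\infty)^\vee$, $\xi_j\in\pi_\infty$, $m_j\in M_{\mu,\C}$), then $\mathcal F_{\pi^\infty,\epsilon,w_\infty,b}(w^\infty)$ is represented by $\sum_j X_j^\vee\otimes\Phi_j\otimes m_j$, where $\Phi_j\in V_\pi^U$ is the image of $w^\infty\otimes\xi_j\in W(\pi)^U$ under the inverse Whittaker expansion $W(\pi)\xrightarrow{\sim}V_\pi$; similarly for $\tilde\pi$ with a cocycle $\sum_k\tilde X_k^\vee\otimes\tilde\xi_k\otimes\tilde m_k$ and cusp forms $\tilde\Phi_k$. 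Cupping the two classes, contracting the coefficients by the tautological pairing $M_{\mu,\C}\otimes M_{\mu,\C}^\vee\to\C$, and integrating over $X_U$ yields a sum over pairs $(j,k)$ of products of: a scalar depending only on the wedge of the $X_j^\vee$'s and the chosen invariant volume form on $X$; the coefficient pairing $\langle m_j,\tilde m_k\rangle$; and the global Petersson integral $\langle\Phi_j,\tilde\Phi_k\rangle$. The one delicate feature of this step is that $K_\infty^\circ$ contains $A_G$ and does not factor over the archimedean places; exactly as in the proof of \cref{1 dim space} one uses \cref{lem: character of regular alg} and the splitting $\mathfrak g/\mathfrak k_\infty=\prod_{v\mid\infty}(\mathfrak g^1_v/\mathfrak k^1_v)\times\mathfrak h$ to reduce the Lie algebra factor to a genuine place-by-place Künneth computation, and to identify the contribution of the residual $\wedge^\bullet\mathfrak h^\vee$ factor with the constant relating the integral over $G(F)\backslash G(\A_F)^1$ to the Petersson integral over $A_G G(F)\backslash G(\A_F)$ of \cref{lem: petersson}. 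This residual $\mathfrak h$-constant is precisely what makes our normalisation differ from that of \cite{Raghuram-Balasubramanyam}, who work with a different locally symmetric space.

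Every cusp form occurring has finite Whittaker function $\otimes_{v<\infty}w_v$ equal to the product of essential vectors, so \cref{lem: petersson} evaluates each Petersson integral as
\[\langle\Phi_j,\tilde\Phi_k\rangle=\frac{\prod_{v\mid\infty}\mathfrak c_v^\sharp(\xi_j,\tilde\xi_k)\cdot L(1,\pi,\Ad^\circ)}{\alpha_F\,\p_{ram}(\pi)},\]
where $\mathfrak c_v^\sharp(\xi_j,\tilde\xi_k)$ is the local archimedean integral on the Whittaker functions of $\xi_j,\tilde\xi_k$. Pulling $L(1,\pi,\Ad^\circ)/(\alpha_F\,\p_{ram}(\pi))$ out of the sum, the remaining purely archimedean quantity — assembled from the $X_j^\vee$, the $m_j$, the $\mathfrak c_v^\sharp$, the volume form, and the $\mathfrak h$-constant above — is by construction $\p_\infty(\pi)^{-1}$ in the sense of \cite[equation (3.3.9)]{Raghuram-Balasubramanyam} (adjusted by exactly that $\mathfrak h$-constant). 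This gives the displayed identity, hence the lemma after dividing by $\p_{\pi,b,\epsilon}\p_{\tilde\pi,t,\tilde\epsilon}$. Finally, since $\vartheta^\circ_{b,\epsilon}\in\overline H^b_!(X_U,M_\mu)_\m[\epsilon]$ and $\tilde\vartheta^\circ_{t,\tilde\epsilon}\in\overline H^t_!(X_U,M_\mu^\vee)_{\tilde\m}[\tilde\epsilon]$ and the pairing of \cref{coro: pp} takes values in $\O$, we conclude $L^{alg}(1,\pi,\Ad^\circ,\epsilon)\in\O$.

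The main obstacle is the middle step: giving an honest proof of this Künneth-type decomposition of the cup-and-integrate pairing on cuspidal cohomology for our specific $X_U$ — tracking signs, using $H^*_!$ via \cref{coro: pp} to kill the boundary contributions, and invoking cuspidality for convergence of the integral — and then checking that the archimedean quantity it produces matches \cite{Raghuram-Balasubramanyam}'s $\p_\infty(\pi)$ up to exactly the single $\mathfrak h$-constant, together with verifying that the dependence on the auxiliary data ($w_\infty$, $\tilde w_\infty$, the additive character $\psi$, the Haar measures) cancels throughout, so that $L^{alg}$ is well defined up to $\O^\times$. The remaining bookkeeping is carried out in \cite[section 3]{Raghuram-Balasubramanyam}, whose arguments we follow.
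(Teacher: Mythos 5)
Your proposal is correct and follows essentially the same route as the paper: both reduce the cup-product pairing of the classes $\mathcal F_{\pi^\infty,\epsilon,w_\infty,b}(w^\infty)$ and $\mathcal F_{\tilde\pi^\infty,\tilde\epsilon,\tilde w_\infty,t}(\tilde w^\infty)$ to the integral computation of \cite[section 3.3.3, p.~658]{Raghuram-Balasubramanyam}, substitute \cref{lem: petersson} for their equation (2.2.11), account for the discrepancy coming from the different choice of $X_U$ (the $A_G$/volume normalisation), and then divide by the periods. You simply spell out more of the Whittaker-to-cusp-form unwinding and the K\"unneth bookkeeping that the paper delegates to the citation.
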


\begin{proof}
    This is more or less what \cite[section 3.3.3]{Raghuram-Balasubramanyam} obtained, except that our space $X_U$ is different from the locally symmetric spaces they used. As in \cite[p.658]{Raghuram-Balasubramanyam}, we have 
    \begin{align*}
        [\p_{\pi,b,\epsilon}\vartheta^\circ_{b,\epsilon},\p_{\tilde\pi,t,\tilde\epsilon}\tilde\vartheta^\circ_{t,\tilde\epsilon}] &= \frac{1}{vol(U)} \int_{G(F) \backslash G(\A_F)/K_\infty^\circ U} \varsigma\\
        &= \frac{1}{vol(U)}\int_{G(F) \backslash G(\A_F)/A_G U} \varsigma\\
        &= \int_{G(F) \backslash G(\A_F)/A_G} \varsigma\\
        &= \frac{L(1,\pi,\Ad^0)}{\alpha_F \,\p_{ram}(\pi)\p_\infty(\pi)}
    \end{align*}
    where $\varsigma$ has the same meaning as that in \cite[p.658]{Raghuram-Balasubramanyam} and in the last equality we used \Cref{lem: petersson} instead of \cite[equation (2.2.11)]{Raghuram-Balasubramanyam}.
    Dividing both sides by $\p_{\pi,b,\epsilon}\p_{\tilde\pi,t,\tilde\epsilon}$ gives the result.
\end{proof}

\begin{remark}
    $\alpha_F$ depends only on $F$, $\p_{ram}(\pi)$ depends only on the ramified components of $\pi$, and $\p_{\infty}(\pi)$ depends only on $\pi_\infty$.
\end{remark}

We can now prove our first main theorem.
\begin{theorem}\label{thm: main}
    Let the assumptions be as at the start of \Cref{sec:4}.
    Suppose that $H^b(\partial X_U,M_\mu)_\m$ is $\O$-torsion free.
    Then 
    \[\eta_{\pi,b,\epsilon}=\eta_{\tilde\pi,t,\tilde\epsilon}=(L^{alg}(1,\pi,\Ad^0,\epsilon)).\]
\end{theorem}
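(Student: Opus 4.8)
The plan is to recognize the statement as an assembly of the structural results already in place. Set $M_1:=\overline H^b_!(X_U,M_\mu)_\m[\epsilon]$, a module over $T:=\T^S(\overline H^b_!(X_U,M_\mu)_\m)$, and $M_2:=\overline H^t_!(X_U,M_\mu^\vee)_{\tilde\m}[\tilde\epsilon]$, a module over $\tilde T:=\T^S(\overline H^t_!(X_U,M_\mu^\vee)_{\tilde\m})$; both are $\O$-torsion free, hence finite flat over $\O$, and local (being localizations of finite $\O$-algebras at the maximal ideals cut out by $\m$, resp. $\tilde\m=\theta(\m)$), and reduced since after $\otimes_\O E$ they are cut out of the semisimple algebras of \cref{lem:T acts semisimply}. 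The Hecke eigensystems $\Lambda$ and $\tilde\Lambda=\Lambda\circ\theta$ (\cref{lem: eigensystem for contragredient}) induce local $\O$-algebra homomorphisms $\lambda:T\to\O$ and $\tilde\lambda:\tilde T\to\O$, with associated idempotents $e\in T_E$ and $\tilde e\in\tilde T_E$; note that the resulting congruence ideals $\eta_\lambda(M_1)$ and $\eta_{\tilde\lambda}(M_2)$ are exactly $\eta_{\pi,b,\epsilon}$ and $\eta_{\tilde\pi,t,\tilde\epsilon}$ (the latter with the coefficient system $M_\mu^\vee$, as in \cref{lem: pairing-L}), since the congruence module sees the idempotent only through its action on the $E$-span. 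By \cref{coro: pp} --- which is exactly where the hypothesis that $H^b(\partial X_U,M_\mu)_\m$ is $\O$-torsion free enters --- the cup product induces a perfect $\O$-bilinear pairing $[\,,\,]:M_1\times M_2\to\O$, and $\theta$ induces an isomorphism $\theta_*:T\xrightarrow{\sim}\tilde T$.

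The one substantive verification is that $[\,,\,]$ meets the hypotheses of \cref{lem: pp}, namely $[eM_1,(1-\tilde e)M_2]=0$ and $[(1-e)M_1,\tilde eM_2]=0$. Since $\tilde\Lambda=\Lambda\circ\theta$, the isomorphism $\theta_*\otimes_\O E:T_E\xrightarrow{\sim}\tilde T_E$ carries the $\lambda$-factor of $T_E$ onto the $\tilde\lambda$-factor of $\tilde T_E$, hence $e\mapsto\tilde e$. The Hecke-equivariance $[tx,y]=[x,\theta(t)y]$ of the cup product (the last clause of the Poincaré-duality proposition) is a priori a statement about $t\in\T^S$, but since $T_E$ and $\tilde T_E$ are quotients of $\T^S\otimes_\O E$ acting on $(M_1)_E$ and $(M_2)_E$ compatibly with $\theta$, it descends and extends $E$-linearly to $[tx,y]=[x,\theta_*(t)y]$ for $t\in T_E$. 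Taking $t=e$ gives $[ex,y]=[x,\tilde ey]$ for all $x\in(M_1)_E$, $y\in(M_2)_E$, and the two required vanishings then follow from $\tilde e(1-\tilde e)=0$.

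Granting this, \cref{lem: pp}\ref{E/O} gives at once $\eta_{\pi,b,\epsilon}=\eta_\lambda(M_1)=\eta_{\tilde\lambda}(M_2)=\eta_{\tilde\pi,t,\tilde\epsilon}$. For the value: \cref{lem: M ker lambda}\ref{item: isotypic} shows $M_1[\ker\lambda]$ is free of rank one over $\O$, and the identical argument for $\tilde\pi$ (whose proof uses only that $M_{\mu,\C}^\vee$ is irreducible and the cohomology computation of \cref{1 dim space}, both valid verbatim for the contragredient) shows $M_2[\ker\tilde\lambda]$ is free of rank one. Taking the generators $\vartheta^\circ_{b,\epsilon}$ and $\tilde\vartheta^\circ_{t,\tilde\epsilon}$ of \cref{lem: pairing-L}, part \ref{pair basis} of \cref{lem: pp} yields $\eta_\lambda(M_1)=\eta_{\tilde\lambda}(M_2)=([\vartheta^\circ_{b,\epsilon},\tilde\vartheta^\circ_{t,\tilde\epsilon}])$, and \cref{lem: pairing-L} identifies this ideal with $(L^{alg}(1,\pi,\Ad^0,\epsilon))$, which completes the proof. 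The place where I expect the bookkeeping to be most delicate is the compatibility in the middle paragraph: checking that $e$ and $\tilde e$ really are matched by $\theta_*$ and that the cup-product equivariance survives passage to the $\m$- and $\tilde\m$-localizations, the $\epsilon$- and $\tilde\epsilon$-isotypic summands, and the quotient by $\O$-torsion --- all of which rests on \cref{coro: pp} and hence on the torsion-freeness hypothesis. Everything else is formal.
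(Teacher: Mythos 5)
Your proposal is correct and follows essentially the same route as the paper: the perfect pairing and the isomorphism induced by $\theta$ from \cref{coro: pp}, the identity $\tilde\lambda=\lambda\circ\theta$ from \cref{lem: eigensystem for contragredient} to match the idempotents and verify the hypotheses of \cref{lem: pp}, and then \cref{lem: M ker lambda} and \cref{lem: pairing-L} to evaluate the pairing on generators. The only (cosmetic) difference is that you work with the degree-$b$ Hecke algebra from the outset and observe that the congruence module is unchanged, whereas the paper keeps the full-degree algebra $T$ and passes to the image idempotents $e',\tilde e'$ acting in degrees $b$ and $t$ — these are the same bookkeeping step in different clothing.
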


\begin{proof}
    Since $H^b(\partial X_U,M_\mu)_\m$ is $\O$-torsion free, the cup product gives a perfect pairing
    \[[\,,\,]:\overline H^b_!(X_U,M_\mu)_\m[\epsilon] \times \overline H^{t}_!(X_U,M_\mu^\vee)_{\tilde\m}[\tilde \epsilon] \to \O\]
    by \Cref{coro: pp}.\footnote{This is the only place in this proof where we use the assumption that $H^b(\partial X_U,M_\mu)_\m$ is $\O$-torsion free.}
    We would like to apply \Cref{lem: pp}.
    The conditions in part (b) of that lemma is satisfied by \Cref{lem: M ker lambda} and the fact that $(\pi^\infty)^U$ and $(\tilde\pi^\infty)^U$ are one dimensional.

    Recall that $\lambda:T\to \O$ is the Hecke eigensystem for $\pi$.
    This factors through $\T^S(\overline H^b_!(X_U,M_\mu)_\m)$ because the action of the Hecke algebra on $\overline H^*_!(X_U,M_\mu)_\m$ preserves degree and $\pi$ is isomorphic to a submodule of $\overline H^b_!(X_U,M_\mu)_\m\otimes_\O \C$.

    Similar to how we defined the idempotent $e\in T_E$ using $\lambda:T\to \O$, we can define an idempotent $e'\in \T^S(\overline H^b_!(X_U,M_\mu)_\m)_E$ using the induced map on the quotient $\T^S(\overline H^b_!(X_U,M_\mu)_\m)\to \O$.
    It is clear from the definitions of $e$ and $e'$ that $e'$ is the image of $e$ under $T_E \twoheadrightarrow \T^S(\overline H^b_!(X_U,M_\mu)_\m)_E$.

    The same argument (with $H^b_!(X_U,M_\mu)_\m$ replaced by $H^t_!(X_U,M_\mu^\vee)_{\tilde\m}$) works for the contragredient $\tilde\pi$, $\tilde\lambda$, $\tilde e, \tilde e'$ and we know $\tilde e'$ is the image of $\tilde e$ under $\tilde T_E \twoheadrightarrow \T^S(\overline H^t_!(X_U,M_\mu^\vee)_{\tilde\m})_E$.

    By \Cref{lem: eigensystem for contragredient}, we know $\tilde\lambda=\lambda\circ\theta$. 
    It follows from definition and part (b) of \Cref{coro: pp} that $\theta(e')=\tilde e'$.
    Thus, for all $x\in\overline H^b_!(X_U,M_\mu)_\m[\epsilon]$, $y \in \overline H^{t}_!(X_U,M_\mu^\vee)_{\tilde\m}[\tilde \epsilon]$, we have
    \[[ex,y]=[e'x,y]=[x,\theta(e')y]=[x,\tilde e' y]=[x,\tilde e y].\]
    From this\footnote{The reason that we need to argue via $e', \tilde e'$ rather than $e,\tilde e$ directly is that we do not have the analogue of part (b) of \Cref{coro: pp} for the entire inner cohomology. We only have it for the bottom and top degrees.} and the fact that $e,e'$ are idempotents, we deduce that all the conditions of \Cref{lem: pp} are satisfied.

    By \Cref{lem: pp} and \Cref{lem: pairing-L}, 
    \[\eta_{\pi,b,\epsilon}=\eta_{\tilde\pi,t,\tilde\epsilon}= [\vartheta^\circ_{b,\epsilon},\tilde\vartheta^\circ_{t,\tilde\epsilon}] =(L^{alg}(1,\pi,\Ad^0,\epsilon)).\]
\end{proof}

\begin{remark}
    The theorem is a generalisation of \cite[Proposition 4.12 first part and Lemma 5.6(iv)]{tu}, where analogous results were obtained for $\GL_2$ over a totally real field and over an imaginary quadratic field.\footnote{At least in the totally real case, although it was not explicitly stated, they implicitly assumed that the ideal $\m$ is non-Eisenstein, as they applied the Poincaré duality result from \cite[Proposition 4.10 part 3]{tu}, which was established only under this assumption in their paper.}

An analogous result is stated in \cite[section 4]{Raghuram-Balasubramanyam}. It differs from our result in the following ways: the result there is for the product of $L^{alg}(1,\pi,\Ad^0,\epsilon)$ over all permissible $\epsilon$ while our result is for each individual permissible $\epsilon$.
Additionally, we localize at a maximal ideal throughout, which is necessary for relating congruence ideals to Selmer groups (see \Cref{thm: selmer} below).
Localisation also makes some results slightly harder to prove, but requires a weaker hypothesis that $H^b(\partial X_U,M_\mu)_\m$, rather than $H^b(\partial X_U,M_\mu)$, is $\O$-torsion free (see \Cref{lem: boundary vanishes} below for a case where this holds).  Although this weaker hypothesis was mentioned in \cite[page 669]{Raghuram-Balasubramanyam}, the necessary (small) adjustments to their proof were not worked out.
We have provided more detailed arguments for certain parts.
Furthermore, to facilitate the use of some results in \cite{10_authors} and \cite{Newton-Thorne-derived}, we work with a different locally symmetric space, preventing us from applying some results from \cite{Raghuram-Balasubramanyam} directly.
For the same reason, our $L^{alg}(1,\pi,\Ad^0,\epsilon)$ is slightly different from theirs. 
\end{remark}

\begin{remark}
    It may be possible to determine $\p_\infty(\pi)$ explicitly as a power of $2\pi i$ using techniques of \cite{Grobner-Lin}, but we have not attempted this.
    In that paper, they precisely determined some archimedean zeta-integrals by replacing $\pi$ with simpler automorphic representations $\pi'$ with $\pi_\infty \cong \pi_\infty'$. Here, 'simpler' means $\pi'$ is automorphically induced from a Hecke character or is an isobaric sum of Hecke characters.
    This approach allows them to relate the $L$-function of $\pi$ to those of Hecke characters, which, in turn, are related to CM periods by results of Blasius.
\end{remark}

Now, we will explain why this is related to congruences of automorphic representations.
Roughly speaking, if $L^{alg}(1,\pi,\Ad^0,\epsilon)$ is not a $p$-adic unit, then $\pi$ is congruent to another automorphic representation. The converse holds if the maximal ideal $\m$ is non-Eisenstein.
\begin{corollary}\label{coro: congruence}
    Let the assumptions be as at the start of \Cref{sec:4}.
    Suppose that $H^b(\partial X_U,M_\mu)_\m$ is $\O$-torsion free.
    Consider the following two statements:
    \begin{enumerate}[(a)]
        \item $\varpi\mid L^{alg}(1,\pi,\Ad^0,\epsilon)$. \label{list 1}
        \item There is a discrete automorphic representation $\pi'\not\cong \pi$ of $\GL_n(\A_F)$ with $H^*_!(X_U,M_{\mu,\C})[(\pi'^{\infty,S})^{U^S}]\neq 0$ whose Hecke eigensystem\footnote{This is defined using the fixed isomorphism $\iota:\overline\Q_p\xrightarrow{\sim} \C$ as in \Cref{lem:Lambda}. The only differences are that $(\pi'^{\infty,S})^{U^S}$ only appears in the inner cohomology but not the cuspidal cohomology, and the image of $\Lambda'$ needs not lie in $\O$, but only an integral extension.} $\Lambda':\T^S\to \overline\Q_p$ satisfies $|\Lambda(t)-\Lambda'(t)|_p<1$ for all $t\in \T^S$. \label{list 2}
    \end{enumerate}
    Then \ref{list 1} implies \ref{list 2}. 
    If $H^b_!(X_U,M_{\mu,\O})_\m[\epsilon]/\O\text{-torsion}$ is a free $T$-module, then \ref{list 2} implies \ref{list 1}. 
\end{corollary}

Note that $\pi'$ needs not be cuspidal even though we start with a cuspidal $\pi$. See \Cref{coro: cuspidal congruence} below however.

\begin{proof}
    Abusing notation, we shall identify $\overline\Q_p$ with $\C$ using our fixed isomorphism $\iota$.
    Note that $\pi'\cong \pi$ if and only if $\Lambda'=\Lambda$ by the strong multiplicity one theorem (where we regard $\Lambda$ as having codomain in $\overline\Q_p$). 

    Suppose $\varpi\mid L^{alg}(1,\pi,\Ad^0,\epsilon)$.
    By \Cref{thm: main} and \Cref{lem: M lambda rk 1}, $\eta_\pi\neq \O$.
    By \Cref{remark: congruence} there exists a $\overline\Q_p$-algebra homomorphism $\lambda':T\otimes_\O \overline\Q_p \to \overline\Q_p$ such that $\lambda'\neq \lambda\otimes_{\O}\overline\Q_p$ and $|\lambda(t)-\lambda'(t)|<1$ for all $t\in T$.
    We have a natural map
    $$\T^S(H^*_!(X_U,M_\mu))\otimes_\O \overline\Q_p = \T^S_{\overline\Q_p}(V)\to T\otimes_\O \overline\Q_p,$$
    where $V:=H^*_!(X_U,M_{\mu,\overline\Q_p})$ and $\T^S_{\overline\Q_p}(V):= im(\T^S\otimes_\O \bar\Q_p \to \End_{\bar\Q_p}(V))$.
    Composing this with $\lambda'$, we get a $\overline\Q_p$-algebra homomorphism
    \[f:\T^S_{\overline\Q_p}(V)\to \overline\Q_p.\]
    Let $\n:=\ker f$.
    Then $$V_\n\neq 0$$ because $Supp_{\T^S_{\overline\Q_p}(V)}(V)= \{\p\in Spec(\T^S_{\overline\Q_p}(V)): \p \supset Ann_{\T^S_{\overline\Q_p}(V)}(V)\}=Spec(\T^S_{\overline\Q_p}(V)).$
    By \cite[section 2.5.1]{eigenbook}, $V[\n]\neq 0$.
    By \Cref{lem:T acts semisimply} part \ref{semisimple}, there is a discrete automorphic representation $\pi'$ such that $(\pi'^{\infty,S})^{U^S}$ is isomorphic to a sub-$\T^S_{\overline\Q_p}(V)$-module of $V$ and $(\pi'^{\infty,S})^{U^S}[\n]\neq 0.$
    Note that $(\pi'^{\infty,S})^{U^S} = \otimes'_v (\pi'_v)^{U_v}$ is one dimensional over $\overline\Q_p$, so $(\pi'^{\infty,S})^{U^S}[\n]= (\pi'^{\infty,S})^{U^S}.$
    The Hecke eigensystem attached to $\pi'$ has the desired property. 

    Suppose $H^b_!(X_U,M_{\mu,\O})_\m[\epsilon]/\O\text{-torsion}$ is a free $T$-module, say, of rank $d$, and there is a $\pi'$ satisfying the statement of the corollary.
    Note that $d\ge 1$ by \Cref{lem: M ker lambda}\ref{item: isotypic}.
    By freeness, $\eta_{\pi,b,\epsilon}=\eta_\pi^d$, so by \Cref{thm: main}, it suffices to show that $\eta_\pi\neq \O$.
    By \Cref{lem:congruence}, it suffices to show that there is an $\O$-algebra homomorphism $\lambda':T\to \overline\Q_p$ with $\lambda\neq \lambda'$ and $|\lambda(t)-\lambda'(t)|<1$ for all $t\in T$.
    Equivalently, we need to show that there is an $\O$-algebra homomorphism $\Lambda'':\T^S\to \overline\Q_p$ that factors through $\T^S(H^*_!(X_U,M_{\mu}))$ with $\Lambda\neq \Lambda''$ and $|\Lambda(t)-\Lambda''(t)|<1$ for all $t\in \T^S$, because any such $\Lambda''$ necessarily factors through $T$.
    For this, we can take $\Lambda''$ to be the Hecke eigensystem attached to $\pi'$.
\end{proof}

\begin{remark}
    A version of \Cref{coro: congruence} appears in \cite[Theorem 4.3.1]{Raghuram-Balasubramanyam}, but the conditional converse is not stated explicitly and is not proved.
An analogue of \Cref{coro: congruence} in the case of $\GL_2$ is also proved in \cite[Theorem 5.25]{namikawa_congruence_2015} under certain conditions for minimal and ordinary eigenforms.

We think the condition for the converse, namely that $H^b_!(X_U,M_{\mu,\O})_\m[\epsilon]/\O\text{-torsion}$ is a free $T$-module, is not unreasonable. When $F$ is a CM field, it should be possible to verify this using the Taylor-Wiles method if $\m$ is non-Eisenstein and the Galois representation attached to $\pi$ is a minimally ramified deformation of its residual representation.
However, under our current knowledge of Galois representations, such an approach will require a lot of extra assumptions and conjectures (such as the vanishing of $H^i(X_U,k)_\m$ outside the cuspidal range, existence of Hecke algebra valued Galois representations (without nilpotent ideals), local-global compatibility of such representations). Therefore, we do not pursue this approach here. See, however, \cite{hansen_minimal} for the $\GL_2$ case.\footnote{We think that in that (well-written) paper, it is necessary for $p\ge 7$ instead of $p\ge 3$ as stated. This ensures the image of the residual Galois representation is enormous, guaranteeing the existence of Taylor-Wiles primes. Also, it seems that for the first equation on page 8 to hold, one should patch the (derived) dual of $C^\bullet_n$ rather than $C^\bullet_n$ itself.} It may also be possible to extend the freeness to the non-minimal case using the methods of \cite{Manning1,Manning2}.
\end{remark}

Now, assume in addition that $F$ is a CM field or a totally real number field, and $S$ contains $\{\text{places }w: w|_{\Q} \text{ is ramified in $F$ or }w|_{\Q}=p\}$. (Recall that we also make the assumption that $S$ contains all $v$ such that $\pi_v$ is ramified.)
By \cite[Theorem A]{HLTT}, there is a Galois representation $$\rho_\pi:\Gal(\overline F/F) \to \GL_n(\overline \Q_p)$$ attached to $\pi$ such that for all $v\notin S$ of $F$, the characteristic polynomial of the image of $\Frob_v$ is 
\[X^n-T_{v,1}X^{n-1}+\dots+(-1)^i q_v^{i(i-1)/2}T_{v,i}X^{n-i}+\dots+ (-1)^n q_v^{n(n-1)/2}T_{v,n},\]
where $T_{v,i}=[\GL_n(\O_{F_v})diag(\varpi_v,\dots,\varpi_v,1,\dots,1)\GL_n(\O_{F_v})]$ with $\varpi_v$ appearing $i$ times.
From now on, we assume that $\m$ is \emph{non-Eisenstein}, i.e. the residual Galois representation $\overline\rho_\pi:\Gal(\overline F/F)\to \GL_n(\overline{ \mathbb F}_p)$ is (absolutely) irreducible.

\begin{lemma}\label{lem: boundary vanishes}
    Let the assumptions be as above (so in particular $\m$ is non-Eisenstein).
    Then 
    \begin{enumerate}[(a)]
        \item $H^*(\partial X_U,M_{\mu})_\m=0$,
        \item $H^*(X_U,M_{\mu})_\m=H^*_!(X_U,M_{\mu})_\m$,\label{two equal}
        \item $T=\T^S(H^*(X_U,M_{\mu}))_{\m}/\O\text{-tors}$,\label{alt T}
        \item $H^*(X_U,M_{\mu,\C})_\m=H^*_!(X_U,M_{\mu,\C})_\m=H^*_{cusp}(X_U,M_{\mu,\C})_\m$. \label{all equal}
    \end{enumerate}
\end{lemma}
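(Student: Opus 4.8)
The plan is to derive part~(a) from the boundary-vanishing theorem of Newton--Thorne and to obtain (b), (c), (d) from it by formal arguments.

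\textbf{Part (a).} The locally symmetric space $X_U$ and the coefficient system $M_\mu$ have been set up precisely so that \cite[Theorem 4.2]{Newton-Thorne-derived} applies: under the running hypotheses that $F$ is a CM field containing an imaginary quadratic field and that $S$ is pulled back from a set of rational places containing $p$ and the ramification of $F/\Q$, any non-Eisenstein maximal ideal $\m$ of $\T^S(H^*(X_U,M_\mu))$ satisfies $R\Gamma(\partial X_U,M_\mu)_\m\simeq 0$, equivalently $H^*(\partial X_U,M_\mu)_\m=0$ in every degree. The mechanism, which I would only recall briefly, is that the Borel--Serre boundary is stratified by proper standard parabolics $P\subsetneq\GL_n$; the cohomology of each stratum is built by induction from the cohomology of the locally symmetric spaces of the Levi factors, so the Hecke eigensystems occurring there are parabolically induced. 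By the construction of Galois representations over CM fields \cite{Scholze_torsion} together with the relevant local--global compatibility, the residual representation attached to any maximal ideal in the support of $H^*(\partial X_U,M_\mu)$ is a nontrivial direct sum and hence reducible; since $\bar\rho_\m$ is assumed absolutely irreducible, $\m$ lies outside that support and the localization vanishes.

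\textbf{Parts (b) and (c).} For (b), consider the long exact sequence of the pair $(\overline X_U,\partial X_U)$, where $\overline X_U$ is the Borel--Serre compactification; under the identifications $H^*_c(X_U,M_\mu)\cong H^*(\overline X_U,\partial X_U;M_\mu)$ and $H^*(X_U,M_\mu)\cong H^*(\overline X_U,M_\mu)$ it reads $\cdots\to H^i_c(X_U,M_\mu)\to H^i(X_U,M_\mu)\to H^i(\partial X_U,M_\mu)\to\cdots$. Localization at $\m$ is exact, so by (a) the map $H^i_c(X_U,M_\mu)_\m\to H^i(X_U,M_\mu)_\m$ is an isomorphism for every $i$, and since forming an image commutes with localization, $H^i_!(X_U,M_\mu)_\m=\mathrm{im}\big(H^i_c(X_U,M_\mu)_\m\to H^i(X_U,M_\mu)_\m\big)=H^i(X_U,M_\mu)_\m$. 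For (c), both $H^*_!(X_U,M_\mu)$ and $H^*(X_U,M_\mu)$ are finitely generated over $\O$, hence over $\T^S$; for finitely generated modules the annihilator commutes with localization, so (b) yields $\mathrm{Ann}_{\T^S}(H^*_!(X_U,M_\mu))_\m=\mathrm{Ann}_{\T^S}(H^*(X_U,M_\mu))_\m$. Therefore the natural surjection $\T^S(H^*(X_U,M_\mu))\twoheadrightarrow\T^S(H^*_!(X_U,M_\mu))$ becomes an isomorphism after localizing at $\m$, and quotienting by $\O$-torsion identifies $T=\T^S(H^*_!(X_U,M_{\mu,\O}))_\m/\O\text{-tors}$ (the definition from \cref{lem: small lambda}) with $\T^S(H^*(X_U,M_\mu))_\m/\O\text{-tors}$.

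\textbf{Part (d).} Since $M_{\mu,\C}=M_{\mu,\O}\otimes_\O\C$ and $\C$ is flat over $\O$ via $\iota$, ordinary and inner cohomology both commute with $\otimes_\O\C$ and this commutes with localization at $\m$; base-changing (b) gives $H^*(X_U,M_{\mu,\C})_\m=H^*_!(X_U,M_{\mu,\C})_\m$. For the remaining equality, recall from \cref{lem:T acts semisimply} that $H^*_!(X_U,M_{\mu,\C})$ decomposes along the set $\Pi_d$ of discrete automorphic representations, the cuspidal ones cutting out exactly $H^*_{cusp}(X_U,M_{\mu,\C})$. A non-cuspidal discrete automorphic representation of $\GL_n(\A_F)$ is a Speh representation $\mathrm{Speh}(\sigma,k)$ with $k\ge 2$, whose associated residual Galois representation has the form $\bigoplus_{j=0}^{k-1}\bar\rho_\sigma(j)$, a direct sum of $k\ge 2$ summands and hence reducible, so the maximal ideal it determines is Eisenstein. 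As $\m$ is non-Eisenstein, no such representation contributes to the $\m$-localization, whence $H^*_!(X_U,M_{\mu,\C})_\m=H^*_{cusp}(X_U,M_{\mu,\C})_\m$. The main obstacle is part~(a): it is the only step that invokes Galois representations attached to the (possibly torsion) cohomology of $\GL_n$ over CM fields and their local--global compatibility along the boundary, which is exactly why the hypotheses on $F$ and $S$ are imposed; matching the conventions of \cite{Newton-Thorne-derived} to ours (in particular the normalisation of $X_U$ by $K_\infty^\circ$ and the coefficient module $M_\mu$) is routine bookkeeping, and once (a) is granted the rest is formal.
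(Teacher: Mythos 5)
Your overall strategy coincides with the paper's: part (a) comes from \cite[Theorem 4.2]{Newton-Thorne-derived}, (b) from the long exact sequence of the Borel--Serre pair, (c) by comparing annihilators/images after localization, and (d) by base change plus an automorphic input. There is, however, one technical step in part (a) that you elide and that the paper carries out explicitly: the Newton--Thorne vanishing theorem applies to \emph{smooth} $\O[U_S]$-modules that are finite as $\O$-modules, and $M_{\mu,\O}$ is not a smooth $U_S$-module (the stabilizers of vectors in an algebraic representation of positive weight are not open), so the theorem cannot be invoked directly for $M_\mu$. The paper instead applies it to $M_{\mu,\O/\varpi}=M_\mu\otimes_\O\O/\varpi$, which is smooth because the action factors through $\GL_n(\O/\varpi)$, and then deduces $H^*(\partial X_U,M_\mu)_\m=0$ from the long exact sequence of $0\to M_\mu\xrightarrow{\varpi}M_\mu\to M_{\mu,\O/\varpi}\to 0$ together with Nakayama's lemma applied to the finitely generated $\O$-module $H^*(\partial X_U,M_\mu)_\m$. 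You should insert this reduction; calling it ``routine bookkeeping'' is fair, but as written your appeal to the theorem for $M_\mu$ itself is not licensed by its hypotheses.

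For part (d) you take a genuinely different route for the equality $H^*_!{}_{,\m}=H^*_{cusp}{}_{,\m}$: you use the M{\oe}glin--Waldspurger description of the discrete spectrum and argue that Speh constituents have reducible residual Galois representations, hence support only Eisenstein maximal ideals. The paper instead cites \cite[Theorem 2.4.10]{10_authors}, which uses Franke's decomposition to prove the stronger statement $H^*(X_U,M_{\mu,\C})_\m=H^*_{cusp}(X_U,M_{\mu,\C})_\m$ directly, and then sandwiches $H^*_!$. Your argument is a standard alternative, but note that it quietly uses the existence and shape of (residual) Galois representations attached to the cuspidal data $\sigma$ underlying $\mathrm{Speh}(\sigma,k)$ and a strong-multiplicity-one separation of the cuspidal from the residual spectrum; if you prefer not to justify these, the citation of \cite[Theorem 2.4.10]{10_authors} is the cleaner path. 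Parts (b) and (c) are fine and match the paper's argument (the paper phrases (c) via the isomorphism $\T^S(H)_\m/\O\text{-tors}\cong\T^S(\overline{H_\m})$ from \cref{tem}, which is equivalent to your annihilator computation).
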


\begin{proof}
    The key input for the first part is \cite[Theorem 4.2]{Newton-Thorne-derived}, which states that for every smooth $\O[U_S]$-module $A$ that is finite as an $\O$-module, $H^*(\partial X_U,A)_\m=0.$
    They proved this using the fact that $\partial X_U$ admits a stratification with strata indexed by conjugacy classes of proper parabolic subgroups of $\GL_n$ and by an in depth analysis of the cohomology of each stratum.

    Recall that by \cite[page 19]{Newton-Thorne-derived}, $M_\mu \otimes_\O \O/\varpi=M_{\mu,\O/\varpi},$ which receives an action of $\GL_n(\O/\varpi)$, compatible with that of $\GL_n(\O)$.
    Thus $M_{\mu,\O/\varpi}$ is a smooth $\O[U_S]$-module that is finite as $\O$-module, so $$H^*(\partial X_U,M_{\mu,\O/\varpi})_\m=0.$$
    We get the desired result by considering the long exact sequence associated to $$0\to M_\mu \xrightarrow{\varpi} M_\mu \to M_{\mu,\O/\varpi} \to 0$$
    and applying the Nakayama lemma to the finite $\O$-module $H^*(\partial X_U,M_{\mu})_\m=0.$

    Part \ref{two equal} now follows from the long exact sequence
    \[\dots\to H^i_c(X_U,M_\mu) \to H^i(X_U,M_\mu) \to H^i(\partial X_U,M_\mu)\to \dots\]

    For part \ref{alt T}, note that $T^S(H)_\m/\O\text{-tors}\cong \T^S(\overline{H_\m})$ for $H\in\{H^*(X_U,M_{\mu}),H^*_!(X_U,M_{\mu})\}$ by the same proof as \Cref{tem}

    For part \ref{all equal}, it is proved in \cite[Theorem 2.4.10]{10_authors} using Franke's decomposition of $H^*(X_U,M_{\mu,\C})$ via automorphic forms that $H^*(X_U,M_{\mu,\C})_\m=H^*_{cusp}(X_U,M_{\mu,\C})_\m.$
    As $H^*_{!}(X_U,M_{\mu,\C})_\m$ is always sandwiched between these two groups, these groups are all equal. (The first equality also follows from part \ref{two equal}.)
\end{proof}

For readers' convenience, we restate our running assumptions.
\begin{corollary}\label{coro: cuspidal congruence}
    Let the assumptions be as at the start of \Cref{sec:4}.
    Suppose in addition that $F$ is a CM or totally real number field, and $S$ contains $\{\text{places }w: w|_{\Q} \text{ is ramified in $F$ or }w|_{\Q}=p\}$ (and all $v$ such that $\pi_v$ is ramified).

    Assume that $\m$ is non-Eisenstein.
    Consider the following two statements:
    \begin{enumerate}[(a)]
        \item \[\varpi\mid L^{alg}(1,\pi,\Ad^0,\epsilon).\] \label{list a}
        \item There is a cohomological cuspidal automorphic representation $\pi'\not\cong \pi$ of weight $\iota\mu$ of $\GL_n(\A_F)$ with $(\pi')^U\neq 0$ whose Hecke eigensystem $\Lambda':\T^S\to \overline\Q_p$ satisfies $|\Lambda(t)-\Lambda'(t)|_p<1$ for all $t\in \T^S$. \label{list b}
    \end{enumerate}
    Then \ref{list a} implies \ref{list b}. 
    If $H^b_!(X_U,M_{\mu,\O})_\m[\epsilon]/\O\text{-torsion}$ is a free $T$-module, then \ref{list b} implies \ref{list a}. 
\end{corollary}

\begin{proof}
    The proof is just a slight variation of that of \Cref{coro: congruence}.
    Abusing notation, we shall identify $\overline\Q_p$ with $\C$ using our fixed isomorphism $\iota$.
    Suppose $\varpi\mid L^{alg}(1,\pi,\Ad^0,\epsilon)$.
    By \Cref{thm: main} and \Cref{lem: M lambda rk 1}, $\eta_\pi\neq \O$.
    By \Cref{remark: congruence} there exists a $\overline\Q_p$-algebra homomorphism $\lambda':T\otimes_\O \overline\Q_p \to \overline\Q_p$ such that $\lambda'\neq \lambda\otimes_{\O}\overline\Q_p$ and $|\lambda(t)-\lambda'(t)|<1$ for all $t\in T$.
    We have a natural map 
    $$\T^S_{\overline\Q_p}(V)\to T\otimes_\O \overline\Q_p=\T^S_{\bar\Q_p}(H^*_{cusp}(X_U,M_{\mu,\bar\Q_p})_\m),$$
    where the last equality is by \Cref{lem: boundary vanishes}, $V:=H^*_{cusp}(X_U,M_{\mu,\overline\Q_p})$, and $\T^S_{\overline\Q_p}(V):= im(\T^S\otimes_\O \bar\Q_p \to \End_{\bar\Q_p}(V))$.
    Composing this with $\lambda'$, we get a $\overline\Q_p$-algebra homomorphism
    \[f:\T^S_{\overline\Q_p}(V)\to \overline\Q_p.\]
    Let $\n:=\ker f$.
    Then $$V_\n\neq 0$$ because $Supp_{\T^S_{\overline\Q_p}(V)}(V)= \{\p\in Spec(\T^S_{\overline\Q_p}(V)): \p \supset Ann_{\T^S_{\overline\Q_p}(V)}(V)\}=Spec(\T^S_{\overline\Q_p}(V)).$
    By \cite[section 2.5.1]{eigenbook}, $V[\n]\neq 0$.
    Thus, there is a cohomological cuspidal automorphic representation $\pi'$ of weight $\iota\mu$ such that $(\pi'^{\infty,S})^{U^S}$ is isomorphic to a sub-$\T^S_{\overline\Q_p}(V)$-module of $V$ and $(\pi'^{\infty,S})^{U^S}[\n]\neq 0.$
    Note that $(\pi'^{\infty,S})^{U^S}= \otimes'_v (\pi'_v)^{U_v}$ is one dimensional over $\overline\Q_p$, so $(\pi'^{\infty,S})^{U^S}[\n]= (\pi'^{\infty,S})^{U^S}.$
    The Hecke eigensystem attached to $\pi'$ has the desired property. 

    When $H^b_!(X_U,M_{\mu,\O})_\m[\epsilon]/\O\text{-torsion}$ is a free $T$-module, the converse follows from \Cref{coro: congruence} because if $\pi'$ is a cohomological cuspidal automorphic representation $\pi'$ of weight $\iota\mu$ of $\GL_n(\A_F)$ with $(\pi')^U\neq 0$, then $\pi'$ is a discrete automorphic representation and $0\neq H^b_{cusp}(X_U,M_{\mu,\C})[(\pi'^{\infty,S})^{U^S}] \subset H^b_!(X_U,M_{\mu,\C})[(\pi'^{\infty,S})^{U^S}]$.
\end{proof}

\subsection{Selmer groups}\label{sec: selmer}
We now illustrate how to combine the results above with deformation theory to obtain some Bloch-Kato type results relating Selmer groups and $L$-functions.
We use the same definitions and notation of local and global deformation problems as in \cite[section 6.2.1]{10_authors} and we will always take $\Lambda_v=\O$ for all $v\in S$.
In particular, $\bar\rho:G_{F,S}\to \GL_n(k)$ is absolutely irreducible, $\mathcal D_v$ is a local deformation problem for each $v\in S$,
$$\mathcal S=(\bar\rho,S,\{\O\}_{v\in S},\{\mathcal D_v\}_{v\in S})$$ is a global deformation problem, and
$R_{\mathcal S}$ is the ring representing the deformation functor of type $\mathcal S$.

Fix $\rho:G_{F,S} \to \GL_n(\O)$ a lifting of $\bar\rho$ of type $\mathcal S$.
For each $m\ge 1$, let 
$$\O_m:= \O \oplus \frac{\pi^{-m}\O}{\O} \epsilon$$
with multiplication given by $(a,b\epsilon)(c,d\epsilon)=(ac,(bc+ad)\epsilon)$.
This is a local $\O$-algebra and there is a natural map $\O_m\twoheadrightarrow \O$ given by projection to the first factor.
\begin{definition}
    We let $$\mathcal L^1_v(\pi^{-m}\O/\O)$$
be the preimage of $\mathcal D_v(\O_m)$  under the isomorphism
\[Z^1\left(G_{F_v},\Ad\rho\otimes_\O \frac{\pi^{-m}\O}{\O}\right)\xrightarrow{\sim}
\{\text{liftings } G_{F_v}\to \GL_n(\O_m)\text{ of }\rho|_{G_{F_v}}\}\]
given by $c\mapsto (1+c\epsilon)\rho|_{G_{F_v}},$
where $Z^1$ means the group of continuous $1$-cocycles.
\end{definition}

We know 
\begin{equation}\label{eq:cocycle-lifting}
    Z^1(G_{F_v},\Ad\rho\otimes_\O E/\O)\xrightarrow{\sim}
\{\text{liftings } G_{F_v}\to \GL_n(\O\oplus \dfrac{E}{\O}\epsilon)\text{ of }\rho|_{G_{F_v}}\}
\end{equation}
Since\footnote{Note that $\Ad\rho\otimes_\O E/\O$ is a discrete $G_{F_v}$-module.} $Z^1(G_{F_v},\Ad\rho\otimes_\O E/\O) = \varinjlim_m Z^1(G_{F_v},\Ad\rho\otimes_\O \frac{\pi^{-m}\O}{\O})$, any lifting $G_{F_v}\to \GL_n(\O\oplus \dfrac{E}{\O}\epsilon)\text{ of }\rho|_{G_{F_v}}$ necessarily has image in $\GL_n(\O_m)$ for some $m\ge 1$.
\begin{definition}\footnote{Since $\O\oplus \frac{E}{\O} \epsilon$ is not a complete local ring, the term 'of type $\mathcal D_v$' is not defined for liftings to $\GL_n(\O\oplus \frac{E}{\O} \epsilon)$ a priori.}
    A lifting $G_{F_v}\to \GL_n(\O\oplus \dfrac{E}{\O}\epsilon)\text{ of }\rho|_{G_{F_v}}$ is of type $\mathcal D_v$ if it is of type $\mathcal D_v$ when it is regarded as a lift with codomain in $\GL_n(\O_m)$ for some $m$ (or, equivalently, for all $m$ for which $\GL_n(\O_m)$ contains the image of the lift.)
\end{definition}

The following is immediate from the definitions.
\begin{defn/lemma}
    The following subgroups of $Z^1(G_{F_v},\Ad\rho\otimes_\O E/\O)$ are equal. We denote them by $\mathcal L^1_v(E/\O)$.
    \begin{enumerate}[(i)]
        \item $\varinjlim_m \mathcal L^1_v(\pi^{-m}\O/\O)$
        \item preimage of $\{\text{liftings } G_{F_v}\to \GL_n(\O\oplus \dfrac{E}{\O}\epsilon)\text{ of }\rho|_{G_{F_v}}\text{ of type }\mathcal D_v\}$ under the isomorphism \eqref{eq:cocycle-lifting}.
    \end{enumerate}
\end{defn/lemma}

Since $\rho|_{G_{F_v}}$ is of type $\mathcal D_v$, $a\rho|_{G_{F_v}} a^{-1}$ is also of type $\mathcal D_v$ for all $a\in \ker(\GL_n(\O_m)\to \GL_n(\O))$ for all $m$ by definition of local deformation problem. It follows that $\mathcal L^1_v(\pi^{-m}\O/\O)$ and $\mathcal L^1_v(E/\O)$ both contain the group of $1$-boundaries.

\begin{definition}
    We define $\mathcal L_v(\pi^{-m}\O/\O)$ to be the image of $\mathcal L^1_v(\pi^{-m}\O/\O)$ under $Z^1\to H^1$.
    Similarly, we define $\mathcal L_v(E/\O)$ to be the image of $\mathcal L^1_v(E/\O)$ under $Z^1\to H^1$. Equivalently, by exactness of direct limits, $\mathcal L_v(E/\O)=\varinjlim_m \mathcal L_v(\pi^{-m}\O/\O).$

    We also define the Selmer groups
    \[H^1_{\mathcal S}\left(\Ad\rho\otimes_\O \frac{\pi^{-m}\O}{\O}\right):= \left\{c\in H^1\left(G_{F,S},\Ad\rho\otimes_\O \frac{\pi^{-m}\O}{\O}\right):c_v\in \mathcal L_v(\pi^{-m}\O/\O)\;\forall v\in S\right\}\]
    and
    \[H^1_{\mathcal S}\left(\Ad\rho\otimes_\O \frac{E}{\O}\right):= \left\{c\in H^1\left(G_{F,S},\Ad\rho\otimes_\O \frac{E}{\O}\right):c_v\in \mathcal L_v(E/\O)\;\forall v\in S\right\},\]
    where $c_v$ is the restriction of $c$ to $G_{F_v}$.
    It is easy to verify that 
    \[H^1_{\mathcal S}\left(\Ad\rho\otimes_\O \frac{E}{\O}\right) = \varinjlim_m H^1_{\mathcal S}\left(\Ad\rho\otimes_\O \frac{\pi^{-m}\O}{\O}\right).\]
\end{definition}

\begin{lemma}\label{lem:tangent-Selmer}
    The strict equivalence class $[\rho]$ of $\rho$ gives rise to a local $\O$-algebra homomorphism $R_{\mathcal S}\xrightarrow{\theta} \O$.
    Let $\p:=\ker\theta$. Then
    \[\Hom_\O(\p/\p^2, E/O) \cong H^1_{\mathcal S}(\Ad\rho \otimes_\O E/\O),\]
\end{lemma}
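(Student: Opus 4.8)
The statement is a standard fact in deformation theory (it goes back to Mazur), but the twist here is that the Selmer condition is built from a non-complete-local coefficient ring $\O\oplus\frac{E}{\O}\epsilon$ and its finite-level truncations $\O_m$, so the plan is to first prove the finite-level statement and then pass to the limit. Concretely, I would first establish, for each $m\ge 1$, an isomorphism $\Hom_\O(\p/\p^2,\pi^{-m}\O/\O)\cong H^1_{\mathcal S}(\Ad\rho\otimes_\O \pi^{-m}\O/\O)$, and then take $\varinjlim_m$, using that direct limits are exact, that $\Hom_\O(\p/\p^2,-)$ commutes with the relevant colimit since $\p/\p^2$ is a finitely generated $\O$-module, and that $H^1_{\mathcal S}(\Ad\rho\otimes_\O E/\O)=\varinjlim_m H^1_{\mathcal S}(\Ad\rho\otimes_\O \pi^{-m}\O/\O)$ as noted in the excerpt.

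For the finite level, the key device is that an $\O$-algebra homomorphism $\phi\colon R_{\mathcal S}\to \O_m$ reducing mod $\pi^{-m}\O/\O$ to $\theta$ is the same datum as an $\O$-linear map $\p/\p^2\to \pi^{-m}\O/\O$: indeed such $\phi$ factors through $R_{\mathcal S}/\p^2$ (since the square of the augmentation ideal of $\O_m$ is killed), and $R_{\mathcal S}/\p^2\cong \O\oplus\p/\p^2$ with $\p/\p^2$ square-zero, so $\Hom_{\O\text{-alg},\,\theta}(R_{\mathcal S},\O_m)\cong\Hom_\O(\p/\p^2,\pi^{-m}\O/\O)$. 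On the other hand, by the universal property of $R_{\mathcal S}$, such $\phi$ (up to strict equivalence over $\O_m$ trivial on the $\O$-quotient, i.e.\ conjugation by $\ker(\GL_n(\O_m)\to\GL_n(\O))$) correspond bijectively to strict equivalence classes of liftings $G_{F,S}\to\GL_n(\O_m)$ of type $\mathcal S$ lifting $\rho$; writing such a lifting as $(1+c\epsilon)\rho$ identifies it with a cocycle $c\in Z^1(G_{F,S},\Ad\rho\otimes_\O\pi^{-m}\O/\O)$, conjugation corresponds to adding a coboundary, the condition "of type $D_v$ at each $v\in S$" corresponds exactly to $c_v\in\mathcal L_v(\pi^{-m}\O/\O)$ by the definition of $\mathcal L^1_v$ and its image $\mathcal L_v$ in $H^1$, and the condition "unramified outside $S$ / lifting $\rho$" is automatic. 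Chaining these bijections gives $\Hom_\O(\p/\p^2,\pi^{-m}\O/\O)\cong H^1_{\mathcal S}(\Ad\rho\otimes_\O\pi^{-m}\O/\O)$, and one checks it is $\O$-linear (functorial in $m$, compatible with the transition maps $\O_m\hookrightarrow\O_{m+1}$).

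The main obstacle is bookkeeping rather than conceptual: one must be careful that the framing/conjugation ambiguity on the Galois side matches the "strict equivalence" used to define $R_{\mathcal S}$ (and that the relevant $H^0$ terms, e.g.\ $H^0(G_{F,S},\Ad\rho\otimes\pi^{-m}\O/\O)$, are absorbed correctly when passing from $Z^1$ to $H^1$ — here absolute irreducibility of $\bar\rho$ ensures the scalars behave), and that the local conditions $\mathcal L_v$ are genuinely $\O$-submodules containing the coboundaries, which is exactly the remark recorded just before the statement. Since all of these compatibilities are recorded in the excerpt's definitions, the proof is essentially an assembly of them; I would present it as: (1) $R_{\mathcal S}/\p^2\cong\O\oplus\p/\p^2$ and hence $\Hom_\O(\p/\p^2,A)\cong\{$type-$\mathcal S$ liftings to $\O\oplus A\epsilon$ of $\rho\}/\sim$ for $A=\pi^{-m}\O/\O$; (2) the latter $\cong H^1_{\mathcal S}(\Ad\rho\otimes_\O A)$ via $(1+c\epsilon)\rho\leftrightarrow c$; (3) take $\varinjlim_m$.
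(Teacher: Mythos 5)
Your proposal is correct and follows essentially the same route as the paper's own (sketched) proof: reduce to the finite-level isomorphism $\Hom_\O(\p/\p^2,\varpi^{-m}\O/\O)\cong H^1_{\mathcal S}(\Ad\rho\otimes_\O\varpi^{-m}\O/\O)$ via the chain cocycles $\leftrightarrow$ liftings to $\O_m$ $\leftrightarrow$ deformations $\leftrightarrow$ maps $R_{\mathcal S}\to\O_m$ lifting $\theta$, with absolute irreducibility (centralizer $=\O^\times$) handling the conjugation ambiguity, and then pass to the colimit using that $\p/\p^2$ is finitely generated over $\O$. No substantive differences.
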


\begin{proof}
    This is well-known so we will just sketch a proof.
    Note that $\p/\p^2$ is a finitely generated $R_{\mathcal S}/\p=\O$-module, so any $\O$-algebra homomorphism $\p/\p^2 \to E/\O$ has image contained in $\varpi^{-m}\O/\O$ for some $m\ge 1$.
    Thus, if we know \[\Hom_\O(\p/\p^2, \varpi^{-m}\O/\O) \cong H^1_{\mathcal S}(\Ad\rho \otimes_\O \varpi^{-m}\O/\O)\]
    for all $m\ge 1$, then taking colimit will give the desired result.
    This follows from the chain of isomorphisms
    \begin{align}
        &\;\;\;H^1_{\mathcal S}(\Ad\rho \otimes_\O \varpi^{-m}\O/\O)\nonumber\\
        \cong &\;\{\text{liftings } G_{F,S} \to \GL_n(\O_m) \text{ of }\rho\text{ of type }\mathcal S\}/\ker(\GL_n(\O_m)\to \GL_n(\O)) \label{eq:lifts}\\
        \cong &\;\{\text{deformations } G_{F,S} \to \GL_n(\O_m) \text{ of }[\rho]\text{ of type }\mathcal S\}\label{eq: deformations}\\
       \cong &\;\{f\in\Hom_\O(R_{\mathcal S},\O_m): f\pmod\epsilon=\theta\}\nonumber\\
       \cong &\;\Hom_\O(\p/\p^2, \varpi^{-m}\O/\O).\nonumber
    \end{align}
    In \eqref{eq:lifts}, $\ker(\GL_n(\O_m)\to \GL_n(\O))$ acts on $\{\text{liftings } G_{F,S} \to \GL_n(\O_m) \text{ of }\rho\text{ of type }\mathcal S\}$ by conjugation.
    In \eqref{eq: deformations}, by deformations of $[\rho]$, we mean deformations of $\bar\rho$ whose pushforward to $\GL_n(\O)$ is $[\rho]$.
    To show the bijectivity of \eqref{eq:lifts} and \eqref{eq: deformations}, note that the centralizer of the image of $\rho$ in $\GL_n(\O)$ is $\O^\times$ by \cite[lemma 2.1.8]{CHT} and $\bar\rho$ is absolutely irreducible. 
    The other steps are easy.
\end{proof}

\begin{theorem}\label{thm: selmer}
    Let the assumptions be as at the start of \Cref{sec:4}.
    Suppose in addition that $F$ is a CM or totally real number field, and $S$ contains $\{\text{places }w: w|_{\Q} \text{ is ramified in $F$ or }w|_{\Q}=p\}$ (and all $v$ such that $\pi_v$ is ramified).

    Assume $\m$ is non-Eisenstein.
    Then there is a continuous Galois representation 
    \[\rho_\m:G_{F,S} \to \GL_n(T)\]
    such that for all $v\notin S$ of $F$, the characteristic polynomial of $\rho_\m(\Frob_v)$ is 
    \[X^n-T_{v,1}X^{n-1}+\dots+(-1)^i q_v^{i(i-1)/2}T_{v,i}X^{n-i}+\dots+ (-1)^n q_v^{n(n-1)/2}T_{v,n},\]
    where $T_{v,i}=[\GL_n(\O_{F_v})diag(\varpi_v,\dots,\varpi_v,1,\dots,1)\GL_n(\O_{F_v})]$ with $\varpi_v$ appearing $i$ times.
    
    Assume that $\rho_\m$ is a lifting of $\bar\rho_\m$ of type $\mathcal S$, where $\mathcal S=(\bar\rho_\m,S,\{\O\}_{v\in S},\{\mathcal D_v\}_{v\in S})$ is some global deformation problem. 
    Let $\rho:=\lambda\circ \rho_\m$, where $\lambda:T\to \O$ is induced from $\Lambda$ as in \Cref{lem: small lambda}.
    Then
    \begin{equation}\label{selmer-L}
        \#H^1_{\mathcal S}(\Ad\rho \otimes_\O E/\O) \ge \# (\O/L^{alg}(1,\pi,\Ad^\circ,\epsilon))
    \end{equation}
    where $\#$ denotes the order of a group.
\end{theorem}

\begin{proof}
    The key to proving the existence\footnote{If one makes extra assumptions on $F$ and $S$, then the existence of $\rho_\m$ follows immediately from \cite[theorem 2.3.7]{10_authors}.} of $\rho_\m$ is to use \cite[Theorem A]{HLTT} and Carayol's lemma.

    Since $\m$ is non-Eisenstein, $H^*_!(X_U,M_{\mu,\overline\Q_p})_\m=H^*_{cusp}(X_U,M_{\mu,\overline\Q_p})_\m$ by \Cref{lem: boundary vanishes}.
    Let $W=H^*_{cusp}(X_U,M_{\mu,\overline\Q_p})_\m$.
    By (the proof of) \Cref{lem: M ker lambda}\ref{decomp of W},
    \begin{equation}\label{decomp of W 2}
        W = \oplus_{\alpha: \T^S_{\overline\Q_p}(W) \to \overline\Q_p} W[\ker\alpha]
    \end{equation}
    where $\alpha$ runs over $\overline\Q_p$-algebra homomorphisms $\T^S_{\overline\Q_p}(W)\to\overline\Q_p$. Any such $\alpha$ satisfies $(\alpha|_{\T^S})^{-1}(\m_{\overline\Q_p})= \m$.
    By equation \eqref{decomp of H cusp} and \cite[Ch.II, Prop. 3.1]{BW_cts_cohomology}, 
    \[H^*_{cusp}(X_U,M_{\mu,\C})= \oplus_{\pi'} H^*(\mathfrak g,K_\infty^\circ , M_{\mu,\C}\otimes_\C \pi'_\infty)\otimes_\C (\pi'^\infty)^U,\]
    where $\pi'$ runs over all cohomological cuspidal automorphic representations of $\GL_n(\mathbb A_F)$ of weight $\mu$ with $\pi'^U\neq 0$.
    It follows that any $\alpha$ as above gives rise to a cohomological cuspidal automorphic representation $\pi_{\alpha}$ of $\GL_n(\mathbb A_F)$ of weight $\mu$ such that $\pi_{\alpha}^U\neq 0$ and $t\in\T^S$ acts on $\pi^U$ by $\alpha(t)$.

    We have a $\overline{\Q}_p$-algebra homomorphism
    \begin{align*}
        h:T\otimes_\O \overline\Q_p= \T^S_{\overline\Q_p}(W) &\to \prod_{\alpha:\T^S_{\overline\Q_p}(W) \to \overline\Q_p} \overline\Q_p\\
        t &\mapsto (\alpha(t))_{\alpha}. 
    \end{align*}
    By \eqref{decomp of W 2}, $h$ is injective.
    If $h$ is not surjective, then its image is a proper $\overline\Q_p$-subalgebra of $\prod_{\alpha}\overline\Q_p$, so, by a simple arguement,
    there exist $\beta,\gamma: \T^S_{\overline\Q_p}(W) \to \overline\Q_p$ such that $\beta\neq \gamma$ and the image is contained in $\{z\in\prod_\alpha \overline\Q_p: z_\beta = z_\gamma\}.$
    This is impossible by the strong multiplicity one theorem.
    Hence, $h$ is an isomorphism.

    By \cite[Theorem A]{HLTT}, for every $\alpha$, there is a unique continuous semisimple representation 
    \[ \rho_{\pi_\alpha}: \Gal(\overline F/F) \to \GL_n(\overline\Q_p)\]
    such that for all $v\not\in S$, $\rho_{\pi_\alpha}$ is unramified at $v$ and $\rho_{\pi_\alpha}(\Frob_v)$ has characteristic polynomial $\alpha(P_v(X))$, where $\alpha(P_v(X))$ is the polynomial obtained by applying $\alpha$ to every coefficient of $P_v(X):= X^n-T_{v,1}X^{n-1}+\dots+(-1)^i q_v^{i(i-1)/2}T_{v,i}X^{n-i}+\dots+ (-1)^n q_v^{n(n-1)/2}T_{v,n}$.
    By the Baire category theorem and the compactness of $\Gal(\overline F/F)$, after conjugating $\rho_{\pi_\alpha}$ by elements of $\GL_n(\overline\Q_p)$, we can assume that $\rho_{\pi_\alpha}(\Gal(\overline F/F)) \subset \GL_n(\O_L)$ for every $\alpha$, where $L$ is a finite extension of $E$. See \cite[Lemma 2.1.5]{CHT} for the proof of an analogous result.
    By the Brauer-Nesbitt theorem, every residual Galois representation $\overline\rho_{\pi_\alpha}: \Gal(\overline F/F) \to \GL_n(k_L)$, where $k_L$ is the residue field of $L$, is isomorphic to $\overline\rho_\pi$.
    We can conjugate $\rho_{\pi_\alpha}$ by elements of $\GL_n(\O_L)$ to ensure that all these residual Galois representations are equal to $\overline\rho_\pi$, not just conjugate to it.
    By enlarging $L$ if necessary, we can also assume $h(T)\subset \prod_\alpha \O_L$.

    Let
    \[\rho=(\rho_{\pi_\alpha}) : \Gal(\overline F/F) \to \GL_n(T\otimes_\O \overline\Q_p) = \prod_\alpha \GL_n(\overline\Q_p).\]
    For every $v\not\in S$, $\rho$ is unramified at $v$ and $\rho(\Frob_v)$ has characteristic polynomial $P_v(X).$

    Let $A:=\{(x_\alpha)\in\prod_\alpha \O_L: \bar x_\alpha \text{ are all equal and lies in }\O/\varpi\}$, where $\bar x_\alpha$ is the reduction mod $\m_L$ of $x$.
    This is a local ring with maxial ideal $\m_A:=\{(x_\alpha)\in\prod_\alpha \O_L: \bar x_\alpha =0 \text{ for all }\alpha\}$ and residue field $\O/\varpi$.
    This is also finitely generated as an $\O$-module, so it is a complete Noetherian ring.\footnote{It is clear that $A$ is $\varpi$-adically complete. By the going up and incomparability theorem, $\sqrt{\varpi A} = \cap_{\p\subset \varpi A}\p = \m_A$. It follows that $A$ is also $\m_A$-adically complete.}
    Thus, $A\in CNL_\O$ (the category of complete Noetherian local $\O$-algebra with residue field $\O/\varpi$).

    In the remaining part, let us denote the image of $\m$ in $T$ also as $\m$. 
    Note that $T\subset A$ and $T\in CNL_\O$.
    Also, $T\cap \m_A = \m$ by, so $T\hookrightarrow A$ is continuous.
    This is a closed map of topological spaces, because $T$ is compact\footnote{To see this, note that $T=\lim_i T/m^i$ is profinite.} and $A$ is Hausdorff.
    In particular, $T$ is a closed subset of $A$.

    Note that $\rho(\Gal(\overline F/F)) \subset \GL_n(A)$, so we can view $\rho$ as a representation
    \[\rho : \Gal(\overline F/F) \to \GL_n(A).\]
    The residual Galois representation $\bar\rho:\Gal(\overline F/F) \to \GL_n(A/\m_A)$ is absolutely irreducible because $\m$ is non-Eisenstein.
    We know that $\mathrm{tr}(\rho(\Frob_v))= \pm T_{v,1} \in T$ for all $v\not\in S$.
    By Chebotarev density theorem and the fact that $T\subset A$ is a closed subset, $\mathrm{tr}\rho(\Gal(\overline F/F)) \subset T$.
    By Carayol's lemma (\cite[Lemma 2.1.10]{CHT}), there exists $g\in \GL_n(A) \to \GL_n(\O/\varpi)$ such that $g \rho(\Gal(\overline F/F)) g^{-1} \subset \GL_n(T)$.
    Then $g\rho g^{-1}$ is unramified outside $S$, so it gives rise to
    \[\rho_\m:G_{F,S} \to \GL_n(T)\]
    such that for all $v\notin S$ of $F$, the characteristic polynomial of $\rho_\m(\Frob_v)$ is $P_v(X)$. This proves the first part.

    For the second part, assume that $\rho_\m$ is of type $\mathcal S$. Its strict equivalence class induces an $\O$-algebra homomorphism $f:R_{\mathcal S}\to T$.
    We know that $\T^S$ is generated by 
    $$\{T^i_v,(T^n_v)^{-1}: v\notin S, 1\le i\le n\}$$ as an $\O$-algebra\footnote{This can be easily deduced by applying the Satake isomorphism to $\H(\GL_n(F_v),\GL_n(\O_{F_v}))\otimes_\Z \Z[q_v^{1/2}]$.}.
    For all $g\in G_{F,S}$, every coefficient of the characteristic polynomial of $\rho_\m(g)$ is in the image of $R_{\mathcal S}\to T$.
    Taking $\Frob_v^{\pm 1}$, we know $f$ is surjective.

    If $H^1_{\mathcal S}(\Ad\rho \otimes_\O E/\O)$ is infinite, then equation \eqref{selmer-L} is trivial.
    Suppose $H^1_{\mathcal S}(\Ad\rho \otimes_\O E/\O)$ is finite.
    Let $\theta=\lambda\circ f$ and $\p:=\ker\theta$.
    It follows from \Cref{lem:tangent-Selmer} that $\p/\p^2$ is also finite and 
    \[\#H^1_{\mathcal S}(\Ad\rho \otimes_\O E/\O) = \#(\p/\p^2).\]
    By \cite[page 141 equation (5.2.3)]{DDT},
    \[\#(\p/\p^2) \ge \#(\O/\eta_{R_{\mathcal S}})\]
    where $\eta_{R_{\mathcal S}}:=\theta(Ann_{R_{\mathcal S}}(\p))$.
    By \cite[page 140 equation (5.2.2)]{DDT}, 
    $$\#(\O/\eta_{R_{\mathcal S}}) \ge \#(\O/\eta_T),$$ where $\eta_T=\lambda(Ann_T(\ker\lambda))$, which is the same as $\eta_\pi$ in the previous subsections.
    By \Cref{lem: M lambda rk 1}, \Cref{lem: boundary vanishes}, and \Cref{thm: main}, 
    $$\#(\O/\eta_\pi) \ge \#(\O/\eta_{\pi,b,\epsilon}) \ge \# (\O/L^{alg}(1,\pi,\Ad^\circ,\epsilon)).$$
    We get the desired inequality.
\end{proof}

As an illustration of \Cref{thm: selmer}, let us give an example.
Let $\mathcal D_v^\Box$ be the functor on $CNL_\O$ (category of complete Noetherian local $\O$-algebras with residue fields $k$) that sends $A$ to the set of all lifts of $\bar\rho_\m|_{G_{F_v}}$ to $A$.
In the Fontaine-Laffaille case, if $v$ is a $p$-adic place, we let $\mathcal D_v^{FL}$ be the local deformation problem that sends any $A\in CNL_\O$ that is finite over $\O$ to all liftings of $\bar\rho_\m|_{G_{F_v}}$ to $A$ that are Fontaine-Laffaille of type $(\mu_\tau)_{\tau\in Hom(F_v,E)}$. See \cite[sections 4.1, 6.2.14]{10_authors} for more detail.
\begin{corollary}\label{coro: FL}
    Let the assumptions be as at the start of \Cref{sec:4}.
    Suppose in addition that 
    \begin{itemize}
        \item $F$ is a CM or totally real number field.
        \item $S$ contains $\{\text{places }w: w|_{\Q} \text{ is ramified in $F$ or }w|_{\Q}=p\}$ (and all $v$ such that $\pi_v$ is ramified).
        \item $U_v=\GL_n(\O_{F_v})$ for all $v\mid p$.
        \item the residual Galois representation $\bar\rho_\pi: \Gal(\overline F/F) \to \GL_n(\overline F_p)$ is irreducible and decomposed generic \cite[Definition 4.3.1]{10_authors}.
    \end{itemize}
    Then for all $v\mid p$, $\rho_m|_{G_{F_v}}$ is Fontaine-Laffaille of type $(\mu_\tau)_{\tau\in Hom(F_v,E)}$, where $\rho_\m$ is the Galois representation constructed in \Cref{thm: selmer}. In particular, $\rho_m$ is a lifting of $\bar\rho_\m$ of type $\mathcal S=(\bar\rho_\m,S,\{\O\}_{v\in S}, \{\mathcal D_v^{FL}\}_{v\mid p}\cup\{\mathcal D_v^\Box\}_{v\in S-\{v\mid p\}})$ and $$\#H^1_{\mathcal S}(\Ad\rho \otimes_\O E/\O) \ge \# (\O/L^{alg}(1,\pi,\Ad^\circ)).$$
\end{corollary}

\begin{proof}
    This follows from the proof of \Cref{thm: selmer} and by replacing the use of \cite[Theorem A]{HLTT} by \cite[Theorem 4.3.1]{Caraiani-Newton}.
\end{proof}

\begin{remark}
    To relate this Selmer group to the Bloch-Kato Selmer group, see \cite[lemma 2.1]{DFG} and \cite[section 7.3]{Dimitrov}. In their setup, the $\O$-Fitting ideal of their Selmer group was equal to that of the Bloch-Kato one multiplied by $\prod_{v\in \Sigma}Fitt_\O (H^1_f(F_v,(\Ad^\circ\rho_f)^*(1)))$ for some finite set of places $\Sigma$. Each term in the product was then shown to be equal to the local Tamagawa number divided by the local $L$-factor at that place. Combining this with a suitable $R=T$ theorem, they were able to deduce a form of the Bloch-Kato conjecture using similar argument to the proof of \Cref{thm: selmer}.
\end{remark}

\section*{Acknowledgement}
I would like to thank my supervisor Tobias Berger for his help with this project, James Newton and Robert Kurinczuk for lots of constructive feedback which greatly enhances the quality of this paper, Anantharam Raghuram for sketching the proof of \Cref{lem:T acts semisimply} part \ref{isotypic} in an email, Haluk Sengun for suggesting nice references for learning about cohomology of arithmetic groups, and an anonymous referee for giving many useful comments on the previous version of this paper.
This work was completed while in receipt of the EPSRC Doctoral Training Partnership (DTP) Studentship with grant number EP/W524360/1.

\bibliographystyle{amsalpha}
\bibliography{reference}
\end{document}